\documentclass[12pt, reqno]{amsart}
\makeatletter
\@namedef{subjclassname@1991}{$\mathrm{1991}$ Mathematics Subject Classification}
\@namedef{subjclassname@2000}{$\mathrm{2000}$ Mathematics Subject Classification}
\@namedef{subjclassname@2010}{$\mathrm{2010}$ Mathematics Subject Classification}
\@namedef{subjclassname@2020}{$\mathrm{2020}$ Mathematics Subject Classification}
\makeatother
\usepackage{amsmath,amsthm, amscd, amsfonts, amssymb, graphicx, color}
\usepackage[bookmarksnumbered, colorlinks, plainpages,linkcolor=blue,urlcolor=blue,citecolor=blue]{hyperref}
\textwidth 14 cm \textheight 19 cm
\oddsidemargin 1.5cm \evensidemargin 1.5cm
\setcounter{page}{1}


\newtheorem{thm}{Theorem}[section]
\newtheorem{cor}[thm]{Corollary}

\newtheorem{defn}[thm]{Definition}

\newtheorem{exam}[thm]{Example}
\numberwithin{equation}{section}


\begin{document}

\title{the $m$-generalized right group inverses in Banach algebras}

\author{Huanyin Chen}
\author{Marjan Sheibani}
\address{School of Big Data, Fuzhou University of International Studies and Trade, Fuzhou 350202, China}
\email{<huanyinchenfz@163.com>}
\address{Farzanegan Campus, Semnan University, Semnan, Iran}
\email{<m.sheibani@semnan.ac.ir>}

\subjclass[2020]{16U50, 15A09, 16W10.}\keywords{right group inverse; $m$-generalized right group inverse; generalized right Drazin inverse; Banach *-algebra.}

\begin{abstract} In this paper, we introduce the concept of the $m$-generalized right group inverse. This serves as a natural extension of both the $m$-weak group inverse and the generalized group inverse. This is the first time to study the one-side version of weak group inverse. We characterize this new generalized inverse using the $m$-generalized right group decomposition and a polar-like property. Additionally, we present the representation of the $m$-generalized right group inverse using the generalized right core inverse, leading to new insights and properties for both the $m$-weak group inverse and the generalized group inverse.\end{abstract}

\maketitle

\section{Introduction}

A Banach *-algebra is an algebra $\mathcal{A}$ that is also a Banach space with a norm $||\cdot ||$, and has an involution
$*$ satisfying: $(x+y)^*=x^*+y^*, (\lambda x)^*=\overline{\lambda} x^*, (xy)^*=y^*x^*$ and $(x^*)^*=x$ for all $x,y\in \mathcal{A}$.
The involution $*$ is proper if $x^*x=0\Longrightarrow x=0$ for any $x\in \mathcal{A}$. Every $C^*$-algebra is a Banach *-algebra with a proper involution.
In particular, the algebra ${\Bbb C}^{n\times n}$ consisting of all $n\times n$ complex matrices is a Banach *-algebra, where the proper involution is given by the conjugate transpose. Throughout the paper, $\mathcal{A}$ denotes a Banach *-algebra with a proper involution.

An element $a\in \mathcal{A}$ has a group inverse if there exists an
$x\in \mathcal{A}$ such that $$xa^2=a, ax^2=x, ax=xa.$$
Such an $x$ , if it exists, is unique and is denoted by
$a^{\#}$, termed the group inverse of $a$. A square complex matrix $A$ has a group inverse if $rank(A)=rank(A^2)$.
The group inverse is widely used in matrix and operator theory (see~\cite{C3,M}).

The concept of the weak group inverse is a significant generalization of the group inverse. An element $a\in \mathcal{A}$ has a weak group inverse there exist $x\in R$ and $n\in \Bbb{N}$ such that $$ax^2=x, (a^*a^2x)^*=a^*a^2x, xa^{n+1}=a^n.$$ If such $x$ exists, it is unique, and denote it by $a^{\tiny\textcircled{W}}$. Evidently, a square complex matrix $A$ possesses a group inverse if and only if the system of equations $$AX^2=X, AX=A^{\tiny\textcircled{\dag}}A$$ is solvable. Here, $A^{\tiny\textcircled{\dag}}$ stands for the core-EP inverse of $A$ (see~\cite{GC,M0}).
The weak group inverse extends the applicability of group inverses to broader classes of matrices or operators. It has found applications in solving singular systems, perturbation analysis, and constrained optimization (see~\cite{CY,D,L,M1,W,YW,Z2,Z3}).

The $m$-weak group inverse is a generalization of the weak group inverse and is useful in various applications, e.g. ~\cite{M2,M3}.
An element $a\in \mathcal{A}$ has $m$-weak group inverse if there exist $x\in \mathcal{A}$ and $k\in {\Bbb N}$ such that $$ax^2=x, xa^{k+1}=a^k~\mbox{and} ~(a^k)^*a^{m+1}x=(a^k)^*a^m.$$ The preceding $x$ is called the $m$-weak group inverse of $a$, and denoted by $a^{\tiny\textcircled{W}_m}$. Evidently, a square complex matrix $A$ admits an $m$-weak group inverse $X$, provided it satisfies the following system of equations: $$AX^2=X, AX=(A^{\tiny\textcircled{\dag}})^mA^m $$ (see~\cite{G,J,L,M5,Z4}).

Following Chen, an element $a\in \mathcal{A}$ has $m$-generalized group inverse if there exists $x\in \mathcal{A}$ such that $$ax^2=x, ((a^m)^*a^{m+1}x)^*=(a^m)^*a^{m+1}x, \lim\limits_{n\to \infty}||a^n-xa^{n+1}||^{\frac{1}{n}}=0.$$ Such $x$ is unique if exists, denoted by $a^{\tiny\textcircled{g}}.$ Many properties of $m$-generalized group inverse were investigated in ~\cite{C1}. For a complex matrix, the
$m$-generalized group inverse and $m$-weak group inverse coincide. However, the
$m$-generalized group inverse extends the applicability of $m$-weak group inverses to broader classes of matrices or operators,
including those over infinite-dimensional Hilbert spaces (see~\cite{M4}).

Following Yan (see~\cite{Y}), an element $a\in \mathcal{A}$ has right group inverse provided that there exists $x\in \mathcal{A}$ such that $$ax^2=x, a^2x=axa=a.$$ We use $\mathcal{A}_r^{\#}$ to denote the set of all right group invertible elements in $\mathcal{A}$.

An $a\in \mathcal{A}$ has generalized right Drazin inverse if there exists $x\in \mathcal{A}$ such that $$ax^2=x, a^2x=axa, a-axa\in \mathcal{A}^{qnil}.$$ Here, $$\mathcal{A}^{qnil}=\{x\in \mathcal{A}~\mid~ \lim\limits_{n\to \infty}\parallel x^n\parallel^{\frac{1}{n}}=0\}.$$ As is well known, $x\in \mathcal{A}^{qnil}$ if and only if $1+\lambda x\in \mathcal{A}$ is invertible. We use $\mathcal{A}_r^{d}$ to denote the set of all generalized right Drazin invertible elements in $\mathcal{A}$. Many characterizations of generalized right (left) Drazin inverse are established in ~\cite{R,Y}.

Many matrices do not admit a weak group inverse. To address this limitation, the generalized right group inverse was introduced as a relaxation that retains some desirable properties of the weakly group inverse while being applicable to a broader class of matrices. We refer the reader to~\cite{CM2} for more properties of such kind of generalized inverses.

The motivation of this paper is to introduce and study a new type of generalized inverse, serving as a natural extension of the $m$-weak group inverse, with the aim of applying it to infinite-dimensional bounded linear operators on Hilbert spaces. This is the first time to study the one-side version of the weak group inverse.

\begin{defn} An element $a\in \mathcal{A}$ has an $m$-generalized right group inverse if $a\in \mathcal{A}_r^d$ and there exists $x\in \mathcal{A}$ such that $$x=ax^2, (aa_r^d)^*a^{m+1}x=(aa_r^d)^*a^m, \lim\limits_{n\to \infty}||a^n-axa^n||^{\frac{1}{n}}=0.$$ The preceding $x$ is called an $m$-generalized right group inverse of $a$, and denoted by $a_r^{\tiny\textcircled{g}_m}$.\end{defn}

Section 2 establishes the fundamental properties of this new generalized inverse, revealing several new characteristics of the $m$-weak group inverse.
We prove that $a\in \mathcal{A}_r^{\tiny\textcircled{g}_m}$ if and only if $a^m\in \mathcal{A}_r^{\tiny\textcircled{g}}$ if and only if
$a\in \mathcal{A}_r^d$ and $(aa_r^d)^*aa_r^dx=(aa_r^d)^*a^m$ for some $x\in \mathcal{A}$.

In Section 3, we prove that $a\in \mathcal{A}_r^{\tiny\textcircled{g}_m}$ if and only if there exists $x\in \mathcal{A}$ such that
$$x=ax^2, [(a^m)^*a^{m+1}x]^*=(a^m)^*a^{m+1}x, \lim\limits_{n\to \infty}||a^n-axa^{n}||^{\frac{1}{n}}=0,$$
if and only if $a$ has an $m$-generalized group decomposition, i.e., there exist $x,y\in \mathcal{A}$ such that $$a=x+y, x^*a^{m-1}y=yx=0, x\in \mathcal{A}_r^{\#}, y\in \mathcal{A}^{qnil}.$$ We present the polar-like property for $m$-generalized right group inverse. We prove that
$a\in \mathcal{A}_r^{\tiny\textcircled{g}_m}$ if and only if there exists an idempotent $p\in \mathcal{A}$ such that
$$\begin{array}{c}
(1-p)a(1-p)\in [(1-p)\mathcal{A}(1-p)]_r^{-1}, \big((a^m)^*a^{m}p\big)^*=(a^m)^*a^{m}p,\\
ap\in \mathcal{A}^{qnil}, (1-p)\mathcal{A}=a(1-p)\mathcal{A},
\end{array}$$ if and only if there exists $b\in \mathcal{A}$ such that $$bab=b, a^2b^2=ab, [(a^m)^*a^{m+1}b]^*=(a^m)^*a^{m+1}b, ab\mathcal{A}=a^2b\mathcal{A}, a-a^2b\in \mathcal{A}^{qnil}.$$ This yields new characterizations of the $m$-weak group inverse.

 An element $a\in \mathcal{A}$ has generalized right core inverse if there exists $x\in \mathcal{A}$ such that $$ax^2=x, (ax)^*=ax, \lim_{n\to
\infty}||a^n-axa^{n+1}||^{\frac{1}{n}}=0.$$ If such a $x$ exists, we denote it by $a_r^{\tiny\textcircled{d}}$ (see~\cite{CM2}).

Finally, in Section 4, we investigate the representations of $m$-generalized right group inverses using the generalized right core inverse. We prove that every element with a generalized right core inverse possesses an $m$-generalized right group inverse and
$$a_r^{\tiny\textcircled{g}_m}=(a_r^d)^{m+1}aa_r^{\tiny\textcircled{d}}a^{m}=(a_r^daa_r^{\tiny\textcircled{d}})^{m+1}a^m.$$ Applications in solving matrix equations are thus obtained.

Throughout the paper, $\mathcal{A}_r^{\#}, \mathcal{A}_r^{d}, \mathcal{A}_r^{\tiny\textcircled{d}}, \mathcal{A}_r^{\tiny\textcircled{g}}$, $\mathcal{A}^{\tiny\textcircled{W}_m}$ and $\mathcal{A}_r^{\tiny\textcircled{g}_m}$ denote the sets of all right group invertible, generalized right Drazin invertible, generalized right core invertible, generalized right group invertible, $m$-weak group invertible and  and $m$-generalized right group invertible elements in $\mathcal{A}$, respectively. ${\Bbb N}$ is the set of all natural numbers.

\section{$m$-generalized right group inverse}

In this section, we explore the fundamental properties of the $m$-generalized right group inverse of elements in a Banach *-algebra. We start with

\begin{thm} Let $a\in \mathcal{A}$. Then $a\in \mathcal{A}_r^{\tiny\textcircled{g}_m}$ if and only if $a^m\in \mathcal{A}_r^{\tiny\textcircled{g}}$. In this case, $a_r^{\tiny\textcircled{g}_m}=a^{m-1}(a^m)_r^{\tiny\textcircled{g}}$ and
$(a^m)_r^{\tiny\textcircled{g}}=(a_r^{\tiny\textcircled{g}_m})^m$.\end{thm}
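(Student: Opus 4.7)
My plan is to argue both directions from a common preliminary. If $a\in\mathcal{A}_r^d$, the element $e:=aa_r^d$ is idempotent and commutes with $a$, and since $ea_r^d=a_r^d$ a short induction gives $a^k(a_r^d)^k=e$ for all $k\geq 1$. One checks directly from this that $(a_r^d)^m$ serves as the generalized right Drazin inverse of $a^m$, so $(a^m)_r^d=(a_r^d)^m$ and $a^m(a^m)_r^d=e$; the reverse implication $a^m\in\mathcal{A}_r^d\Rightarrow a\in\mathcal{A}_r^d$ is a standard power reduction from \cite{R,Y}. Thus the ``associated idempotent'' entering the $*$-condition is the same $e$ for $a$ and for $a^m$.

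For the forward direction, I would set $z=x^m$, where $x=a_r^{\tiny\textcircled{g}_m}$. Iterating $x=ax^2$ yields $a^kx^{k+j}=x^j$ for all $j,k\geq 1$; in particular $a^mx^{2m}=x^m$ and $a^mx^m=ax$. The identity $z=a^mz^2$ is then immediate; the $*$-condition rewrites as $e^*a^{2m}z=e^*a^m\cdot a^mx^m=e^*a^{m+1}x=e^*a^m$ by the hypothesis on $x$; and the limit condition for $a^m$ follows from the limit condition for $x$ along the subsequence $n\mapsto mn$, using $a^mza^{mn}=axa^{mn}$ and the elementary bound $||\cdot||^{1/n}=(||\cdot||^{1/(mn)})^m$. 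Hence $z=x^m$ fulfils the three defining conditions and is the generalized right group inverse of $a^m$.

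For the backward direction, given $z=(a^m)_r^{\tiny\textcircled{g}}$, I would define $x=a^{m-1}z$. Conditions (2) and (3) of the $m$-generalized right group inverse definition are direct: $e^*a^{m+1}x=e^*a^{2m}z=e^*a^m$, while writing $n=mk+r$ with $0\leq r<m$ gives $||(1-a^mz)a^n||\leq||a||^{m-1}||(1-a^mz)a^{mk}||$, so that $axa^n=a^mza^n$ reduces the limit on $a$ to the given limit on $a^m$. The genuinely non-trivial step, and the main obstacle, is condition (1), namely $ax^2=x$, which unwinds to $(1-a^mz)a^{m-1}z=0$.

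To prove this last identity I would iterate $z=a^mz^2$ to obtain $z=a^{km}z^{k+1}$ for every $k\geq 1$; multiplying by $a^{m-1}$ on the left gives the key identity $a^{m-1}z=a^{jm-1}z^j$ for every $j\geq 1$, and hence
\[
||(1-a^mz)a^{m-1}z||\leq||(1-a^mz)a^{(j-1)m}||\cdot||a||^{m-1}\cdot||z^j||.
\]
The given limit $\lim_n||(1-a^mz)a^{mn}||^{1/n}=0$ supplies $||(1-a^mz)a^{(j-1)m}||<\epsilon^{j-1}$ eventually for any preassigned $\epsilon>0$, while Gelfand's formula gives $||z^j||\leq(\rho(z)+\delta)^j$ eventually for any $\delta>0$; choosing $\epsilon$ so that $\epsilon(\rho(z)+\delta)<1$ drives the right-hand side to $0$ as $j\to\infty$, while the left-hand side does not depend on $j$, so it vanishes. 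Consequently $ax^2=x$, so $x=a^{m-1}z$ is an $m$-generalized right group inverse of $a$. The two displayed formulas $a_r^{\tiny\textcircled{g}_m}=a^{m-1}(a^m)_r^{\tiny\textcircled{g}}$ and $(a^m)_r^{\tiny\textcircled{g}}=(a_r^{\tiny\textcircled{g}_m})^m$ then fall out from the two constructions, the uniqueness of the generalized right group inverse making the two assignments consistent.
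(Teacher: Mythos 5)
Your proposal is correct and follows the same route as the paper's proof: the forward direction via $z=x^m$ and the backward direction via $x=a^{m-1}z$, with the same verification of the $*$-condition (using $a^m(a^m)_r^d=aa_r^d$) and of the limit condition along the subsequence $mn$. The one place you genuinely diverge is the verification of $ax^2=x$ in the backward direction: the paper disposes of it in a single chain of equalities whose middle step, $a^mya^{m-1}\cdot a^my^2=a^{m-1}\cdot a^my^2$, i.e. $(1-a^my)a^{m-1}y=0$, is asserted without justification, whereas you prove exactly this identity by iterating $z=a^mz^2$ to get $a^{m-1}z=a^{jm-1}z^j$ and then combining the hypothesis $\lim_{n\to\infty}||(1-a^mz)a^{mn}||^{\frac{1}{n}}=0$ with Gelfand's formula for $||z^j||^{\frac{1}{j}}$. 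This is the same norm-root technique the paper itself deploys later (e.g. to obtain $(a^2-aza^2)z=0$ in Theorem 3.3), and your version closes a real gap in the printed argument; your treatment of the limit condition for arbitrary $n$ (rather than only $n\in m{\Bbb N}$) via division with remainder is likewise slightly more complete than the paper's.
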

\begin{proof} $\Longrightarrow$ Since $a\in \mathcal{A}_r^{\tiny\textcircled{g}_m}$, then $a\in \mathcal{A}_r^d$ and there exists $x\in \mathcal{A}$ such that $$x=ax^2, (aa_r^d)^*a^{m+1}x=(aa_r^d)^*a^m, \lim\limits_{n\to \infty}||a^n-axa^n||^{\frac{1}{n}}=0.$$
Let $y=x^m$. Then we verify that
$$\begin{array}{rll}
a^my^2&=&a^mx^{2m} =(ax)x^m=(ax^2)x^{m-1}=x^m=y,\\
(a^m(a^m)_r^d)^*(a^m)^2y&=&(a^m(a^m)_r^d)^*a^m(a^mx^m)=(a^m(a^m)_r^d)^*a^{m+1}x\\
&=&(aa_r^da^m(a^m)_r^d)^*a^{m+1}x=(a^m(a^m)_r^d)^*[(aa_r^d)^*a^{m+1}x]\\
&=&(a^m(a^m)_r^d)^*[(aa_r^d)^*a^{m}]=(aa_r^da^m(a^m)_r^d)^*a^{m}=(a^m(a^m)_r^d)^*a^m,\\
\end{array}$$
Moreover, we have
$$\begin{array}{rl}
&(a^m)^n-(a^m)y(a^m)^{n}=(a^m)^n-(a^m)x^m(a^m)^{n}\\
=&(a^m)^n-ax(a^m)^{n}\\
=&a^{mn}-axa^{mn}.
\end{array}$$ Hence, we have
$$\begin{array}{rl}
&||(a^m)^n-(a^m)y(a^m)^{n}||^{\frac{1}{n}}\\
\leq &\big(||a^{mn}-axa^{mn}||^{\frac{1}{mn}}\big)^m.
\end{array}$$
Therefore $$\lim\limits_{n\to \infty}||(a^m)^n-(a^m)y(a^m)^{n}||^{\frac{1}{n}}=0.$$ By virtue of~\cite[Theorem 2.6]{CM}, $a^m\in \mathcal{A}_r^{\tiny\textcircled{g}}$. Then
$(a^m)_r^{\tiny\textcircled{g}}=y=x^m$, as required.

$\Longleftarrow $ Let $y=(a^m)_r^{\tiny\textcircled{g}}$. Then $a^m\in \mathcal{A}_r^d$ and we have
$$\begin{array}{rll}
a^my^2&=&y,\\
((a^m)_r^d)^*(a^m)^2y&=&((a^m)_r^d)^*a^m,\\
\lim\limits_{n\to \infty}||(a^m)^n-(a^m)y(a^m)^{n}||^{\frac{1}{n}}&=&0.
\end{array}$$ Take $x=a^{m-1}y$. Then we verify that
$$\begin{array}{rll}
ax^2&=&a[a^{m-1}ya^{m-1}]y=a^mya^{m-1}a^m[(a^m)_r^{\tiny\textcircled{g}}]^2\\
&=&a^ma^{m-1}y^2=a^{m-1}y=x,\\
(aa_r^d)^*a^{m+1}x&=&(aa_r^d)^*a^{2m}y=(a^{m+1})^*((a^m)_r^d)^*a^{2m}y\\
&=&(a^{m+1})^*((a^m)_r^d)^*a^m=(aa_r^d)^*a^m.
\end{array}$$
Obviously, we have
$$a^{mn}-axa^{mn}=a^{mn}-a[a^{m-1}y]a^{mn}=(a^{m})^n-a^{m}y(a^{m})^n.$$
Hence,
$$\begin{array}{rl}
&||a^{mn}-axa^{mn}||^{\frac{1}{mn}}\\
\leq &\big(||(a^{m})^n-a^{m}y(a^{m})^n||^{\frac{1}{n}}\big)^{\frac{1}{m}}.
\end{array}$$ Thus, we have
$$\lim\limits_{n\to \infty}||a^{mn}-axa^{mn}||^{\frac{1}{mn}}=0.$$ We directly check that $a\in \mathcal{A}_r^d$ and $a_r^d=a^{m-1}(a^m)_rZ^d$.
Accordingly, $a_r^{\tiny\textcircled{g}_m}=x=a^{m-1}(a^m)_r^{\tiny\textcircled{g}}$, as required.\end{proof}

\begin{cor} Let $a\in \mathcal{A}_r^{\tiny\textcircled{g}_m}$. Then $(a^m)_r^{\tiny\textcircled{g}}=(a_r^{\tiny\textcircled{g}_m})^m.$\end{cor}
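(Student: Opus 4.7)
The plan is to read off the corollary directly from the forward direction of Theorem 2.1. Set $x = a_r^{\tiny\textcircled{g}_m}$. In the proof of Theorem 2.1 ($\Longrightarrow$), the element $y := x^m$ was explicitly constructed and shown to satisfy
\[
a^m y^2 = y,\qquad \bigl(a^m(a^m)_r^d\bigr)^* (a^m)^2 y = \bigl(a^m(a^m)_r^d\bigr)^* a^m,\qquad \lim_{n\to\infty}\|(a^m)^n - (a^m) y (a^m)^n\|^{1/n}=0,
\]
together with $a^m \in \mathcal{A}_r^d$ (inherited from $a \in \mathcal{A}_r^d$). Thus $y = x^m$ witnesses $a^m \in \mathcal{A}_r^{\tiny\textcircled{g}}$.

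To close the argument I will invoke the uniqueness of the generalized right group inverse, which is part of the characterization given by~\cite[Theorem 2.6]{CM} cited in the proof of Theorem 2.1. Uniqueness forces $(a^m)_r^{\tiny\textcircled{g}} = y = x^m = (a_r^{\tiny\textcircled{g}_m})^m$, which is the desired identity.

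I do not anticipate any real obstacle: the statement of Corollary 2.2 is essentially the second half of the conclusion of Theorem 2.1, isolated for convenient later reference. The only care needed is to point out that we are extracting the explicit witness $y = x^m$ from the proof of Theorem 2.1, rather than quoting its bare statement, and then appealing to uniqueness to identify it with $(a^m)_r^{\tiny\textcircled{g}}$.
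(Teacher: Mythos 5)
Your proposal is correct and matches the paper's own argument, which simply reads the identity $(a^m)_r^{\tiny\textcircled{g}}=(a_r^{\tiny\textcircled{g}_m})^m$ off the explicit witness $y=x^m$ constructed in the forward direction of the proof of Theorem 2.1. The appeal to uniqueness of the generalized right group inverse to identify $y$ with $(a^m)_r^{\tiny\textcircled{g}}$ is exactly what the paper does implicitly.
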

\begin{proof} This is obvious by the proof of Theorem 2.1.\end{proof}

\begin{cor} Let $a,b\in \mathcal{A}_r^{\tiny\textcircled{g}_m}$. If $ab=ba=a^*b=0$, then $a+b\in \mathcal{A}_r^{\tiny\textcircled{g}_m}$. In this case,
$$(a+b)_r^{\tiny\textcircled{g}_m}=a_r^{\tiny\textcircled{g}_m}+b_r^{\tiny\textcircled{g}_m}.$$\end{cor}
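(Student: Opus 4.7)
The plan is to reduce the claim to the analogous additive formula for the generalized right group inverse via Theorem 2.1. The key algebraic observations are the collapse $(a+b)^k=a^k+b^k$ for every $k\geq 1$ under $ab=ba=0$ (so in particular $a^mb^m=b^ma^m=0$), together with $(a^m)^*b^m=(a^*)^{m-1}(a^*b)b^{m-1}=0$ coming from the additional hypothesis $a^*b=0$.

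First I would invoke Theorem 2.1 to pass from $a,b\in\mathcal{A}_r^{\tiny\textcircled{g}_m}$ to $a^m,b^m\in\mathcal{A}_r^{\tiny\textcircled{g}}$, with $(a^m)_r^{\tiny\textcircled{g}}=(a_r^{\tiny\textcircled{g}_m})^m$ and likewise for $b$. Combined with the orthogonality relations just derived, the corresponding additive result for the generalized right group inverse from~\cite{CM2} should yield
$$(a+b)^m=a^m+b^m\in\mathcal{A}_r^{\tiny\textcircled{g}},\qquad \bigl((a+b)^m\bigr)_r^{\tiny\textcircled{g}}=(a^m)_r^{\tiny\textcircled{g}}+(b^m)_r^{\tiny\textcircled{g}}.$$
Applying the converse direction of Theorem 2.1 then gives $a+b\in\mathcal{A}_r^{\tiny\textcircled{g}_m}$ together with
$$(a+b)_r^{\tiny\textcircled{g}_m}=(a+b)^{m-1}\bigl((a+b)^m\bigr)_r^{\tiny\textcircled{g}}=\bigl(a^{m-1}+b^{m-1}\bigr)\bigl((a^m)_r^{\tiny\textcircled{g}}+(b^m)_r^{\tiny\textcircled{g}}\bigr).$$

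The final step is to eliminate the cross terms. Since $(a^m)_r^{\tiny\textcircled{g}}\in a^m\mathcal{A}$ and $(b^m)_r^{\tiny\textcircled{g}}\in b^m\mathcal{A}$ (by the identity $z=(\cdot)z^2$ built into the definition), the mixed products $b^{m-1}(a^m)_r^{\tiny\textcircled{g}}$ and $a^{m-1}(b^m)_r^{\tiny\textcircled{g}}$ carry factors $b^{m-1}a^m$ and $a^{m-1}b^m$, both of which vanish for $m\geq 2$ by $ab=ba=0$. The surviving diagonal terms reassemble, again via Theorem 2.1, as $a^{m-1}(a^m)_r^{\tiny\textcircled{g}}+b^{m-1}(b^m)_r^{\tiny\textcircled{g}}=a_r^{\tiny\textcircled{g}_m}+b_r^{\tiny\textcircled{g}_m}$. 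The degenerate case $m=1$ reduces directly to the cited additive identity applied to $a$ and $b$.

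The main obstacle I anticipate is securing the additive identity for the generalized right group inverse under $uv=vu=u^*v=0$: verifying its three defining conditions, in particular the Hermitian condition and the asymptotic one, for $u_r^{\tiny\textcircled{g}}+v_r^{\tiny\textcircled{g}}$ against $u+v$ is where the real algebraic work lies. Once that ingredient is taken from~\cite{CM2}, the rest is pure bookkeeping via Theorem 2.1 and the collapse $(a+b)^k=a^k+b^k$.
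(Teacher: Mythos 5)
Your proposal is correct and follows essentially the same route as the paper: pass to $a^m,b^m\in\mathcal{A}_r^{\tiny\textcircled{g}}$ via Theorem 2.1, check the orthogonality relations $(a^m)(b^m)=(b^m)(a^m)=(a^m)^*(b^m)=0$, invoke the cited additive result for the generalized right group inverse, and translate back with Theorem 2.1. You even supply details the paper leaves implicit, such as why the cross terms $a^{m-1}(b^m)_r^{\tiny\textcircled{g}}$ and $b^{m-1}(a^m)_r^{\tiny\textcircled{g}}$ vanish.
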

\begin{proof} By virtue of Theorem 2.1, $a^m,b^m\in \mathcal{A}_r^{\tiny\textcircled{g}}$ and
$$a_r^{\tiny\textcircled{g}_m}=x=a^{m-1}(a^m)_r^{\tiny\textcircled{g}}, b_r^{\tiny\textcircled{g}_m}=b^{m-1}(b^m)_r^{\tiny\textcircled{g}}.$$
By hypothesis, we see that $(a^m)(b^m)=(b^m)(a^m)=(a^m)^*(b^m)=0$. According to~\cite[Corollary 2.8]{CM}, $(a+b)^m=a^m+b^m\in \mathcal{A}_r^{\tiny\textcircled{g}}$.
By virtue of Theorem 2.1 again, $a+b\in \mathcal{A}_r^{\tiny\textcircled{g}_m}$. Moreover, we have
$$\begin{array}{rll}
(a+b)_r^{\tiny\textcircled{g}_m}&=&(a+b)^{m-1}\big((a+b)^m\big)_r^{\tiny\textcircled{g}}\\
&=&(a^{m-1}+b^{m-1})\big((a^m)_r^{\tiny\textcircled{g}}+(b^m)_r^{\tiny\textcircled{g}}\big)\\
&=&a^{m-1}(a^m)_r^{\tiny\textcircled{g}}+b^{m-1}(b^m)_r^{\tiny\textcircled{g}}\\
&=&a_r^{\tiny\textcircled{g}_m}+b_r^{\tiny\textcircled{g}_m},
\end{array}$$ as asserted.\end{proof}

We are ready to prove:

\begin{thm} Let $a\in \mathcal{A}$. Then $a\in \mathcal{A}_r^{\tiny\textcircled{g}_m}$ if and only if\end{thm}
\begin{enumerate}
\item [(1)] $a\in \mathcal{A}_r^d$;
\vspace{-.5mm}
\item [(2)] There exists $x\in \mathcal{A}$ such that $$(aa_r^d)^*aa_r^dx=(aa_r^d)^*a^m.$$
\end{enumerate}
In this case, $a_r^{\tiny\textcircled{g}_m}=(a_r^d)^{m+1}x$.
\begin{proof} $\Longrightarrow $ Obviously, $a\in \mathcal{A}_r^{d}.$ In light of Theorem 2.1,
$a^m\in \mathcal{A}_r^{\tiny\textcircled{g}}$. According to~\cite[Theorem 2.6]{CM}, there exists $x\in \mathcal{A}$ such that
$((a^m)(a^m)_r^d)^*((a^m)(a^m)_r^d)x=((a^m)(a^m)_r^d)^*a^m.$ Hence, $(aa_r^d)^*aa_r^dx=(aa_r^d)^*a^m,$ as desired.

$\Longleftarrow $ By hypothesis, $$(aa_r^d)^*aa_r^dx=(aa_r^d)^*a^m.$$ for some $x\in \mathcal{A}$.
Then $$((a^m)(a^m)_r^d)^*((a^m)(a^m)_r^d)x=((a^m)(a^m)_r^d)^*a^m.$$ By virtue of ~\cite[Theorem 3.3]{CM},
$a^m\in \mathcal{A}_r^{\tiny\textcircled{g}}$. Moreover, we have
$$\begin{array}{rll}
a_r^{\tiny\textcircled{g}_m}&=&a^{m-1}(a^m)_r^{\tiny\textcircled{g}}\\
&=&a^{m-1}(a_r^d)^{2m}aa_r^dx\\
&=&[a^{m-1}(a_r^d)^{m-1}](a_r^d)^{m+1}aa_r^dx\\
&=&(a_r^d)^{m+1}x,
\end{array}$$ as asserted.\end{proof}

\begin{cor} Let $a\in \mathcal{A}_r^{\tiny\textcircled{g}_m}$. Then $a\in \mathcal{A}_r^{\tiny\textcircled{g}_{m+1}}$.
In this case, $$a_r^{\tiny\textcircled{g}_{m+1}}=[a_r^{\tiny\textcircled{g}_m}]^2a.$$\end{cor}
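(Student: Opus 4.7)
The plan is to apply Theorem 2.4 twice and then reconcile the two resulting formulas for the inverse. Given $a\in\mathcal{A}_r^{\tiny\textcircled{g}_m}$, Theorem 2.4 supplies some $x\in\mathcal{A}$ satisfying $(aa_r^d)^*aa_r^dx=(aa_r^d)^*a^m$, together with the description $a_r^{\tiny\textcircled{g}_m}=(a_r^d)^{m+1}x$. Setting $y:=xa$ and right-multiplying the displayed equation by $a$ immediately yields $(aa_r^d)^*aa_r^d y=(aa_r^d)^*a^{m+1}$. Since $a\in\mathcal{A}_r^d$ in any case, Theorem 2.4 applied with $m+1$ in place of $m$ delivers $a\in\mathcal{A}_r^{\tiny\textcircled{g}_{m+1}}$ together with the explicit formula $a_r^{\tiny\textcircled{g}_{m+1}}=(a_r^d)^{m+2}xa$.

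It then remains to match this with the advertised expression $[a_r^{\tiny\textcircled{g}_m}]^2 a$. Writing $z:=a_r^{\tiny\textcircled{g}_m}=(a_r^d)^{m+1}x$, I would start from the defining identity $az^2=z$. Iterating $a(a_r^d)^2=a_r^d$ gives the useful identity $a(a_r^d)^{k+1}=(a_r^d)^k$ for every $k\ge 1$; substituting $z=(a_r^d)^{m+1}x$ into $az^2=z$ and applying this with $k=m$ reduces the identity to $(a_r^d)^m x(a_r^d)^{m+1}x=(a_r^d)^{m+1}x$. Left-multiplying by $a_r^d$ collapses both sides to $z^2=(a_r^d)^{m+2}x$, and hence $z^2 a=(a_r^d)^{m+2}xa=a_r^{\tiny\textcircled{g}_{m+1}}$, which is the desired formula.

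I expect the main obstacle to be exactly this last identification rather than the existence part, which is essentially a one-line consequence of Theorem 2.4. The formula produced by Theorem 2.4 carries an extra factor of $a$ sitting on the right of $x$, and the generalized right Drazin setting does not permit a direct cancellation of $a$. The trick is to absorb one power of $a$ into $(a_r^d)^{m+1}$ via $a(a_r^d)^{m+1}=(a_r^d)^m$, and then to restore the required exponent by left-multiplying with $a_r^d$ after invoking $az^2=z$; this converts the defining identity of $a_r^{\tiny\textcircled{g}_m}$ into the clean algebraic identity $z^2=(a_r^d)^{m+2}x$ that closes the gap.
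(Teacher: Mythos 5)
Your proposal is correct and follows essentially the same route as the paper: both apply Theorem 2.4 twice (passing from $x$ to $xa$ by right-multiplying the defining equation by $a$) and then reconcile $(a_r^d)^{m+2}xa$ with $[a_r^{\tiny\textcircled{g}_m}]^2a$ using $az^2=z$ together with $a(a_r^d)^2=a_r^d$. The only difference is cosmetic: the paper first records the auxiliary identity $a_r^da\,a_r^{\tiny\textcircled{g}_m}=a_r^{\tiny\textcircled{g}_m}$ and then expands, whereas you left-multiply the substituted identity by $a_r^d$ to get $z^2=(a_r^d)^{m+2}x$ directly.
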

\begin{proof} In view of Theorem 2.4, $a\in \mathcal{A}_r^d$ and there exists $x\in \mathcal{A}$ such that $$(aa_r^d)^*aa_r^dx=(aa_r^d)^*a^m.$$
Then $(aa_r^d)^*aa_r^d(xa)=(aa_r^d)^*a^{m+1}.$ By virtue of Theorem 2.4, $a_r^{\tiny\textcircled{g}_m}=(a_r^d)^{m+1}x$ for some $x\in R$.
Then
$$\begin{array}{rll}
a_r^daa_r^{\tiny\textcircled{g}_m}&=&a_r^da(a_r^d)^{m+1}x=a_r^d[a(a_r^d)^2](a_r^d)^{m-1}x\\
&=&(a_r^d)^2(a_r^d)^{m-1}x=(a_r^d)^{m+1}x=a_r^{\tiny\textcircled{g}_m}.
\end{array}$$

By using Theorem 2.4 again, $a\in \mathcal{A}_r^{\tiny\textcircled{g}_{m+1}}$ and
$$\begin{array}{rll}
a_r^{\tiny\textcircled{g}_{m+1}}&=&(a_r^d)^{m+2}(xa)\\
&=&a_r^d[(a_r^d)^{m+1}x]a\\
&=&a_r^da_r^{\tiny\textcircled{g}_m}a\\
&=&a_r^da[a_r^{\tiny\textcircled{g}_m}]^2a\\
&=&[a_r^{\tiny\textcircled{g}_m}]^2a.
\end{array}$$ This completes the proof.\end{proof}

Recall that $a\in \mathcal{A}$ has Drazin inverse if there exists $x\in \mathcal{A}$ such that $ax^2=x, ax=xa, a^n=a^{n+1}x$ for some $n\in {\Bbb N}$. Such $x$ is unique, if it exists, and we denote it by $a^D$. We now generalize ~\cite[Proposition 3.11]{Z4} as follows.

\begin{cor} Let $a\in \mathcal{A}$. Then $a\in \mathcal{A}^{\tiny\textcircled{W}_m}$ if and only if\end{cor}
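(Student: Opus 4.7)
The plan is to derive this corollary as the Drazin-invertible specialization of Theorem~2.4. Based on the parallel with Theorem~2.4 and the reference to \cite[Proposition 3.11]{Z4}, I expect the statement to characterize $a\in \mathcal{A}^{\tiny\textcircled{W}_m}$ by the two conditions (1)~$a\in \mathcal{A}^D$ and (2)~there exists $x\in \mathcal{A}$ with $(aa^D)^*aa^Dx=(aa^D)^*a^m$, together with the explicit representation $a^{\tiny\textcircled{W}_m}=(a^D)^{m+1}x$.

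For the necessity, I would first observe that the defining equation $xa^{k+1}=a^k$ of the $m$-weak group inverse forces $a$ to be Drazin invertible, so $a\in \mathcal{A}_r^d$ with $a_r^d=a^D$. I would then argue that any $m$-weak group inverse is automatically an $m$-generalized right group inverse: the axiom $x=ax^2$ is shared, the quasi-nilpotency of $a^n-axa^n$ is immediate since this expression vanishes for $n\geq \mathrm{ind}(a)$, and the adjoint equation $(aa^D)^*a^{m+1}x=(aa^D)^*a^m$ follows from the $m$-weak group inverse axiom $(a^k)^*a^{m+1}x=(a^k)^*a^m$ via the Drazin identity $a^k=aa^D\cdot a^k$. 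Theorem~2.4 then produces the required $x$ with $(aa^D)^*aa^Dx=(aa^D)^*a^m$.

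For the sufficiency, I would first invoke Theorem~2.4 with $a_r^d=a^D$ to obtain $w=(a^D)^{m+1}x$ as an $m$-generalized right group inverse of $a$; a direct computation using $a^{m+1}(a^D)^{m+1}=(aa^D)^{m+1}=aa^D$ converts (2) into the standard form $(aa^D)^*a^{m+1}w=(aa^D)^*a^m$. To upgrade $w$ to the $m$-weak group inverse I need to verify the two strict equalities $wa^{k+1}=a^k$ and $(a^k)^*a^{m+1}w=(a^k)^*a^m$ for $k\geq \mathrm{ind}(a)$. The second follows by left-multiplying $(aa^D)^*a^{m+1}w=(aa^D)^*a^m$ by $(a^k)^*((a^D)^*)^{k-1}$ and using $(a^k)^*(aa^D)^*=(a^k)^*$, which is valid whenever $k\geq \mathrm{ind}(a)$. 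Uniqueness of the $m$-weak group inverse then yields the stated representation.

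The main obstacle is the Drazin stability $wa^{k+1}=a^k$, since it is a strict algebraic equality without the adjoint buffer that shields condition (2). My plan is to expand $wa^{k+1}=(a^D)^{m+1}xa^{k+1}$, insert the Drazin idempotent $p=aa^D$ freely via the identity $(a^D)^{m+1}=(a^D)^{m+1}p$ coming from $a^D=a^Daa^D$, and then apply $pa^{k+1}=a^{k+1}$ for $k\geq \mathrm{ind}(a)$ in combination with condition (2) to collapse $(a^D)^{m+1}pxa^{k+1}$ to $a^k$. This step is precisely where the two-sided nature of Drazin invertibility, rather than only the one-sided generalized right Drazin invertibility used in Section~2, becomes essential, and it is exactly the reason the corollary upgrades the conclusion from $\mathcal{A}_r^{\tiny\textcircled{g}_m}$ to $\mathcal{A}^{\tiny\textcircled{W}_m}$.
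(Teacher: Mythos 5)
Your overall strategy --- specializing Theorem~2.4 to the Drazin-invertible case --- is exactly what the paper does (its entire proof is ``This is obvious by Theorem~2.4''), and your necessity direction is fine, as is your derivation of $(a^k)^*a^{m+1}w=(a^k)^*a^m$ from the identity $(a^k)^*(aa^D)^*=(a^k)^*$. A minor point: the paper's condition (2) is $(a^D)^*a^Dx=(a^D)^*a^m$ with representation $a^{\tiny\textcircled{W}_m}=(a^D)^{m+2}x$; your form $(aa^D)^*aa^Dx=(aa^D)^*a^m$ with $(a^D)^{m+1}x$ is equivalent under the substitution $x\leftrightarrow a^Dx$, so that discrepancy is only a normalization issue.

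The genuine gap is your treatment of $wa^{k+1}=a^k$. The mechanism you describe --- inserting $p=aa^D$ via $(a^D)^{m+1}=(a^D)^{m+1}p$ and then ``applying condition (2)'' to collapse $(a^D)^{m+1}pxa^{k+1}$ to $a^k$ --- cannot work: condition (2) reads $p^*px=p^*a^m$, so every occurrence of $x$ in it is shielded on the left by $p^*$, and no amount of idempotent insertion produces a factor $p^*$ in $(a^D)^{m+1}pxa^{k+1}$. At best, properness of the involution tells you that $px$ is \emph{uniquely determined} by (2); it does not tell you what $pxa^{k+1}$ is. The workable route uses the third axiom that Theorem~2.4 actually delivers, namely $\lim_{n\to\infty}\|a^n-awa^n\|^{\frac 1n}=0$. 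Since $a^k=a^{k+n}(a^D)^n$ for $k\geq \mathrm{ind}(a)$, one gets $\|(1-aw)a^k\|^{\frac 1n}\leq \|(1-aw)a^{k+n}\|^{\frac 1n}\,\|a^D\|\to 0$, which forces the exact equality $awa^k=a^k$. Then, because $w=(a^D)^{m+1}x$ satisfies $w=aa^Dw=a^D(aw)$, you conclude $wa^{k+1}=a^D(awa^k)a=a^Da^{k+1}=a^k$. So the step you correctly flag as the crux is provable, but the Drazin hypothesis enters through upgrading the spectral-radius condition to an exact identity, not through condition (2) as you propose.
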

\begin{enumerate}
\item [(1)] $a\in \mathcal{A}^D$;
\vspace{.5mm}
\item [(2)] There exists $x\in \mathcal{A}$ such that $$(a^D)^*a^Dx=(a^D)^*a^m.$$
\end{enumerate}
In this case, $a^{\tiny\textcircled{W}_m}=(a^D)^{m+2}x$.
\begin{proof} This is obvious by Theorem 2.4.\end{proof}

\begin{cor} Let $a\in \mathcal{A}$. Then $a\in \mathcal{A}_r^{\tiny\textcircled{g}_m}$ if and only if\end{cor}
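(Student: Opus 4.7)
The corollary is evidently offering yet another equivalent characterization of membership in $\mathcal{A}_r^{\tiny\textcircled{g}_m}$ in parallel with Theorem 2.4, so the cleanest plan is to reduce everything to Theorem 2.4 rather than go back through Theorem 2.1. Concretely, I will show that condition (2) of the present corollary is equivalent to the existence of some $y\in \mathcal{A}$ with $(aa_r^d)^*aa_r^d y=(aa_r^d)^*a^m$, which is exactly condition (2) of Theorem 2.4, and then invoke Theorem 2.4 to conclude in both directions.

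For the forward implication, I would assume $a\in \mathcal{A}_r^{\tiny\textcircled{g}_m}$ and use Theorem 2.4 to extract $a\in \mathcal{A}_r^d$ together with an element $y$ satisfying $(aa_r^d)^*aa_r^d y=(aa_r^d)^*a^m$. Then I would produce the witness $x$ demanded by (2) by multiplying both sides on the right by a suitable product of factors of $a$ and $a_r^d$, using the defining identities $a_r^d=a(a_r^d)^2$ and $a^2 a_r^d = a a_r^d a$ to rearrange the expression into the required form. For the reverse direction, starting from (2), I would perform the inverse transformation via the same identities to recover the hypothesis of Theorem 2.4 for some $y'\in \mathcal{A}$; Theorem 2.4 then delivers $a\in \mathcal{A}_r^{\tiny\textcircled{g}_m}$ together with the explicit formula $a_r^{\tiny\textcircled{g}_m}=(a_r^d)^{m+1}y'$, which in turn translates into whatever closed form the corollary records.

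The principal obstacle is the one-sided nature of $a_r^d$: since it is only a generalized \emph{right} Drazin inverse, the standard commutation $aa_r^d=a_r^d a$ is not available, the projector $aa_r^d$ is not known to be left-regular, and any Drazin-style absorption identity must be used only on the side where it is actually valid. I will therefore have to rely solely on the genuine one-sided relations $a_r^d=a(a_r^d)^2$ and $a^2 a_r^d=aa_r^d a$, and avoid any manipulation that implicitly assumes the left analogue or treats $aa_r^d$ as a two-sided projector. I expect this sidedness bookkeeping, rather than any conceptual leap, to be where most of the care has to go.
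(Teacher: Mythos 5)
The corollary in question is Corollary 2.7, and its condition (2) is not the solvability of an equation by some element of $\mathcal{A}$: it asserts the existence of an \emph{idempotent} $p\in\mathcal{A}$ with $a_r^d\mathcal{A}=p\mathcal{A}$ and $(aa_r^d)^*a^mp=(aa_r^d)^*a^m$, with the closed form $a_r^{\tiny\textcircled{g}_m}=a_r^dp$. Your plan handles the converse direction correctly and in essentially the paper's way: writing $p=a_r^dz$ and setting $x=a^{m-1}z$, the identity $aa_r^da^{m-1}=a^ma_r^d$ (obtained by iterating $a^2a_r^d=aa_r^da$ --- exactly the one-sided bookkeeping you anticipate) gives $(aa_r^d)^*aa_r^dx=(aa_r^d)^*a^mp=(aa_r^d)^*a^m$, and Theorem 2.4 then yields $a\in\mathcal{A}_r^{\tiny\textcircled{g}_m}$ with $a_r^{\tiny\textcircled{g}_m}=(a_r^d)^{m+1}a^{m-1}z=(a_r^d)^2z=a_r^dp$.

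The forward direction is where your proposal has a genuine gap. You cannot manufacture the required idempotent by right-multiplying the identity $(aa_r^d)^*aa_r^dy=(aa_r^d)^*a^m$ of Theorem 2.4(2) by products of $a$ and $a_r^d$. The only idempotent such manipulations naturally produce is $p=aa_r^d$, and for that choice the required identity becomes $(aa_r^d)^*a^m(1-aa_r^d)=0$, equivalently $(aa_r^d)^*(1-aa_r^d)a^m=0$; nothing in the hypotheses forces the spectral-type idempotent $1-aa_r^d$ to interact with the involution in this way, so this candidate fails in general. The paper instead passes to $a^m\in\mathcal{A}_r^{\tiny\textcircled{g}}$ via Theorem 2.1 and invokes the idempotent characterization of generalized right group invertibility from the companion paper (Theorem 3.1 of the reference cited as CM) to obtain a special idempotent $p$ satisfying $p\mathcal{A}=a^m(a^m)_r^d\mathcal{A}=aa_r^d\mathcal{A}$ together with the symmetry $(a^m)^*a^mp=p^*(a^m)^*a^m$, and only from that symmetry does it derive $(aa_r^d)^*a^mp=(aa_r^d)^*a^m$. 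Supplying that construction, or an equivalent source for such a $p$, is the essential step missing from your proof.
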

\begin{enumerate}
\item [(1)] $a\in \mathcal{A}_r^d$;
\vspace{-.5mm}
\item [(2)] There exists an idempotent $p\in \mathcal{A}$ such that $$a_r^d\mathcal{A}=p\mathcal{A}~\mbox{and} ~(aa_r^d)^*a^mp=(aa_r^d)^*a^{m}.$$
\end{enumerate}
In this case, $a_r^{\tiny\textcircled{g}_m}=a_r^dp.$
\begin{proof} $\Longrightarrow $ By using~\cite[Theorem 3.1]{CM}, $a^m\in \mathcal{A}_r^{d}$ and there exists an idempotent $p\in \mathcal{A}$ such that
$$a^m(a^m)_r^d\mathcal{A}=p\mathcal{A}~\mbox{and} ~(a^m)^*a^mp=p^*(a^m)^*a^m.$$ Then
$a\in mathcal{A}_r^d$ and $a_r^d=a^{m-1}(a^m)_ra^d$. Then $aa_r^d=a[a^{m-1}(a^m)_ra^d]=a^m(a^m)_r^d$. We directly check that
$$\begin{array}{rll}
paa_r^d&=&aa_r^dpaa_r^d=aa_r^d,\\
aa_r^dp&=&aa_r^daa_r^dp=p.
\end{array}$$ Moreover, we have
$$\begin{array}{rl}
&(aa_r^d)^*a^mp\\
=&((a_r^d)^m)^*[(a^m)^*a^mp]=((a_r^d)^m)^*[(a^m)^*a^mp]^*=((a_r^d)^m)^*[p^*(a^m)^*a^m]\\
=&[a^mp(a_r^d)^m]^*a^m=[a^m(paa_r^d)(a_r^d)^m]^*a^m=[a^m(aa_r^d)(a_r^d)^m]^*a^m\\
=&(aa_r^d)^*a^m.
\end{array}$$ This implies that
$(aa_r^d)^*a^mp=(aa_r^d)^*a^m$, as required.

$\Longleftarrow $ By hypothesis, there exists an idempotent $p\in \mathcal{A}$ such that $$a_r^d\mathcal{A}=p\mathcal{A}~\mbox{and}
~(aa_r^d)^*a^mp=(aa_r^d)^*a^{m}.$$
Write $p=a_r^dz$ with $z\in \mathcal{A}$. Then $$\begin{array}{rll}
(aa_r^d)^*aa_r^d(a^{m-1}z)&=&(aa_r^d)^*a^mp\\
&=&(aa_r^d)^*a^m.
\end{array}$$ In light of Theorem 2.4, $a\in \mathcal{A}_r^{\tiny\textcircled{g}_m}$ and
$$a_r^{\tiny\textcircled{g}_m}=(a_r^d)^{m+1}(a^{m-1}z)=(a_r^d)^2z=a_r^dp,$$ as asserted.\end{proof}

Recall that $a\in \mathcal{A}$ is regular if there exists $x\in \mathcal{A}$ such that $a=axa$. Such an $x$ is denoted by $a^{-}$. We now derive

\begin{thm} Let $a\in \mathcal{A}$ be regular. Then $a\in \mathcal{A}_r^{\tiny\textcircled{g}_{m+1}}$ if and only if $a\in \mathcal{A}_r^d$
and $a^2a^{-}\in \mathcal{A}_r^{\tiny\textcircled{g}_m}$. In this case, $$a_r^{\tiny\textcircled{g}_{m+1}}=[(a^2a^{-})_r^{\tiny\textcircled{g}_m}]^2a.$$\end{thm}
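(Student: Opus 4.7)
Set $c = a^2 a^{-}$ and $p = aa^{-}$. Since $a$ is regular, $p^2 = (aa^{-}a)a^{-} = p$ is an idempotent satisfying $pa = a$ and $c = ap$. An easy induction gives $c^k = a^k p = a^{k+1}a^{-}$ for $k\ge 1$, whence $c^k\cdot a = a^{k+1}$. Combining with the right Drazin identity $a_r^d = a(a_r^d)^2$, I would derive $pa_r^d = a(a^{-}a)(a_r^d)^2 = a_r^d$ and therefore $ca_r^d = aa_r^d$.

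The first substantive step is to exhibit $c\in\mathcal{A}_r^d$ with $c_r^d = a_r^d p$. Using $pa_r^d = a_r^d$, one checks $(c_r^d)^k = (a_r^d)^k p$ for all $k\ge 1$, from which $c(c_r^d)^2 = c(a_r^d)^2 p = a_r^d p = c_r^d$. Both $c^2 c_r^d$ and $cc_r^d c$ simplify to $a^2 a_r^d p$ via $pa = a$ and the identity $a^2 a_r^d = aa_r^d a$. For the quasinilpotent remainder $c - cc_r^d c = (a - aa_r^d a)p = np$ with $n\in\mathcal{A}^{qnil}$, the asymmetric commutation $pn = n$ (an immediate consequence of $pa = a$) yields $(np)^k = n^k p$ by induction; combined with $\|n^k\|^{1/k}\to 0$, this gives $np\in\mathcal{A}^{qnil}$.

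With $c_r^d$ in hand, I would invoke Theorem 2.1 to rewrite both sides as generalized right group invertibility of powers ($a\in\mathcal{A}_r^{\tiny\textcircled{g}_{m+1}}$ iff $a^{m+1}\in\mathcal{A}_r^{\tiny\textcircled{g}}$; $c\in\mathcal{A}_r^{\tiny\textcircled{g}_m}$ iff $c^m = a^{m+1}a^{-}\in\mathcal{A}_r^{\tiny\textcircled{g}}$), and Theorem 2.4 to convert each side to a solvability condition: $(aa_r^d)^*(aa_r^d)x = (aa_r^d)^*a^{m+1}$ versus $(cc_r^d)^*(cc_r^d)y = (cc_r^d)^*c^m$. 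The substitutions $y = xa^{-}$ (multiplying the $a$-side equation on the right by $a^{-}$) and $x = ya$ (the reverse, absorbing the missing factor via $aa^{-}a = a$) transport solutions between them, establishing the equivalence. The formula $a_r^{\tiny\textcircled{g}_{m+1}} = [c_r^{\tiny\textcircled{g}_m}]^2 a$ then follows by combining the Theorem 2.4 representations $a_r^{\tiny\textcircled{g}_{m+1}} = (a_r^d)^{m+2}x$ and $c_r^{\tiny\textcircled{g}_m} = (c_r^d)^{m+1}y = (a_r^d)^{m+1}py$ with the identities in the previous steps.

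The main obstacle is the quasinilpotence verification in Step 2, which hinges on the one-sided commutation $pn = n$ (but not $np = n$) forcing $(np)^k = n^k p$; this is the only place where the proof is not purely algebraic. The remaining Theorem 2.4 manipulations are routine provided one keeps careful track of the extra idempotent $p$ appearing in $cc_r^d = aa_r^d p$.
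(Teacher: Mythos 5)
Your proposal is correct in substance but routes the forward implication differently from the paper. The paper proves $\Longrightarrow$ by exhibiting $x=aa_r^{\tiny\textcircled{g}_{m+1}}a^{-}$ and verifying the three defining conditions of $(a^2a^{-})_r^{\tiny\textcircled{g}_m}$ directly, including the spectral-radius limit; you instead push both implications through the solvability criterion of Theorem 2.4, which eliminates the limit computations entirely and confines the analytic content to showing $(a-aa_r^da)p\in\mathcal{A}^{qnil}$ --- your argument there ($pn=n$, hence $(np)^k=n^kp$) is sound and is just Cline's formula done by hand. Your identification $(a^2a^{-})_r^d=a_r^daa^{-}$ coincides with the paper's Steps 1--2. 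One detail needs repair: writing $e=aa_r^d$ and $p=aa^{-}$, you have $cc_r^d=ep$ and $(ep)^*=p^*e^*$, so the naive substitutions $y=xa^{-}$ and $x=ya$ do not literally satisfy the target equations, because the outer $p^*$ cannot be cancelled. The fix uses $pe=e$, hence $epe=e$ and $e^*(ep)^*=e^*$: from $e^*ex=e^*a^{m+1}$ take $y=exa^{-}$, since $ep(exa^{-})=(epe)xa^{-}=exa^{-}$; conversely, from $(ep)^*(ep)y=(ep)^*a^{m+1}a^{-}$ right-multiply by $a$ and left-multiply by $e^*$ to get $e^*e(pya)=e^*a^{m+1}$, so $x=pya$ works. (Equivalently, normalize $x=ex$ and $y=py$ first, after which your substitutions are exact.) This is exactly the ``extra idempotent'' you flagged, so the gap is real but entirely fixable --- and, notably, your corrected version avoids the unjustified left-cancellation of $(a^{-})^*$ that appears in the paper's own Step 3. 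The closing formula $a_r^{\tiny\textcircled{g}_{m+1}}=[(a^2a^{-})_r^{\tiny\textcircled{g}_m}]^2a$ remains the least developed part of your sketch; it drops out most cleanly from the forward-direction identification $(a^2a^{-})_r^{\tiny\textcircled{g}_m}=aa_r^{\tiny\textcircled{g}_{m+1}}a^{-}$ rather than from recombining the Theorem 2.4 representations.
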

\begin{proof} $\Longrightarrow $ Set $x=aa_r^{\tiny\textcircled{g}_{m+1}}a^{-}$. Then we check that
$$\begin{array}{rll}
  &(a^2a^{-})x^2=(a^2a^{-})[aa_r^{\tiny\textcircled{g}_{m+1}}a^{-}]^2\\
  =&a^2(a_r^{\tiny\textcircled{g}_{m+1}})^2a^{-}]=aa_r^{\tiny\textcircled{g}_{m+1}}a^{-}=x,\\
  &\big((a^2a^{-})_r^d\big)^*(a^2a^{-})^{m+1}x\\
  =&\big((a^2a^{-})_r^d\big)^*(a^2a^{-})^{m+1}aa_r^{\tiny\textcircled{g}_{m+1}}a^{-}\\
  =&\big(a_r^daa^{-}\big)^*a^{m+2}a_r^{\tiny\textcircled{g}_{m+1}}a^{-}=(aa^{-})^*[\big(a_r^d\big)^*a^{m+2}a_r^{\tiny\textcircled{g}_{m+1}}]a^{-}\\
  =&(aa^{-})^*[\big(a_r^d\big)^*a^{m+1}]a^{-}=\big(((a^2a^{-}))_r^d\big)^*(a^2a^{-})^m,\\
  \end{array}$$
  We check that
  $$\begin{array}{rl}
  &(a^2a^{-})^n-(a^2a^{-})x(a^2a^{-})^n\\
  =&(a^2a^{-})^n-(a^2a^{-})[aa_r^{\tiny\textcircled{g}_{m+1}}a^{-}](a^2a^{-})^n\\
  =&a^2a^{-}(a^2a^{-})^{n-1}-a^2a_r^{\tiny\textcircled{g}_{m+1}}a^{-}a^2a^{-}(a^2a^{-})^{n-1}\\
  =&a[aa^{-}a^2a^{-}(a^2a^{-})^{n-2}-aa_r^{\tiny\textcircled{g}_{m+1}}aa^{-}a^2a^{-}(a^2a^{-})^{n-2}]\\
  =&a[(a^2a^{-})^{n-2}a^2-aa_r^{\tiny\textcircled{g}_{m+1}}(a^2a^{-})^{n-2}a^2]a^{-}\\
 =&a[a^n-aa_r^{\tiny\textcircled{g}_{m+1}}a^{n}]a^{-}.\\
 \end{array}$$ Hence,
$$||(a^2a^{-})^n-(a^2a^{-})x(a^2a^{-})^n||^{\frac{1}{n}}\leq ||a||^{\frac{1}{n}}||a^n-aa_r^{\tiny\textcircled{g}_{m+1}}a^{n}||^{\frac{1}{n}}||a^{-}||^{\frac{1}{n}}.$$
Since $\lim\limits_{n\to \infty}||a^n-aa_r^{\tiny\textcircled{g}_{m+1}}a^{n}||^{\frac{1}{n}}=0,$
we have
$$\lim\limits_{n\to \infty}||(a^2a^{-})^n-(a^2a^{-})x(a^2a^{-})^n||^{\frac{1}{n}}=0.$$
Accordingly, $(a^2a^{-})^{\tiny\textcircled{g}_{m}}=x$.

$\Longleftarrow$ Step 1. We verify that
$$\begin{array}{rll}
a^2a^{-}(a^2a^{-})_r^d&=&a^2a^{-}a_r^daa^{-}=a[aa^{-}a](a_r^d)^2aa^{-}\\
&=&a^2(a_r^d)^2aa^{-}=aa_r^daa^{-}.
\end{array}$$
Step 2. $(a^2a^{-})_r^d=a_r^daa^{-}$.
We verify that
$$\begin{array}{rll}
(a^2a^{-})(a_r^daa^{-})^2&=&(aa_r^daa^{-})(a_r^daa^{-})\\
&=&a(a_r^d)^2aa^{-}=a_r^daa^{-},\\
(a^2a^{-})(a_r^daa^{-})(a^2a^{-})&=&(aa_r^daa^{-})(a^2a^{-})=aa_r^da^2a^{-}=a^2a_r^daa^{-}\\
&=&(a^2a^{-})(aa_r^daa^{-})=(a^2a^{-})^2(a_r^daa^{-}).
\end{array}$$ Since $(a-aa_r^da)a^{-}a=a-aa_r^da\in \mathcal{A}^{qnil}$, by using Cline's formula (see~\cite[Theorem 2.2]{L2}), we see that
$a(a-aa_r^da)a^{-}\in \mathcal{A}^{qnil}$. Therefore $a^2a^{-}-a^2a^{-}(a_r^daa^{-})a^2a^{-}=a^2a^{-}-a^2a_r^daa^{-}\in \mathcal{A}^{qnil}$.
Therefore $(a^2a^{-})_r^d=a_r^daa^{-}$.

Step 3. By virtue of Theorem 2.4, there exists some $x\in R$ such that
$$\big(a^2a^{-}(a^2a^{-})_r^d\big)^*[a^2a^{-}(a^2a^{-})_r^d]x=\big(a^2a^{-}(a^2a^{-})_r^d\big)^*[a^2a^{-}]^m.$$
Hence, we have
$$\big(aa_r^daa^{-}\big)^*[aa_r^daa^{-}]x=\big(aa_r^daa^{-}\big)^*[a^2a^{-}]^m.$$
$$\big(aa_r^da\big)^*[aa_r^da][a^{-}x]=\big(aa_r^da\big)^*[a^2a^{-}]^m.$$
$$\big(a^2a_r^d\big)^*[aa_r^da][a^{-}x]=\big(a^2a_r^d\big)^*[a^2a^{-}]^m.$$
$$\big(a^2(a_r^d)^2\big)^*[aa_r^da][a^{-}x]=\big(a^2(a_r^d)^2\big)^*[a^2a^{-}]^m.$$
$$\big(aa_r^d\big)^*[aa_r^da][a^{-}xa]=\big(aa_r^d\big)^*a^{m+1}.$$
By virtue of Theorem again, $a\in \mathcal{A}_r^{\tiny\textcircled{g}_{m+1}}$.\end{proof}

\begin{cor} Let $a\in \mathcal{A}$ be regular. Then $a\in \mathcal{A}^{\tiny\textcircled{W}_{m+1}}$ if and only if
$a^2a^{-1}\in \mathcal{A}^{\tiny\textcircled{W}_m}$. In this case, $$a^{\tiny\textcircled{W}_{m+1}}=[(a^2a^{-})^{\tiny\textcircled{W}_m}]^2a.$$\end{cor}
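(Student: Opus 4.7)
The plan is to prove Corollary 2.9 in direct parallel to Theorem 2.8, using the Drazin-version characterization Corollary 2.6 in place of the generalized right Drazin characterization Theorem 2.4. The key observation is that the $m$-weak group inverse $\mathcal{A}^{\tiny\textcircled{W}_m}$ is the two-sided Drazin analog of $\mathcal{A}_r^{\tiny\textcircled{g}_m}$: when $a\in\mathcal{A}^D$, the Drazin inverse $a^D$ plays the role of $a_r^d$, and the quasinilpotent tail condition becomes the genuine Drazin-index equation $xa^{k+1}=a^k$. So the whole scheme of Theorem 2.8 should carry over almost verbatim.

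For the forward direction, I would assume $a\in\mathcal{A}^{\tiny\textcircled{W}_{m+1}}$ and set $x=aa^{\tiny\textcircled{W}_{m+1}}a^-$, exactly as in the proof of Theorem 2.8. The three conditions defining the $m$-weak group inverse of $a^2a^-$ need to be verified: $(a^2a^-)x^2=x$, $x(a^2a^-)^{k+1}=(a^2a^-)^k$ for some $k$, and $((a^2a^-)^k)^*(a^2a^-)^{m+1}x=((a^2a^-)^k)^*(a^2a^-)^m$. The first and third are computed as in Theorem 2.8, using the standard identities $aa_r^d=a^Daa^-a$ replaced by $a^2a^-$-type absorptions. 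The second (the Drazin-index equation) is the new ingredient that was not needed in Theorem 2.8: I would prove it by choosing $k$ to be the Drazin index of $a$ and then telescoping $(a^2a^-)^{k+1}=a^{k+1}$ on the relevant side, using $a a^D a = a^2 a^D$ and $a^{\tiny\textcircled{W}_{m+1}}a^{k+2}=a^{k+1}$ (which holds for some $k$ since $a\in\mathcal{A}^D$).

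For the converse, I would first establish $(a^2a^-)^D=a^Daa^-$, exactly mirroring Steps 1 and 2 of Theorem 2.8: the algebraic identities $(a^2a^-)(a^Daa^-)^2=a^Daa^-$ and $(a^2a^-)(a^Daa^-)(a^2a^-)=(a^2a^-)^2(a^Daa^-)$ are the same calculations, and Cline's formula (applied to $a-aa^Da=(a-aa^Da)a^-a$) gives the Drazin-invertibility of the remainder; this is the only place where Drazin (rather than just generalized Drazin) really needs to be handled. Then, assuming $a^2a^-\in\mathcal{A}^{\tiny\textcircled{W}_m}$, Corollary 2.6 supplies some $y\in\mathcal{A}$ with $((a^2a^-)^D)^*(a^2a^-)^D y=((a^2a^-)^D)^*(a^2a^-)^m$, and I would substitute $(a^2a^-)^D=a^Daa^-$ and simplify, using the string of identities displayed in Step 3 of the proof of Theorem 2.8, to obtain $(a^D)^*a^D(a^-ya)=(a^D)^*a^{m+1}$. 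A second invocation of Corollary 2.6 then yields $a\in\mathcal{A}^{\tiny\textcircled{W}_{m+1}}$.

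Finally, the explicit formula $a^{\tiny\textcircled{W}_{m+1}}=[(a^2a^-)^{\tiny\textcircled{W}_m}]^2a$ should follow from tracking the representations through the two directions: on the one hand, $(a^2a^-)^{\tiny\textcircled{W}_m}=aa^{\tiny\textcircled{W}_{m+1}}a^-$ from the forward direction, so $[(a^2a^-)^{\tiny\textcircled{W}_m}]^2a = aa^{\tiny\textcircled{W}_{m+1}}a^-aa^{\tiny\textcircled{W}_{m+1}}a^-a$, which collapses to $a^{\tiny\textcircled{W}_{m+1}}$ after using $a^{\tiny\textcircled{W}_{m+1}}a^-a\cdot a^{\tiny\textcircled{W}_{m+1}} = (a^{\tiny\textcircled{W}_{m+1}})^2 a$ (an analog of Corollary 2.5) together with $a(a^{\tiny\textcircled{W}_{m+1}})^2=a^{\tiny\textcircled{W}_{m+1}}$. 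The main obstacle I anticipate is precisely this last bookkeeping: keeping track of which $a^-$'s are absorbed by the idempotent $aa^-$ and which survive, so that the formula reduces cleanly without an extraneous $a^-$ or $a$ factor.
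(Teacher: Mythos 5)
Your proposal is correct and is essentially the paper's own approach: the paper's entire proof reads ``This is obvious by Theorem 2.8,'' and what you have written is exactly the honest expansion of that one-liner, replacing $a_r^d$ by $a^D$, the quasinilpotent limit condition by the finite Drazin-index equation, and Theorem 2.4 by its Drazin analogue Corollary 2.6, with Cline's formula supplying both $(a^2a^-)^D=a^Daa^-$ and the equivalence $a\in\mathcal{A}^D\Leftrightarrow a^2a^-\in\mathcal{A}^D$. The final bookkeeping you worry about (absorbing the idempotent $a^-a$ into $a^{\tiny\textcircled{W}_{m+1}}$ to collapse $[(a^2a^-)^{\tiny\textcircled{W}_m}]^2a$) is likewise left unverified in the paper's proof of Theorem 2.8 itself, so you are not missing anything the authors actually supply.
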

\begin{proof} This is obvious by Theorem 2.8.\end{proof}

\section{equivalent characterizations}

In this section we investigate equivalent characterizations of $m$-generalized right group inverse. We now establish the relation between $m$-generalized right group inverse and $m$-generalized group decomposition. Our starting point is the following.

\begin{thm} Let $a\in \mathcal{A}$. Then $a\in \mathcal{A}_r^{\tiny\textcircled{g}_m}$ if and only if\end{thm}
\begin{enumerate}
\item [(1)] $a\in \mathcal{A}_r^d$;
\vspace{-.5mm}
\item [(2)] There exists $x\in \mathcal{A}$ such that
$$x=ax^2, [(a^m)^*a^{m+1}x]^*=(a^m)^*a^{m+1}x, \lim\limits_{n\to \infty}||a^n-axa^{n}||^{\frac{1}{n}}=0.$$
\end{enumerate} In this case, $a_r^{\tiny\textcircled{g}_m}=aa_r^dx$.
\begin{proof} $\Longrightarrow $ Let $a_r^{\tiny\textcircled{g}_m}=x$. Then $a\in \mathcal{A}_r^d$ and there exists $x\in \mathcal{A}$ such that  $$x=ax^2, (aa_r^d)^*a^{m+1}x=(aa_r^d)^*a^m, \lim\limits_{n\to \infty}||a^n-axa^{n}||^{\frac{1}{n}}=0.$$ By virtue of Theorem 2.1,
$(a^m)_r^{\tiny\textcircled{g}}=(a_r^{\tiny\textcircled{g}_m})^m=x^m$. In view of~\cite[Theorem 2.6]{CM}, we have
$\big((a^m)^*a^{2m}x^m\big)^*=(a^m)^*a^{2m}x^m.$
Clearly, $$(a^m)^*a^{2m}x^m=(a^m)^*a^m(a^mx^m)=(a^m)^*a^{m+1}x;$$ hence,
$[(a^m)^*a^{m+1}x]^*=(a^m)^*a^{m+1}x$.

$\Longleftarrow $ By hypothesis, there exists $x\in \mathcal{A}$ such that
$$x=ax^2, [(a^m)^*a^{m+1}x]^*=(a^m)^*a^{m+1}x, \lim\limits_{n\to \infty}||a^n-axa^{n}||^{\frac{1}{n}}=0.$$

Let $y=x^m$. Then $$\begin{array}{rll}
a^my^2&=&(a^mx^m)x^m=ax^{m+1}=x^m=y,\\
(a^m)^*a^{2m}y&=&(a^m)^*a^{2m}x^m=(a^m)^*a^{m}(a^mx^m)=(a^m)^*a^{m+1}x,\\
\big((a^m)^*a^{2m}y\big)^*&=&\big((a^m)^*a^{m+1}x\big)^*=(a^m)^*a^{m+1}x=(a^m)^*a^{2m}y,\\
\end{array}$$
Moreover, we have
$$(a^m)^n-(a^m)y(a^m)^{n}=(a^m)^n-a^mx^m(a^m)^{n}=a^{mn}-axa^{mn}.$$
Hence, $$\lim\limits_{n\to \infty}||(a^m)^n-(a^m)y(a^m)^{n}||^{\frac{1}{n}}=0.$$
Thus $a^m\in \mathcal{A}_r^{\tiny\textcircled{g}}$ and $(a^m)_r^{\tiny\textcircled{g}}=y$. In light of Theorem 2.1, $a\in \mathcal{A}_r^{\tiny\textcircled{g}_m}$ and
$$a_r^{\tiny\textcircled{g}_m}=a^{m-1}(a^m)_r^{\tiny\textcircled{g}}=a^{m-1}y=a^{m-1}x^m=[a^{m-1}x^{m-1}]x=ax^2=x,$$ as asserted.\end{proof}

\begin{cor} Let $a\in \mathcal{A}$. Then $a\in \mathcal{A}^{\tiny\textcircled{g}_m}$ if and only if\end{cor}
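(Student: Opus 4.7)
The plan is to obtain this corollary as the two-sided specialization of Theorem~3.1. I expect the statement to parallel Theorem~3.1, replacing $a\in\mathcal{A}_r^d$ with the two-sided $a\in\mathcal{A}^d$, and substituting the two-sided convergence $\lim_{n\to\infty}\|a^n-xa^{n+1}\|^{1/n}=0$ from the definition of the $m$-generalized group inverse for the one-sided $\|a^n-axa^n\|^{1/n}\to 0$.

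For the forward direction, I would start from an $m$-generalized group inverse $x$ of $a$. The equations $x=ax^2$ and $[(a^m)^*a^{m+1}x]^{*}=(a^m)^*a^{m+1}x$ are immediate from the definition recalled in the introduction. The two-sided generalized Drazin invertibility of $a$ is then extracted in the standard way from $x$ together with the convergence condition (taking $a^d$ to be a suitable polynomial in $x$ and $a$, as done for the $m$-generalized group inverse in \cite{C1}). The passage from the two-sided convergence to the right-sided form required by Theorem~3.1 uses $x=ax^2$ to rewrite $xa^{n+1}=ax^2a^{n+1}$ and compare with $axa^n$ via iteration.

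For the reverse direction, Theorem~3.1 immediately yields $a\in\mathcal{A}_r^{\tiny\textcircled{g}_m}$ with $a_r^{\tiny\textcircled{g}_m}=aa_r^d x$; since the two-sided hypothesis $a\in\mathcal{A}^d$ gives $a_r^d=a^d$, the candidate inverse becomes $aa^d x$. The upgrade from the right-sided inverse to the full $m$-generalized group inverse exploits the commutativity $aa^d=a^d a$: the Hermitian and idempotent-like conditions transfer directly, and the right-sided convergence promotes to the two-sided one by absorbing the central idempotent $aa^d$ into $x$.

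The main obstacle is bridging the two formulations of the convergence condition. The cleanest route is the Pierce decomposition relative to the two-sided idempotent $aa^d$: on its range $a$ is invertible, so $axa^n$ and $xa^{n+1}$ differ by a unit factor and the two convergence conditions coincide there; on its kernel $a$ is quasinilpotent, so both quantities automatically decay at the required asymptotic root rate. This decomposition argument is the essential ingredient that makes the proof a one-line appeal to Theorem~3.1 once the structural observation about $aa^d$ is in hand.
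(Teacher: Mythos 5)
Your proposal is correct and follows the paper's own route: the paper proves this corollary in one line by observing that $a\in \mathcal{A}^{\tiny\textcircled{g}_m}$ if and only if $a\in \mathcal{A}^d\cap \mathcal{A}_r^{\tiny\textcircled{g}_m}$ and then invoking Theorem~3.1, which is exactly your reduction; the bridging of the two convergence conditions via the central idempotent $aa^d$ that you spell out is precisely the content the paper dismisses as ``evident.'' The only discrepancy is in your guess of the statement: the paper's condition (2) keeps the one-sided form $\lim_{n\to\infty}\|a^n-axa^{n}\|^{1/n}=0$ verbatim from Theorem~3.1 rather than the two-sided $\|a^n-xa^{n+1}\|^{1/n}$, but this does not affect the validity of your argument.
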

\begin{enumerate}
\item [(1)] $a\in \mathcal{A}^d$;
\vspace{-.5mm}
\item [(2)] There exists $x\in \mathcal{A}$ such that
$$x=ax^2, [(a^m)^*a^{m+1}x]^*=(a^m)^*a^{m+1}x, \lim\limits_{n\to \infty}||a^n-axa^{n}||^{\frac{1}{n}}=0.$$
\end{enumerate} In this case, $a_r^{\tiny\textcircled{g}_m}=aa_r^dx$.
\begin{proof} Evidently, $a\in \mathcal{A}^{\tiny\textcircled{g}_m}$ if and only if $a\in \mathcal{A}^d\bigcap \mathcal{A}_r^{\tiny\textcircled{g}_m}$.
Therefore we obtain the result by Theorem 3.1.\end{proof}

\begin{thm} Let $a\in \mathcal{A}$. Then the following are equivalent:\end{thm}
\begin{enumerate}
\item [(1)] $a\in \mathcal{A}_r^{\tiny\textcircled{g}_m}$.
\vspace{-.5mm}
\item [(2)] $a$ has $m$-generalized group decomposition, i.e., there exist $x,y\in \mathcal{A}$ such that $$a=x+y, x^*a^{m-1}y=yx=0, x\in \mathcal{A}_r^{\#}, y\in \mathcal{A}^{qnil}.$$
\end{enumerate}
In this case, $a_r^{\tiny\textcircled{g}_m}=x_r^{\#}$.
\begin{proof} $(1)\Rightarrow (2)$ By hypotheses, we have $z\in \mathcal{A}$ such that $$z=az^2, ((a^m)^*a^{m+1}z)^*=(a^m)^*a^{m+1}z, \lim\limits_{n\to
\infty}||a^n-aza^n||^{\frac{1}{n}}=0.$$
For any $n\in {\Bbb N}$, we have $az=a(az^2)=a^2z^2=a^2(az^2)z=a^3z^3=\cdots =a^nz^n=\cdots =a^{n+1}z^{n+1}$.
Thus, we prove that $$\begin{array}{rll}
||az-azaz||&=&||(a-aza)z||\\
&=&||(a^n-aza^{n})z^{n-1}||.
\end{array}$$ Hence, $$\begin{array}{rll}
||az-azaz||^{\frac{1}{n}}&\leq &||(a^n-aza^{n})||^{\frac{1}{n}}||z||^{1-\frac{1}{n}}.
\end{array}$$
This implies that $$\lim\limits_{n\to \infty}||az-azaz||^{\frac{1}{n}}=0,$$ whence, $az=azaz$.

Set $x=a^2z$ and $y=a-a^2z.$ Then $a=x+y$.
We check that $$\begin{array}{rcl}
(a^2-aza^2)z&=&(a^2-aza^2)az^2\\
&=&(a^2-aza^2)a^2z^3\\
&\vdots&\\
&=&(a^2-aza^2)a^{n-2}z^{n-1}\\
&=&(a^n-aza^{n})z^n.
\end{array}$$
Therefore $$||(a^2-aza^2)z||^{\frac{1}{n}}\leq ||a^n-aza^{n}|^{\frac{1}{n}}|||z||.$$
Since $$\lim\limits_{n\to \infty}||a^n-aza^{n}||^{\frac{1}{n}}=0,$$ we deduce that $$\lim\limits_{n\to \infty}||(a^2-aza^2)z||^{\frac{1}{n}}=0.$$
This implies that $(a^2-aza^2)z=0$.

We claim that $x$ has right group inverse. We easily check that
$$\begin{array}{rll}
xz^2&=&(a^2z)z^2=az^2=z,\\
xzx&=&a^2z^2a^2z=aza^2z=a^2z=x,\\
x^2z&=&a^2za^2z^2=a^2zaz=a^2z=x.
\end{array}$$ Then $x\in \mathcal{A}_r^{\#}.$

We check that
$$\begin{array}{rll}
||(a-aza)^{n+1}||^{\frac{1}{n+1}}&=&||(a-aza)^n(a-aza)||^{\frac{1}{n+1}}\\
&=&||(a-aza)^{n-1}(a-aza)a||^{\frac{1}{n+1}}\\
&=&||(a-aza)^{n-1}a^2||^{\frac{1}{n+1}}\\
&\vdots&\\
&=&||(a-aza)a^n||^{\frac{1}{n+1}}\\
&\leq &\big[||a^{n+1}-aza^{n+1}||^{\frac{1}{n}}\big]^{\frac{n}{n+1}}.
\end{array}$$ Accordingly, $$\lim\limits_{n\to \infty}||(a-aza)^{n+1}||^{\frac{1}{n+1}}=0.$$ Hence, $a-aza\in \mathcal{A}^{qnil}$. By
Cline's formula (see~\cite[Theorem 2.2]{L2}), $y=a-a^2z\in \mathcal{A}^{qnil}$.

Furthermore, we verify that $$\begin{array}{rll}
x^*a^{m-1}y&=&(a^2z)^*a^{m-1}(a-a^2z)=[a(az)]^*a^{m-1}(a-a^2z)\\
&=&[a(a^mz^m)]^*a^{m-1}(a-a^2z)=(z^{m-1})^*[(a^{m+1}z)^*a^m](1-az)\\
&=&(z^{m-1})^*[(a^m)^*a^{m+1}z]^*(1-az)=(z^{m-1})^*[(a^m)^*a^{m+1}z](1-az)\\
&=&(z^{m-1})^*(a^m)^*a^{m+1}(z-zaz)=(z^{m-1})^*(a^m)^*a^{m}a(z-zaz)=0,\\
yx&=&(a-a^2z)(a^2z)=a^3z-a^2za^2z=a^3z-a(a^2z)=0,
\end{array}$$ as desired.

$(2)\Rightarrow (1)$ By hypothesis, there exist $x,y\in \mathcal{A}$ such that $$a=x+y, x^*a^{m-1}y=yx=0, x\in \mathcal{A}_r^{\#}, y\in \mathcal{A}^{qnil}.$$
Hence, $a^m=a^{m-1}x+a^{m-1}y$. We verify that
$$\begin{array}{rll}
a^{m-1}x&=&a^{m-2}(x+y)x=a^{m-2}x^2=\cdots =x^m,\\
ya^{m-1}&=&y(x+y)a^{m-2}=y^2a^{m-2}=\cdots =y^m,\\
\big(a^{m-1}x\big)^*\big(a^{m-1}y\big)&=&(x^{m-1})^*[x^*a^{m-1}]y=0,\\
\big(a^{m-1}y\big)\big(a^{m-1}x\big)&=&\big(a^{m-1}y\big)x^m=a^{m-1}(yx)x^{m-1}=0.
\end{array}$$ Since $y\in \mathcal{A}^{qnil}$, we see that $y^m\in \mathcal{A}^{qnil}$. By
Cline's formula, $a^{m-1}y\in \mathcal{A}^{qnil}$. It is easy to verify that

$$\begin{array}{rll}
x^m[(x_r^{\#})^m]^2&=&[x^m(x_r^{\#})^m](x_r^{\#})^m=(x_r^{\#})^m=x,\\
(x^m)^2(x_r^{\#})^m&=&x^m[(x^m)(x_r^{\#})^m]=x^m[xx_r^{\#}]=x^{m-1}[x^2x_r^{\#}]=x^m,\\
x^m(x_r^{\#})^mx^m&=&xx_r^{\#}x^m=[xx_r^{\#}x]x^{m-1}=x^m.
\end{array}$$
Hence, $(x^m)_r^{\#}=(x_r^{\#})^m$. Accordingly, we have $a^{m-1}x\in \mathcal{A}_r^{\#}$ and $[a^{m-1}x]_r^{\#}=(x^m)_r^{\#}=(x_r^{\#})^m.$
By virtue of ~\cite[Theorem 2.6]{CM}, $a^m\in \mathcal{A}_r^{\tiny\textcircled{g}}$ and
$(a^m)_r^{\tiny\textcircled{g}}=[a^{m-1}x]_r^{\#}=(x_r^{\#})^m.$
According to Theorem 2.1, $a\in \mathcal{A}$ has $m$-generalized right group inverse and we have
$$\begin{array}{rll}
a_r^{\tiny\textcircled{g}_m}&=&a^{m-1}(a^m)_r^{\tiny\textcircled{g}}=a^{m-1}(x_r^{\#})^m=a^{m-2}(x+y)(x_r^{\#})^m\\
&=&a^{m-2}x(x_r^{\#})^m=x^{m-1}(x_r^{\#})^m=x_r^{\#},
\end{array}$$ as asserted.\end{proof}

\begin{cor} Let $a\in \mathcal{A}_r^{\tiny\textcircled{g}_m}$. Then the following hold:\end{cor}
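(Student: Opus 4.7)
The plan is to reduce each assertion of the corollary to the $m$-generalized group decomposition furnished by Theorem 3.3. Explicitly, I would write $a = x + y$ with $x \in \mathcal{A}_r^{\#}$, $y \in \mathcal{A}^{qnil}$, $yx = 0$ and $x^* a^{m-1} y = 0$, and record at the outset that $a_r^{\tiny\textcircled{g}_m} = x_r^{\#}$, together with the derived identities
$$a^{m-1} x = x^m, \qquad a^m = x^m + a^{m-1} y, \qquad (x_r^{\#})^m = (a^m)_r^{\tiny\textcircled{g}},$$
the third coming from Theorem 2.1 combined with Corollary 2.2. These identities will be the only machinery needed.

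The easy conclusions I would clear first. From $yx = 0$ one gets $y x_r^{\#} = y (x_r^{\#})^2 x = 0$, hence $a\, a_r^{\tiny\textcircled{g}_m} = x x_r^{\#}$, which is a genuine idempotent. Consequently any routine identity of right-group-inverse type --- $a (a_r^{\tiny\textcircled{g}_m})^2 = a_r^{\tiny\textcircled{g}_m}$, $a a_r^{\tiny\textcircled{g}_m} a a_r^{\tiny\textcircled{g}_m} = a a_r^{\tiny\textcircled{g}_m}$, $(a_r^{\tiny\textcircled{g}_m})^k a = (x_r^{\#})^k x$, and so on --- collapses to the analogous property of $x_r^{\#}$ acting on $x$. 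Any $*$-symmetric conclusion such as $\bigl((a^m)^* a^{m+1} a_r^{\tiny\textcircled{g}_m}\bigr)^* = (a^m)^* a^{m+1} a_r^{\tiny\textcircled{g}_m}$ is already built into Theorem 3.1, and any qnil-type conclusion such as $a - a^2 a_r^{\tiny\textcircled{g}_m} \in \mathcal{A}^{qnil}$ or $\lim\limits_{n \to \infty} ||a^n - a a_r^{\tiny\textcircled{g}_m} a^n||^{\frac{1}{n}} = 0$ reduces via Cline's formula to the fact that $y \in \mathcal{A}^{qnil}$.

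The main obstacle I anticipate is the one-sided nature of the decomposition: we only have $yx = 0$, not $xy = 0$, so products in which $a_r^{\tiny\textcircled{g}_m}$ sits on the left of $a$ (e.g.\ $a_r^{\tiny\textcircled{g}_m} a$) do not separate cleanly into a group part and a quasinilpotent part. For any such conclusion my tactic would be to pass to the $m$-th power, use the two-sided orthogonality of $x^m$ and $a^{m-1} y$ together with the known properties of $(a^m)_r^{\tiny\textcircled{g}}$ from~\cite{CM}, and then multiply by $a^{m-1}$ on the left using the Theorem~2.1 identity $a_r^{\tiny\textcircled{g}_m} = a^{m-1} (a^m)_r^{\tiny\textcircled{g}}$ to return to the $a_r^{\tiny\textcircled{g}_m}$ level. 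A secondary technicality is that the relation $x^* a^{m-1} y = 0$ sometimes must be recast as $(a^{m-1} x)^* (a^{m-1} y) = 0$, i.e., as an orthogonality between $x^m$ and $a^{m-1} y$, before it can be fed into~\cite[Theorem 2.6]{CM}; that rewriting is where the bookkeeping lives.
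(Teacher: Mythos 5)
The two assertions to be proved are (1) $a_r^{\tiny\textcircled{g}_m}=a_r^{\tiny\textcircled{g}_m}aa_r^{\tiny\textcircled{g}_m}$ and (2) $aa_r^{\tiny\textcircled{g}_m}=a^n(a_r^{\tiny\textcircled{g}_m})^n$ for all $n\in{\Bbb N}$. Your strategy of reading everything off the decomposition of Theorem 3.3, with $z:=a_r^{\tiny\textcircled{g}_m}=x_r^{\#}$, $yx_r^{\#}=0$ and $az=xz$, is exactly the paper's (its proof is literally ``obvious by the proof of Theorem 3.3''), and it settles (2) immediately: the single relation $z=az^2$ gives the telescoping $az=a(az^2)=a^2z^2=a^2(az^2)z=\cdots=a^nz^n$ already displayed in that proof. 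So for (2) you are fine, and you do not even need the decomposition.

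The gap is in (1). Since $yz=0$, item (1) reduces to $zxz=z$, i.e.\ to $x_r^{\#}xx_r^{\#}=x_r^{\#}$, which is precisely the sort of identity you propose to wave through as ``routine of right-group-inverse type.'' It is not routine: the defining relations $xz^2=z$, $x^2z=xzx=x$ say nothing about $zxz$, every attempt to compute it from them is circular ($zxz=(xz^2)xz=xz\cdot zxz$, etc.), and the identity can genuinely fail. For instance, on $\ell^2\oplus{\Bbb C}$ take $x=L\oplus 0$ ($L$ the left shift, $S$ its right inverse) and $z=(S\oplus 0)+v$, where $v$ maps the ${\Bbb C}$-summand into ${\Bbb C}e_1$; because $Le_1=0$ one checks $xz^2=z$ and $x^2z=xzx=x$, yet $zxz=S\oplus 0\neq z$. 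Worse, this $z$ satisfies every clause of Definition 1.1 for $a=x$, so (1) is not a formal consequence of the axioms at all --- it requires either fixing a canonical choice of the (non-unique) right group inverse or an argument exploiting the specific element built in Theorem 3.3. The paper's one-line proof skates over exactly the same point, so you are reproducing its gap rather than creating a new one, but as written your plan does not contain the idea needed to close item (1).
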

\begin{enumerate}
\item [(1)] $a_r^{\tiny\textcircled{g}_m}=a_r^{\tiny\textcircled{g}_m}aa_r^{\tiny\textcircled{g}_m}$.
\vspace{-.5mm}
\item [(2)] $aa_r^{\tiny\textcircled{g}_m}=a^n(a_r^{\tiny\textcircled{g}_m})^n$ for any $n\in {\Bbb N}$.
\end{enumerate}
\begin{proof} These are obvious by the proof of Theorem 3.3.\end{proof}

As is well known, $a\in R$ has generalized Drazin inverse if and only if it has quasi-polar property, i.e., there exists an idempotent $p\in R$ such that $$a+p\in \mathcal{A}^{-1}, pa=pa\in \mathcal{A}^{qnil}$$ (see~\cite[Theorem 6.4.8]{CM1}). We come now to characterize $m$-generalized right group inverse by the polar-like property.

\begin{thm} Let $a\in \mathcal{A}$. Then the following are equivalent:\end{thm}
\begin{enumerate}
\item [(1)] $a\in \mathcal{A}_r^{\tiny\textcircled{g}_m}$.
\item [(2)] There exists an idempotent $p\in \mathcal{A}$ such that
$$\begin{array}{c}
(1-p)a(1-p)\in [(1-p)\mathcal{A}(1-p)]_r^{-1},\\
\big((a^m)^*a^{m}p\big)^*=(a^m)^*a^{m}p, ap\in \mathcal{A}^{qnil}, (1-p)\mathcal{A}=a(1-p)\mathcal{A}.
\end{array}$$
\end{enumerate}
\begin{proof} $(1)\Rightarrow (2)$ Since $a\in R_r^{\tiny\textcircled{g}_m}$, by virtue of Theorem 3.3, there exist $z,y\in \mathcal{A}$ such that $$a=z+y, z^*a^{m-1}y=yz=0, z\in \mathcal{A}_r^{\#}, y\in \mathcal{A}^{qnil}.$$ Set $x=z_r^{\#}$. The we check that $$\begin{array}{rll}
ax&=&(z+y)z_r^{\#}=zz_r^{\#},\\
ax^2&=&(ax)x=z(z_r^{\#})^2=z_r^{\#}=x.
\end{array}$$ Since $yz=0$, we see that $a^kz=a^{k-1}(y+z)z=a^{k-1}z^2=\cdots =z^{k+1}$.
As $z^*a^{m-1}y=0$, we derive that $$\begin{array}{rll}
(a^m)^*z^m&=&a^*(a^{m-1})^*z^m=[z^*a^{m-1}(y+z)]^*z^{m-1}\\
&=&[z^*a^{m-1}z]^*z^{m-1}=[z^*(a^{m-1}z)]^*z^{m-1}\\
&=&[z^*(z^{m-1}z)]^*z^{m-1}=(z^m)^*z^m.
\end{array}$$ Hence, we have
$$\begin{array}{rll}
(a^m)^*a^{m+1}x&=&(a^m)^*a^{m}(ax)=(a^m)^*a^{m}(zz_r^{\#})=(a^m)^*(a^{m}z)z_r^{\#}\\
&=&(a^m)^*(z^{m+1}z_r^{\#})=(a^m)^*z^m=(z^m)^*z^m.
\end{array}$$ Therefore $$[(a^m)^*a^{m+1}x]^*=[(z^m)^*z^m]^*=(z^m)^*z^m=(a^m)^*a^{m+1}x.$$
Let $p=1-zz_r^{\#}$. Then $p=1-ax=p^2\in \mathcal{A}$. Since $a-azz_r^{\#}=a(1-zz_r^{\#})=(z+y)(1-zz_r^{\#})=y\in \mathcal{A}^{qnil}$, by Cline's formula (see~\cite[Lemma 6.4.10]{CM1}), $pa=(1-zz_r^{\#})a=a-zz_r^{\#}a\in \mathcal{A}^{qnil}$ .
Therefore we have $$[(a^m)^*a^{m}p]^*=((a^m)^*a^{m}-[(a^m)^*a^{m+1}x]^*=(a^m)^*a^m-(a^m)^*a^{m+1}x=(a^m)^*a^{m}p.$$
Since $pa(1-p)=(1-zz_r^{\#})(z+y)zz_r^{\#}=0$, we get $pa=pap$. Then we verify that
$$[(1-p)a(1-p)][(1-p)z_r^{\#}(1-p)]=(z^2z_r^{\#})z_r^{\#}(zz_r^{\#}))=zz_r^{\#})=1-p.$$ Hence,
$(1-p)a(1-p)\in [(1-p)\mathcal{A}(1-p)]_r^{-1}$. Clearly, $ap=(z+y)(1-zz_r^{\#})=y\in \mathcal{A}^{qnil}$.
We verify that $$1-p=[(1-p)a(1-p)][(1-p)z_r^{\#}(1-p)]=a(1-p)z_r^{\#}(1-p)\in a(1-p)\mathcal{A}$$ and
$$a(1-p)=(1-p)a(1-p)\in (1-p)\mathcal{A}.$$ Therefore $(1-p)\mathcal{A}=a(1-p)\mathcal{A},$ as required.

$(2)\Rightarrow (1)$ By hypothesis, there exists an idempotent $p\in \mathcal{A}$ such that $$\begin{array}{c}
(1-p)a(1-p)\in [(1-p)\mathcal{A}(1-p)]_r^{-1},\\
\big((a^m)^*a^{m}p\big)^*=(a^m)^*a^{m}p, ap\in \mathcal{A}^{qnil}, (1-p)\mathcal{A}=a(1-p)\mathcal{A}.
\end{array}$$

Set $x=a(1-p)$ and $y=ap$. Then $$\begin{array}{rll}
a&=&x+y, yx=apa(1-p)=0,\\
y&=&ap\in \mathcal{A}^{qnil}.
\end{array}$$

Moreover, we verify that $$\begin{array}{rll}
x^*a^{m-1}y&=&[x^m(x^{\#})^{m-1}]^*a^{m-1}y=((x^{\#})^{m-1})^*(x^m)^*a^{m-1}(ap)\\
&=&[(x^{\#})^{m-1}]^*(1-p)^*[(a^m)^*a^{m}p]=[(x^{\#})^{m-1}]^*(1-p)^*[(a^m)^*a^{m}p]^*\\
&=&[(x^{\#})^{m-1}]^*(1-p)^*p^*[(a^m)^*a^{m}]^*=0.
\end{array}$$

Since $(1-p)\mathcal{A}=a(1-p)\mathcal{A}$, we have $pa(1-p)=0$. Clearly, $x=a(1-p)=(1-p)a(1-p)$. Let $z=[(1-p)a(1-p)]_r^{-1}$. Then
$$\begin{array}{rll}
x^2z&=&a(1-p)a(1-p)z=a(1-p)=x,\\
xzx&=&(1-p)a(1-p)zx=(1-p)x=x,\\
xz^2&=&[(1-p)a(1-p)z]z=(1-p)z=z.
\end{array}$$ Hence $x\in \mathcal{A}_r^{\#}$.
Therefore $a\in \mathcal{A}_r^{\tiny\textcircled{g}_m}$ by Theorem 3.3.\end{proof}

\begin{cor} Let $a\in \mathcal{A}_r^{\tiny\textcircled{g}_m}$. Then there exists an idempotent $p\in \mathcal{A}$ such that
$$a+p\in \mathcal{A}_r^{-1}, [(a^m)^*a^{m}p]^*=(a^m)^*a^{m}p, pa=pap\in \mathcal{A}^{qnil}.$$\end{cor}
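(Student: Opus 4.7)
The plan is to extract the idempotent $p$ from Theorem 3.5 and verify each of the three required conditions. The statistical condition $\big((a^m)^*a^{m}p\big)^*=(a^m)^*a^{m}p$ is delivered verbatim by Theorem 3.5, so only the quasinilpotent equality $pa=pap\in \mathcal{A}^{qnil}$ and the right invertibility of $a+p$ need to be established.

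For the quasinilpotent part, I first translate the hypothesis $(1-p)\mathcal{A}=a(1-p)\mathcal{A}$ into $a(1-p)\in (1-p)\mathcal{A}$, which yields $pa(1-p)=0$, equivalently $pa=pap$. To then show $pa\in\mathcal{A}^{qnil}$, I expand the product by induction to obtain $(pa)^n=p(ap)^{n-1}a$ for all $n\geq 1$, so that
$$\|(pa)^n\|^{1/n}\leq \|p\|^{1/n}\|(ap)^{n-1}\|^{1/n}\|a\|^{1/n}.$$
Since $ap\in\mathcal{A}^{qnil}$ by Theorem 3.5, the middle factor tends to $0$ as $n\to\infty$, giving $pa\in \mathcal{A}^{qnil}$, as required.

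For the right invertibility of $a+p$, I would use the idempotent decomposition of $a$ relative to $p$. Writing $A=(1-p)a(1-p)$, $B=(1-p)ap$, $D=pap$, the identity $pa(1-p)=0$ just established yields the block-triangular form $a=A+B+D$, and therefore $a+p=A+B+(D+p)$. Theorem 3.5 supplies $z\in(1-p)\mathcal{A}(1-p)$ with $Az=1-p$. The element $D=pap=(pa)p$ inherits quasinilpotence from $pa$, so inside the corner $p\mathcal{A}p$ (with identity $p$) the element $p+D$ is invertible; let $u\in p\mathcal{A}p$ denote its inverse. I then claim that $w:=z+u-zBu$ is a right inverse of $a+p$. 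Expanding $(a+p)w$ and using the orthogonality relations $z=(1-p)z(1-p)$, $u=pup$, $B=(1-p)Bp$ kills every cross term (for example $Bz=Dz=pz=Au=0$), reducing the product to $Az+(D+p)u=(1-p)+p=1$.

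The main obstacle is the bookkeeping in the last step: one must keep track of which corner each of $z$, $u$, $B$, $D$ lives in, so as to confirm that the many potentially dangerous cross terms in $(a+p)(z+u-zBu)$ really do vanish. Once that is done, the result follows directly from Theorem 3.5 together with a short quasinilpotent calculation.
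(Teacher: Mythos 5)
Your proposal is correct, and it reaches the conclusion by a genuinely different route from the paper. The paper does not work from the polar-like data of Theorem 3.5 at all: it goes back to the $m$-generalized group decomposition $a=z+y$ (with $z\in\mathcal{A}_r^{\#}$, $y\in\mathcal{A}^{qnil}$, $yz=0$) underlying that theorem, sets $p=1-zz_r^{\#}$, and factors
$$a+p=(z+1-zz_r^{\#})\bigl[1+(z_r^{\#}+1-zz_r^{\#})(1-z(1-zz_r^{\#}))y\bigr],$$
where the first factor is right invertible (its product with $z_r^{\#}+1-zz_r^{\#}$ is the unipotent $1+z(1-zz_r^{\#})$) and the bracket is invertible because its quasinilpotent part is obtained from $y$ by Cline's formula. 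You instead take $p$ directly from the statement of Theorem 3.5, derive $pa(1-p)=0$ from $(1-p)\mathcal{A}=a(1-p)\mathcal{A}$, transfer quasinilpotence from $ap$ to $pa$ via $(pa)^n=p(ap)^{n-1}a$ (Cline's formula again, in effect), and then build an explicit right inverse $w=z+u-zBu$ of $a+p$ from the block-triangular Peirce decomposition, using the corner right inverse $z$ of $(1-p)a(1-p)$ and the Neumann-series inverse $u$ of $p+pap$ in $p\mathcal{A}p$. Your cross-term bookkeeping checks out: every product other than $Az$, $Bu$, $A(zBu)$ and $(D+p)u$ vanishes because of the corner locations of $z$, $u$, $B$, $D$, and the sum collapses to $(1-p)-Bu+Bu+p=1$. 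The trade-off is that your argument is more modular (it uses only the stated conclusion of Theorem 3.5, not its proof) at the cost of the $2\times 2$ block computation, whereas the paper's factorization is shorter but leans on the internal construction of $z$, $y$ and $p$ from Theorem 3.3; note also that the paper's write-up leaves $pa=pap\in\mathcal{A}^{qnil}$ implicit (it is buried in the proof of Theorem 3.5), while you verify it explicitly.
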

\begin{proof} By virtue of Theorem 3.5, there exists an idempotent $p\in \mathcal{A}$ such that
$$\begin{array}{c}
(1-p)a(1-p)\in [(1-p)\mathcal{A}(1-p)]_r^{-1},\\
\big((a^m)^*a^{m}p\big)^*=(a^m)^*a^{m}p, ap\in \mathcal{A}^{qnil}, (1-p)\mathcal{A}=a(1-p)\mathcal{A}.
\end{array}$$ Construct $z$ and $y$ as in Theorem 3.5. Then $p=1-zz_r^{\#}=1-a_r^{\tiny\textcircled{g}_m}=p^2\in \mathcal{A}$. We verify that $$[z+1-zz_r^{\#}][z_r^{\#}+1-zz_r^{\#}]=zz_r^{\#}+z(1-zz_r^{\#})+1-zz_r^{\#}=1+z(1-zz_r^{\#})\in \mathcal{A}^{-1}.$$
Since $y(z_r^{\#}+1-zz_r^{\#})(1-z(1-zz_r^{\#}))=y\in \mathcal{A}^{qnil}$, it follows by Cline's formula that $(z_r^{\#}+1-zz_r^{\#})(1-z(1-zz_r^{\#}))y\in \mathcal{A}^{qnil}$.
Hence $1+(z_r^{\#}+1-zz_r^{\#})(1-z(1-zz_r^{\#}))y\in \mathcal{A}^{-1}$. This implies that $$\begin{array}{rll}
a+p&=&z+y+1-zz_r^{\#}\\
&=&(z+1-zz_r^{\#})[1+(z_r^{\#}+1-zz_r^{\#})(1-z(1-zz_r^{\#}))y]\\
&\in&\mathcal{A}_r^{-1},
\end{array}$$ as asserted.\end{proof}

\begin{thm} Let $a\in \mathcal{A}$. Then the following are equivalent:\end{thm}
\begin{enumerate}
\item [(1)] $a\in \mathcal{A}_{r}^{\tiny\textcircled{g}_m}$.
\item [(2)] There exists $b\in \mathcal{A}$ such that $$bab=b, a^2b^2=ab, [(a^m)^*a^{m+1}b]^*=(a^m)^*a^{m+1}b, ab\mathcal{A}=a^2b\mathcal{A}, a-a^2b\in \mathcal{A}^{qnil}.$$
\end{enumerate}
\begin{proof} $(1)\Rightarrow (2)$ By hypothesis, there exist $x,y\in \mathcal{A}$ such that $$a=x+y, x^*a^{m-1}y=yx=0, x\in \mathcal{A}_{r}^{\#},
y\in \mathcal{A}^{qnil}.$$ Set $b=x_{r}^{\#}$. Then
$$\begin{array}{rll}
bab&=&x_{r}^{\#}(x+y)x_{r}^{\#}=x_{r}^{\#}xx_{r}^{\#}=x_{r}^{\#}=b,\\
a^2b^2&=&(x+y)^2(x_{r}^{\#})^2=x^2(x_{r}^{\#})^2=xx_{r}^{\#}\\
&=&(xx_{r}^{\#})^2=(x+y)x_{r}^{\#}(x+y)x_{r}^{\#}\\
&=&(ab)^2.
\end{array}$$
Moreover, we have $$\begin{array}{rll}
ab&=&(x+y)x_r^{\#}=xx_{r}^{\#};\\
a^2b&=&(x+y)xx_r^{\#}=x^2x_r^{\#}=x.
\end{array}$$ Hence, $ab\in a^2b\mathcal{A}$, and so $ab\mathcal{A}=a^2b\mathcal{A}$. Since $a(1-xx^{\tiny\textcircled{\#}})=(x+y)(1-xx^{\tiny\textcircled{\#}})=y\in \mathcal{A}^{qnil}$, by using Cline's formula, $a-a^2b=a(1-xx^{\tiny\textcircled{\#}})\in \mathcal{A}^{qnil}.$
By hypothesis, we have
$x^*a^{m-1}y=0$ and $yx=0$, we have $$x^*[x^{m-1}+x^{m-2}y+\cdots +xy^{m-2}+y^{m-1}]y=0.$$ Hence,
$$x^*[x^{m-1}y+x^{m-2}y^2+\cdots +xy^{m-1}+y^{m}]=0.$$
Further, we verify that
$$\begin{array}{rll}
x^*(x+y)^{m+1}&=&x^*[x^{m}+x^{m-1}y+\cdots +xy^{m-1}+y^m](x+y)\\
&=&[x^*x^{m}+x^*(x^{m-1}y+\cdots +xy^{m-1}+y^m)](x+y)\\
&=&x^*x^{m+1}+x^*x^{m}y.
\end{array}$$
Therefore we have
$$\begin{array}{rl}
&(a^m)^*a^{m+1}b\\
=&((x+y)^m)^*(x+y)^{m+1}x_r^{\#}\\
=&[x^m+x^{m-1}y+\cdots +xy^{m-1}+y^m]^*(x+y)^{m+1}x_r^{\#}\\
=&[x^{m-1}+x^{m-2}y+\cdots +y^{m-1}]^*x^*(x+y)^{m+1}x_r^{\#}+[y^m]^*(x+y)^{m+1}x_r^{\#}\\
=&[x^{m-1}+x^{m-2}y+\cdots +y^{m-1}]^*[x^*x^{m+1}+x^*x^{m}y]x_r^{\#}+[y^m]^*x^{m+1}x_r^{\#}\\
=&[x^{m-1}+x^{m-2}y+\cdots +y^{m-1}]^*[x^*x^{m+1}x_r^{\#}]+[y^m]^*x^{m+1}x_r^{\#}\\
=&[x^{m}+x^{m-1}y+\cdots +xy^{m-1}]^*x^{m}+[y^m]^*x^{m}\\
=&[x^{m}+x^{m-1}y+\cdots +xy^{m-1}+y^m]^*x^{m}\\
=&(x^m)^*x^m+[x^*(x^{m-1}y+\cdots +xy^{m-1}+y^m)]^*x^{m-1}\\
=&(x^m)^*x^m.
\end{array}$$ Accordingly, $[(a^m)^*a^{m+1}b]^*=[(x^m)^*x^m]^*=(x^m)^*x^m=(a^m)^*a^{m+1}b$, as required.

$(2)\Rightarrow (1)$ By hypothesis, there exists $b\in \mathcal{A}$ such that $$bab=b, a^2b=aba, [(a^m)^*a^{m+1}b]^*=(a^m)^*a^{m+1}b, ab\mathcal{A}=a^2b\mathcal{A}, a-a^2b\in \mathcal{A}^{qnil}.$$
Let $p=1-ab$. Then $p^2=p$ and $ap=a-a^2b\in \mathcal{A}^{qnil}.$
Clearly, $(a^m)^*a^{m}p=(a^m)^*a^m(1-ab)=(a^m)^*a^m-(a^m)^*a^{m+1}b$. Hence, $\big((a^m)^*a^{m}p\big)^*=(a^m)^*a^{m}p$.
Since $ab\mathcal{A}=a^2b\mathcal{A}$, we have $(1-p)\mathcal{A}=a(1-p)\mathcal{A}$.
Moreover, we see that $(1-p)a(1-p)=aba^2b, (1-p)b(1-p)=ab^2ab$ and $(aba^2b)(ab^2ab)=aba^2b^2=ababab=ab$.
Hence $$\begin{array}{c}
(1-p)a(1-p)\in [(1-p)\mathcal{A}(1-p)]_r^{-1},\\
\big((a^m)^*a^{m}p\big)^*=(a^m)^*a^{m}p, ap\in \mathcal{A}^{qnil}, (1-p)\mathcal{A}=a(1-p)\mathcal{A}.
\end{array}$$ Therefore $a\in \mathcal{A}_{r}^{\tiny\textcircled{g}_m}$ by Theorem 3.5.\end{proof}

\begin{cor} Let $a\in \mathcal{A}$. Then the following are equivalent:\end{cor}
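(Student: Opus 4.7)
The plan is to obtain this corollary as an immediate consequence of Theorem 3.7, following the template already set by Corollaries 2.6 and 2.9. Based on that pattern, the intended equivalence should characterize the (two-sided) $m$-weak group invertible elements $a\in\mathcal{A}^{\tiny\textcircled{W}_m}$ by the existence of $b\in\mathcal{A}$ satisfying the same polar-like identities $bab=b$, $a^2b^2=ab$, $[(a^m)^*a^{m+1}b]^*=(a^m)^*a^{m+1}b$, and $a-a^2b\in\mathcal{A}^{qnil}$ as in Theorem 3.7(2), with the one-sided ideal condition $ab\mathcal{A}=a^2b\mathcal{A}$ strengthened to a commutativity condition $ab=ba$ that reflects the two-sided nature of the $m$-weak group inverse.

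For the forward direction, I would set $b=a^{\tiny\textcircled{W}_m}$. Since an $m$-weak group inverse is in particular an $m$-generalized right group inverse, Theorem 3.7 instantly supplies every identity except commutativity. To close that last gap I would invoke the two-sided $m$-generalized group decomposition $a=x+y$ of Theorem 3.3, noting that in the weak-group-inverse setting the extra left annihilation $xy=0$ holds; then $b=x^{\#}$ and both $ab$ and $ba$ collapse to the group idempotent $xx^{\#}=x^{\#}x$, giving the required $ab=ba$.

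For the reverse direction, the commutativity $ab=ba$ together with $a^2b^2=ab$ gives $a^2b=a(ab)=a(ba)=(ab)a$ and hence the ideal equality $ab\mathcal{A}=a^2b\mathcal{A}$, so Theorem 3.7 already provides $a\in\mathcal{A}_r^{\tiny\textcircled{g}_m}$. To upgrade this to $a\in\mathcal{A}^{\tiny\textcircled{W}_m}$, I would set $p=1-ab=1-ba$ and observe that $p^2=p$ with $ap=pa$; invoking the quasi-polar characterization in Theorem 3.5, this two-sided commutation of $p$ with $a$ turns the right invertibility of $(1-p)a(1-p)$ into genuine invertibility in the corner, yielding Drazin invertibility of $a$ in addition to the $m$-generalized right group structure. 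Combining the two gives $a\in\mathcal{A}^{\tiny\textcircled{W}_m}$.

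The main obstacle will not be any new calculation but rather the bookkeeping that relates the one-sided ideal condition of Theorem 3.7 to the commutativity $ab=ba$ in the presence of the polar-like identities. I expect this to be routine once the spectral idempotent $p=1-ab$ is identified and one checks that commutation with $a$ follows from $ab=ba$, but the precise accounting between the one-sided decomposition used in Theorem 3.5 and the two-sided one needed for Drazin invertibility is the step that requires the most care.
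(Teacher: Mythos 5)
You have reconstructed the wrong statement, so the proof is aimed at a different target than the paper's corollary. The corollary in question (Corollary 3.8, immediately following Theorem 3.7) is simply the specialization $m=1$ of Theorem 3.7: it asserts that $a\in \mathcal{A}_{r}^{\tiny\textcircled{g}}$ (the \emph{one-sided} generalized right group inverse, not the two-sided $m$-weak group inverse) if and only if there exists $b\in\mathcal{A}$ with $bab=b$, $a^2b^2=ab$, $[a^*a^2b]^*=a^*a^2b$, $ab\mathcal{A}=a^2b\mathcal{A}$, and $a-a^2b\in\mathcal{A}^{qnil}$. The paper's entire proof is ``choose $m=1$ in Theorem 3.7.'' Your proposal instead invents a two-sided characterization of $\mathcal{A}^{\tiny\textcircled{W}_m}$ with the ideal condition $ab\mathcal{A}=a^2b\mathcal{A}$ replaced by $ab=ba$; that is not what the corollary claims, and nothing in Section 3 upgrades the right-sided structure to a two-sided one.

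Even judged on its own terms, your argument has gaps. In the forward direction you assert that ``in the weak-group-inverse setting the extra left annihilation $xy=0$ holds'' for the decomposition of Theorem 3.3; that theorem only provides $yx=0$ and $x^*a^{m-1}y=0$, and $xy=0$ is neither stated nor proved anywhere in the paper, so $ab=ba$ does not follow from what you have cited. In the reverse direction, the claim that commutation of $p=1-ab$ with $a$ ``turns the right invertibility of $(1-p)a(1-p)$ into genuine invertibility in the corner'' is unsubstantiated: a right inverse in a corner algebra does not become two-sided merely because the defining idempotent commutes with $a$, and Theorem 3.5 gives no such upgrade. The correct proof of the actual corollary requires none of this machinery --- it is a direct instance of Theorem 3.7 with $m=1$, since $(a^1)^*a^{1+1}b=a^*a^2b$.
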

\begin{enumerate}
\item [(1)] $a\in \mathcal{A}_{r}^{\tiny\textcircled{g}}$.
\item [(2)] There exists $b\in \mathcal{A}$ such that $$bab=b, a^2b^2=ab, [a^*a^2b]^*=a^*a^2b, ab\mathcal{A}=a^2b\mathcal{A}, a-a^2b\in \mathcal{A}^{qnil}.$$
\end{enumerate}
\begin{proof} This is proved by choosing $m=1$ in Theorem 3.7.\end{proof}

\section{representations using generalized right core inverses}

This section is to investigate algebraic properties of $m$-generalized right group inverse. We will present the representations of $m$-generalized right group inverse by using generalized right core inverses. We now derive

\begin{thm} Let $a\in \mathcal{A}_r^{\tiny\textcircled{d}}$. Then $a\in \mathcal{A}_r^{\tiny\textcircled{g}_m}$ and
$$a_r^{\tiny\textcircled{g}_m}=(a_r^d)^{m+1}aa_r^{\tiny\textcircled{d}}a^{m}=(a_r^daa_r^{\tiny\textcircled{d}})^{m+1}a^m.$$\end{thm}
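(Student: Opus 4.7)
The plan is to reduce to Theorem~2.4. Write $c=a_r^{\tiny\textcircled{d}}$ and $d=a_r^d$. I would begin by invoking the inclusion $\mathcal{A}_r^{\tiny\textcircled{d}}\subseteq \mathcal{A}_r^d$, established in \cite{CM2}; this makes condition~(1) of Theorem~2.4 automatic. Motivated by Theorem~2.4's conclusion $a_r^{\tiny\textcircled{g}_m}=(a_r^d)^{m+1}x$ and the shape of the claimed formula, the natural candidate to try is $x = aa_r^{\tiny\textcircled{d}}a^m$.

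The crucial input is the identity $(aca)\,d = d$. I would derive it from the asymptotic condition $\lim_{n\to\infty}\|a^n - aca^{n+1}\|^{1/n}=0$ via the Drazin telescoping $d = a^n d^{n+1}$ (itself a short induction using $ad^2=d$): since $(1-aca)\,d = (1-aca)\,a^n\,d^{n+1}$, the norm bound
\[
\|(1-aca)\,d\|^{1/n}\;\le\; \|(1-aca)\,a^n\|^{1/n}\cdot\|d\|^{(n+1)/n}\;\longrightarrow\;0
\]
forces $(1-aca)\,d=0$, i.e.\ $(aca)\,d=d$. Taking adjoints and using $(ac)^*=ac$ yields the dual form $(a_r^d)^*\,aa_r^{\tiny\textcircled{d}}=(a_r^d)^*$. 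Combining this absorption identity with $(aa_r^d)^2=aa_r^d$ and the power-rule $a\,(a_r^d)^{k+1}=(a_r^d)^k$, I would collapse $(aa_r^d)^*\,aa_r^d\cdot aa_r^{\tiny\textcircled{d}}\,a^m$ to $(aa_r^d)^*\,a^m$ in a short chain of rewrites. Theorem~2.4 then delivers $a\in\mathcal{A}_r^{\tiny\textcircled{g}_m}$ together with $a_r^{\tiny\textcircled{g}_m}=(a_r^d)^{m+1}\,aa_r^{\tiny\textcircled{d}}\,a^m$, the first expression in the statement.

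For the second expression, I would show by induction on $k\ge 0$ that
\[
(a_r^d\,aa_r^{\tiny\textcircled{d}})^{k+1}\;=\;(a_r^d)^{k+1}\,aa_r^{\tiny\textcircled{d}},
\]
the inductive step being a single application of $aa_r^{\tiny\textcircled{d}}\cdot a_r^d = a_r^d$, which follows from $(aca)\,d=d$ combined with $d=ad^2$. Setting $k=m$ and right-multiplying by $a^m$ then produces the claimed identity $(a_r^daa_r^{\tiny\textcircled{d}})^{m+1}a^m=(a_r^d)^{m+1}aa_r^{\tiny\textcircled{d}}a^m$, completing the proof.

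The principal obstacle is the middle step: the oblique projection $aa_r^d$ and the orthogonal projection $aa_r^{\tiny\textcircled{d}}$ do not commute, so the reduction must thread carefully between the absorption identity $(a_r^d)^*aa_r^{\tiny\textcircled{d}}=(a_r^d)^*$, the idempotency of $aa_r^d$, and the Drazin-type relations between $a$, $a^m$ and the powers of $a_r^d$. The single analytic ingredient throughout the argument is $(aca)\,d=d$; every subsequent manipulation is purely algebraic bookkeeping.
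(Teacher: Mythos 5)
Your plan is the paper's plan: take $x=aa_r^{\tiny\textcircled{d}}a^{m}$, verify condition (2) of Theorem~2.4, read off $a_r^{\tiny\textcircled{g}_m}=(a_r^d)^{m+1}x$, and then convert to $(a_r^daa_r^{\tiny\textcircled{d}})^{m+1}a^m$ by an absorption identity. The problem sits exactly in the step you defer to ``a short chain of rewrites,'' and it is not mere bookkeeping. Write $c=a_r^{\tiny\textcircled{d}}$, $d=a_r^d$. Your telescoping produces $(aca)d=d$, whose adjoint is $(ad)^*(ac)=d^*$ (not $(a_r^d)^*ac=(a_r^d)^*$ as stated). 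Feeding this into the target, after peeling off the inner projection via $ad\cdot ac=ac$ (a second absorption identity you also need but never derive; it comes from $a-ada\in\mathcal{A}^{qnil}$ and $c=a^nc^{n+1}$), you land on
$$(aa_r^d)^*(aa_r^d)(aa_r^{\tiny\textcircled{d}})a^m=(ad)^*(ac)\,a^m=(a_r^d)^*a^m,$$
whereas Theorem~2.4 demands $(aa_r^d)^*a^m=d^*a^*a^m$. These differ by a factor of $a^*$, so the verification does not close. The identity that actually makes the computation work is $aa_r^{\tiny\textcircled{d}}\,aa_r^d=aa_r^d$, i.e.\ $acad=ad$ rather than $acad=d$: then $(ad)^*(ac)=(ad)^*(ac)^*=(acad)^*=(ad)^*$ and the display collapses to $(aa_r^d)^*a^m$ as required. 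The same defect propagates to your second half: the inductive step needs $aa_r^{\tiny\textcircled{d}}a_r^d=a_r^d$, which follows from $acad=ad$ via $acd=ac(ad^2)=(acad)d=ad^2=d$, but from your $acad=d$ it gives $acd=d^2$, and the induction breaks.

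The source of the mismatch is the limit condition in the printed definition of $a_r^{\tiny\textcircled{d}}$. Taken literally, $\lim_n\|a^n-axa^{n+1}\|^{1/n}=0$ is degenerate (already for $a=2$, $x=\tfrac12$ in $\mathbb{C}$ it fails), and it is this literal reading that yields your $(aca)d=d$. The condition the authors actually use --- as their own chain $(aa_r^{\tiny\textcircled{d}}aa_r^d)^*a^m=(aa_r^d)^*a^m$ and Theorem~4.8 make clear --- is $\lim_n\|a^n-a_r^{\tiny\textcircled{d}}a^{n+1}\|^{1/n}=0$ (equivalently $\lim_n\|a^n-aa_r^{\tiny\textcircled{d}}a^{n}\|^{1/n}=0$), from which the identical telescoping against $d=a^nd^{n+1}$ gives $cad=d$ and hence $acad=ad$. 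So the repair is local: rerun your limit argument with $1-ca$ in place of $1-aca$; with $acad=ad$ and $adac=ac$ in hand, the rest of your outline, including the induction $(dac)^{k+1}=d^{k+1}ac$, goes through and coincides with the paper's proof.
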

\begin{proof} Set $x=aa_r^{\tiny\textcircled{d}}a^m$. Then we verify that
$$\begin{array}{rll}
(aa_r^d)^*(aa_r^d)x&=&(aa_r^d)^*(aa_r^d)aa_r^{\tiny\textcircled{d}}a^m\\
&=&(aa_r^d)^*aa_r^{\tiny\textcircled{d}}a^m\\
&=&(aa_r^d)^*(aa_r^{\tiny\textcircled{d}})^*a^m\\
&=&(aa_r^{\tiny\textcircled{d}}aa_r^d)^*a^m\\
&=&(aa_r^d)^*a^m.
\end{array}$$ According to Theorem 2.4, $a\in \mathcal{A}_r^{\tiny\textcircled{g}_m}$ and
$$a_r^{\tiny\textcircled{g}_m}=(a_r^d)^{m+1}x=(a_r^d)^{m+1}aa_r^{\tiny\textcircled{d}}a^{m}.$$

Moreover, we have $$(a_r^daa_r^{\tiny\textcircled{d}})^{m+1}a^m=(a_r^d)^{m+1}aa_r^{\tiny\textcircled{d}}a^{m},$$ as asserted.\end{proof}

\begin{cor} Let $a\in \mathcal{A}_r^{\tiny\textcircled{d}}$. Then $a_r^{\tiny\textcircled{g}_m}=x~\mbox{if and only if} ~ax^2=x,
ax=(a_r^daa^{\tiny\textcircled{d}})^{m}a^{m}.$\end{cor}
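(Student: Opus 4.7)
The plan is to deduce this corollary directly from Theorem 4.1, which provides the two closed forms $a_r^{\tiny\textcircled{g}_m} = (a_r^d)^{m+1} a a_r^{\tiny\textcircled{d}} a^m = (a_r^d a a_r^{\tiny\textcircled{d}})^{m+1} a^m$. Both implications should reduce to routine manipulations based on $a(a_r^d)^2 = a_r^d$ together with the compatibility identities between $a_r^d$ and $a_r^{\tiny\textcircled{d}}$ that are already employed implicitly in Theorem 4.1.

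For the forward direction, if $x = a_r^{\tiny\textcircled{g}_m}$, then $ax^2 = x$ holds by the definition of the $m$-generalized right group inverse. I would then multiply the closed form $x = (a_r^d)^{m+1} a a_r^{\tiny\textcircled{d}} a^m$ by $a$ on the left and simplify using $a(a_r^d)^{k+1} = (a_r^d)^k$ (which follows from $a(a_r^d)^2 = a_r^d$ by induction). This yields $ax = (a_r^d)^m a a_r^{\tiny\textcircled{d}} a^m$, and the identification $(a_r^d)^m a a_r^{\tiny\textcircled{d}} a^m = (a_r^d a a_r^{\tiny\textcircled{d}})^m a^m$, parallel to the equality of the two closed forms in Theorem 4.1, closes this direction.

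For the reverse direction, I would assume $ax^2 = x$ and set $e := ax = (a_r^d a a_r^{\tiny\textcircled{d}})^m a^m = (a_r^d)^m a a_r^{\tiny\textcircled{d}} a^m$. From $ax^2 = x$ I extract $x = (ax)x = ex$, whence $ax = aex$, so $e = aex$. Multiplying on the left by $a_r^d$ gives $a_r^d e = (a_r^d a e)\,x$. If I can establish the auxiliary identity $a_r^d a e = e$, then $a_r^d e = ex = x$, and Theorem 4.1 identifies $a_r^d e = (a_r^d)^{m+1} a a_r^{\tiny\textcircled{d}} a^m = a_r^{\tiny\textcircled{g}_m}$, which delivers $x = a_r^{\tiny\textcircled{g}_m}$.

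The main obstacle is the auxiliary identity $a_r^d a e = e$, that is, $a_r^d a (a_r^d)^m a a_r^{\tiny\textcircled{d}} a^m = (a_r^d)^m a a_r^{\tiny\textcircled{d}} a^m$. For $m \ge 2$ this reduces to $a(a_r^d)^m = (a_r^d)^{m-1}$, a direct consequence of $a(a_r^d)^2 = a_r^d$. For $m = 1$ it collapses to $a_r^d (a a_r^d)(a a_r^{\tiny\textcircled{d}}) a = a_r^d a a_r^{\tiny\textcircled{d}} a$, which relies on the compatibility relation $a a_r^d \cdot a a_r^{\tiny\textcircled{d}} = a a_r^{\tiny\textcircled{d}}$ already invoked in the proof of Theorem 4.1.
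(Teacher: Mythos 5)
Your proposal is correct and follows essentially the same route as the paper: both directions rest on the closed forms $a_r^{\tiny\textcircled{g}_m}=(a_r^d)^{m+1}aa_r^{\tiny\textcircled{d}}a^{m}=(a_r^daa_r^{\tiny\textcircled{d}})^{m+1}a^m$ from Theorem 4.1, and your reverse direction (left-multiplying $e=aex$ by $a_r^d$ and using $a_r^dae=e$) is just a slightly more economical packaging of the paper's telescoping substitution $x=(ax)x=[(a_r^daa_r^{\tiny\textcircled{d}})^{m}a^{m-1}](ax)=\cdots$. The auxiliary identities you invoke, namely $a(a_r^d)^{k+1}=(a_r^d)^k$ and $aa_r^d\cdot aa_r^{\tiny\textcircled{d}}=aa_r^{\tiny\textcircled{d}}$, are exactly the ones the paper uses implicitly in Theorem 4.1, so nothing is missing.
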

\begin{proof} $\Longrightarrow $ In view of Theorem 4.1, $a\in \mathcal{A}_r^{\tiny\textcircled{g}_m}$ and
$x:=a_r^{\tiny\textcircled{g}_m}=(a_r^daa_r^{\tiny\textcircled{d}})^{m+1}a^m$.
Therefore $ax^2=x$ and $ax=a(a_r^daa_r^{\tiny\textcircled{d}})^{m+1}a^m=(a_r^daa^{\tiny\textcircled{d}})^{m}a^{m}$, as required.

$\Longleftarrow $ By hypothesis, $ax^2=x, ax=(a_r^daa^{\tiny\textcircled{d}})^{m}a^{m}$. Then we have
$$\begin{array}{rll}
x&=&ax^2=(ax)x=[(a_r^daa^{\tiny\textcircled{d}})^{m}a^{m}]x\\
&=&[(a_r^daa^{\tiny\textcircled{d}})^{m}a^{m-1}](ax)\\
&=&[(a_r^daa^{\tiny\textcircled{d}})^{m}a^{m-1}][(a_r^daa^{\tiny\textcircled{d}})^{m}a^{m}]\\
&=&(a_r^daa^{\tiny\textcircled{d}})^{m}a^{m-2}(aa_r^da)a^{\tiny\textcircled{d}}(a_r^daa^{\tiny\textcircled{d}})^{m-1}a^{m}\\
&=&(a_r^daa^{\tiny\textcircled{d}})^{m}a^{m-2}[aa^{\tiny\textcircled{d}}](a_r^daa^{\tiny\textcircled{d}})^{m-1}a^{m}\\
&=&(a_r^daa^{\tiny\textcircled{d}})^{m}a^{m-2}(a_r^daa^{\tiny\textcircled{d}})^{m-1}a^{m}\\
&=&(a_r^daa^{\tiny\textcircled{d}})^{m}a^{m-3}(a_r^daa^{\tiny\textcircled{d}})^{m-2}a^{m}\\
&\vdots&\\
&=&(a_r^daa^{\tiny\textcircled{d}})^{m}a_r^daa^{\tiny\textcircled{d}}a^{m}\\
&=&(a_r^daa_r^{\tiny\textcircled{d}})^{m+1}a^m.
\end{array}$$ By virtue of Theorem 4.1,
$x=a_r^{\tiny\textcircled{g}_m}$, as desired.\end{proof}

To illustrate Theorem 4.1, we present the following example.

\begin{exam} The space ${\ell}^2({\Bbb N})$ is a Hilbert space consisting of all square-summable infinite sequences of complex numbers,
let $S_n$ be defined on ${\ell}^2({\Bbb N})$ by:
$S_n(x_1,x_2,x_3,\cdots )=(\underbrace{0,0,\cdots ,0,}_{n} x_1,x_2,x_3,\cdots )$. $L_n$ is defined
on ${\ell}^2({\Bbb N})$ by:
$L_n(x_1,x_2,x_3,\cdots )=(x_{n+1},x_{n+2},x_{n+3},\cdots )$.
Then $L_nS_n=I,$ while $S_nL_n\neq I.$
Then the left shift operator $L_1$ has generalized right core inverse. Let $m\in {\Bbb N}$. By virtue of Theorem 4.1, $L_1$ has a $m$-generalized right core inverse and $$(L_1)_r^{\tiny\textcircled{g}_m}=S_{m+1}L_m,$$ which is given by
$(L_1)_r^{\tiny\textcircled{g}_m}(x_1,x_2,x_3,\cdots )=(\underbrace{0,0,\cdots ,0,}_{m+1} x_{m+1},x_{m+2},x_{m+3},\cdots )$.
\end{exam}

\begin{thm} Let $a\in \mathcal{A}_r^{\tiny\textcircled{d}}$. Then
$$a_r^{\tiny\textcircled{g}_m}=(a_r^d)^{m+2}(a_r^d)_r^{\tiny\textcircled{\#}}a^{m}.$$\end{thm}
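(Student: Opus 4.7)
The plan is to reduce the statement to Theorem 4.1, which already gives $a_r^{\tiny\textcircled{g}_m}=(a_r^d)^{m+1}aa_r^{\tiny\textcircled{d}}a^m$. Thus it suffices to establish the purely algebraic identity
\[
(a_r^d)^{m+1}\,a\,a_r^{\tiny\textcircled{d}}=(a_r^d)^{m+2}\,(a_r^d)_r^{\tiny\textcircled{\#}},
\]
which in turn follows from the single intertwining relation $a_r^d\,(a_r^d)_r^{\tiny\textcircled{\#}}=a\,a_r^{\tiny\textcircled{d}}$ after peeling off $(a_r^d)^{m+1}$ on the left.

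To produce the right core inverse of $a_r^d$ under the hypothesis $a\in\mathcal{A}_r^{\tiny\textcircled{d}}$, I would first note the structural facts that $aa_r^d$ is an idempotent (an easy consequence of $a(a_r^d)^2=a_r^d$ combined with $a^2a_r^d=aa_r^da$), and that $aa_r^{\tiny\textcircled{d}}$ is a Hermitian idempotent with the same range as $aa_r^d$. The natural candidate for $(a_r^d)_r^{\tiny\textcircled{\#}}$ is then an element built from a group-like inverse of $a_r^d$ (which exists because $a_r^d$ acts, on its own range, like an invertible element, $a(a_r^d)^2=a_r^d$ already providing one of the group-inverse-type identities) composed with the Hermitian projection $aa_r^{\tiny\textcircled{d}}$. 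I would exhibit this element explicitly and verify the three defining equations of the right core inverse using only $a(a_r^d)^2=a_r^d$, $a^2a_r^d=aa_r^da$, $a(a_r^{\tiny\textcircled{d}})^2=a_r^{\tiny\textcircled{d}}$ and $(aa_r^{\tiny\textcircled{d}})^*=aa_r^{\tiny\textcircled{d}}$, together with the quasinilpotent remainder conditions.

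The intertwining $a_r^d\,(a_r^d)_r^{\tiny\textcircled{\#}}=a\,a_r^{\tiny\textcircled{d}}$ then drops out of uniqueness of the Hermitian projection onto a given range: both sides are Hermitian idempotents whose range is $a_r^d\mathcal{A}=aa_r^d\mathcal{A}=aa_r^{\tiny\textcircled{d}}\mathcal{A}$, so they must coincide. Substituting this back gives
\[
(a_r^d)^{m+2}(a_r^d)_r^{\tiny\textcircled{\#}}a^m=(a_r^d)^{m+1}\bigl(a_r^d\,(a_r^d)_r^{\tiny\textcircled{\#}}\bigr)a^m=(a_r^d)^{m+1}aa_r^{\tiny\textcircled{d}}a^m=a_r^{\tiny\textcircled{g}_m},
\]
invoking Theorem 4.1 for the last equality.

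The main obstacle is the construction and verification of $(a_r^d)_r^{\tiny\textcircled{\#}}$: because $a$ and $a_r^d$ need not commute in the purely one-sided setting, one cannot freely permute them, and the computation must be arranged so that each use of the identity $a(a_r^d)^2=a_r^d$ is applied on the correct side. Checking that the chosen candidate, when multiplied by $a_r^d$ on the left, really produces the Hermitian idempotent $aa_r^{\tiny\textcircled{d}}$ is the delicate step that ties the whole argument together.
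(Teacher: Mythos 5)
Your reduction is the right one, and it is in fact the same skeleton the paper uses: by Theorem 4.1 everything comes down to the identity $a_r^d(a_r^d)_r^{\tiny\textcircled{\#}}=aa_r^{\tiny\textcircled{d}}$, which the paper obtains instantly from the cited formula $a_r^{\tiny\textcircled{d}}=(a_r^d)^2(a_r^d)_r^{\tiny\textcircled{\#}}$ of \cite[Theorem 3.5]{CM2} together with $a(a_r^d)^2=a_r^d$, and then collapses $(a_r^daa_r^{\tiny\textcircled{d}})^{m+1}a^m$ using $a_r^d(a_r^d)_r^{\tiny\textcircled{\#}}a_r^d=a_r^d$. Your auxiliary lemma is also fine: in a $*$-algebra, two Hermitian idempotents $p,q$ with $p\mathcal{A}=q\mathcal{A}$ satisfy $pq=q$ and $qp=p$, whence $p=qp=(pq)^*=q^*=q$.

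The gap is that the two claims carrying all the weight are only announced. First, you never exhibit $(a_r^d)_r^{\tiny\textcircled{\#}}$ or verify the three right-core-inverse equations for it; the existence of this element under the hypothesis $a\in\mathcal{A}_r^{\tiny\textcircled{d}}$ is precisely the nontrivial content that the paper imports from \cite{CM2}. Second, and fatally for the argument as written, the range chain $a_r^d\mathcal{A}=aa_r^d\mathcal{A}=aa_r^{\tiny\textcircled{d}}\mathcal{A}$ contains a link that is simply not a formal consequence of the defining equations in the one-sided setting: for the left shift $L$ on $\ell^2(\mathbb{N})$ with right shift $S$ one has $LS^2=S$, $L^2S=LSL=L$, $L-LSL=0$, so $S$ is a generalized right Drazin inverse of $L$, yet $LS=I$ gives $aa_r^d\mathcal{A}=\mathcal{A}\neq S\mathcal{A}=a_r^d\mathcal{A}$. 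Since $a$ and $a_r^d$ need not commute, the equality your uniqueness argument actually requires, namely $a_r^d\mathcal{A}=aa_r^{\tiny\textcircled{d}}\mathcal{A}$ (the left side being the range of the Hermitian idempotent $a_r^d(a_r^d)_r^{\tiny\textcircled{\#}}$), is essentially equivalent to the relation $aa_r^{\tiny\textcircled{d}}=a_r^d(a_r^d)_r^{\tiny\textcircled{\#}}$ you are trying to prove; establishing it honestly amounts to reproving \cite[Theorem 3.5]{CM2}. Either cite that result, as the paper does, or supply the explicit candidate for $(a_r^d)_r^{\tiny\textcircled{\#}}$ together with a direct proof of the range equality.
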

\begin{proof} By virtue of~\cite[Theorem 3.5]{CM2}, we have
$$a_r^{\tiny\textcircled{d}}=(a_r^d)^2(a_r^d)^{\tiny\textcircled{\#}}.$$ According to Theorem 4.1, we derive that
$$\begin{array}{rll}
a_r^{\tiny\textcircled{g}_m}&=&(a_r^daa_r^{\tiny\textcircled{d}})^{m+1}a^m\\
&=&[a_r^da(a_r^d)^2(a_r^d)^{\tiny\textcircled{\#}}]^{m+1}a^m\\
&=&(a_r^d)^2(a_r^d)^{\tiny\textcircled{\#}}[a_r^da(a_r^d)^2(a_r^d)^{\tiny\textcircled{\#}}]^{m}a^m\\
&=&(a_r^d)^3(a_r^d)^{\tiny\textcircled{\#}}[a_r^da(a_r^d)^2(a_r^d)^{\tiny\textcircled{\#}}]^{m-1}a^m\\
&\vdots& \\
&=&(a_r^d)^{m+2}(a_r^d)_r^{\tiny\textcircled{\#}}a^{m},
\end{array}$$ as asserted.\end{proof}

\begin{cor} Let $a\in \mathcal{A}^{\tiny\textcircled{D}}$. Then
$$a^{\tiny\textcircled{W}_m}=(a^D)^{2}(a^D)^{\tiny\textcircled{\#}}.$$\end{cor}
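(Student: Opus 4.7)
The plan is to derive Corollary 4.5 as an immediate specialization of Theorem 4.4 to the two-sided (Drazin) setting. When $a\in\mathcal{A}^{\tiny\textcircled{D}}$, the element $a$ is in particular generalized right Drazin invertible and its generalized right Drazin inverse $a_r^d$ coincides with the ordinary Drazin inverse $a^D$. In the two-sided setting the $m$-generalized right group inverse $a_r^{\tiny\textcircled{g}_m}$ becomes the $m$-weak group inverse $a^{\tiny\textcircled{W}_m}$, and the one-sided object $(a_r^d)_r^{\tiny\textcircled{\#}}$ reduces to $(a^D)^{\tiny\textcircled{\#}}$. So the corollary should drop out of Theorem 4.4 by nothing more than renaming these ingredients.

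Concretely, Theorem 4.4 applied to $a$ delivers
\[
a^{\tiny\textcircled{W}_m}=(a^D)^{m+2}(a^D)^{\tiny\textcircled{\#}}a^{m},
\]
and to recast this in the compact shape appearing in the statement I would exploit two standard facts about the Drazin inverse: first, $a^D$ commutes with every power of $a$, and so does $(a^D)^{\tiny\textcircled{\#}}$ (being a polynomial expression in $a^D$); second, since $a^D a = aa^D$ is idempotent, the telescoping identity $(a^D)^k a^k = a^D a$ holds for every $k\ge 1$. Sliding $a^m$ to the left past $(a^D)^{\tiny\textcircled{\#}}$ and collapsing $(a^D)^m a^m$ to $a^D a$ yields
\[
(a^D)^{m+2}(a^D)^{\tiny\textcircled{\#}}a^{m}=(a^D)^2 (a^D a)(a^D)^{\tiny\textcircled{\#}},
\]
and absorbing the idempotent $a^D a$ against $(a^D)^{\tiny\textcircled{\#}}$ produces the claimed expression.

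The main obstacle is the final absorption step $(a^D a)(a^D)^{\tiny\textcircled{\#}}=(a^D)^{\tiny\textcircled{\#}}$, which is the algebraic heart of why the one-sided formula of Theorem 4.4 collapses so cleanly in the two-sided case. I would justify it from the defining relations of the $\tiny\textcircled{\#}$-inverse of $a^D$, observing that $a^D a$ acts as the identity on the left-support of $(a^D)^{\tiny\textcircled{\#}}$; this is exactly the same mechanism used in \cite[Theorem 3.5]{CM2} to write $a_r^{\tiny\textcircled{d}}=(a_r^d)^2(a_r^d)^{\tiny\textcircled{\#}}$, which was already invoked at the start of the proof of Theorem 4.4. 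With this identity in hand, the stated formula $a^{\tiny\textcircled{W}_m}=(a^D)^{2}(a^D)^{\tiny\textcircled{\#}}$ follows from Theorem 4.4 with no further work.
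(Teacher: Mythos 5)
Your first step --- reading off from Theorem 4.4 that $a^{\tiny\textcircled{W}_m}=(a^D)^{m+2}(a^D)^{\tiny\textcircled{\#}}a^{m}$ in the two-sided case --- is exactly what the paper does (its proof is a one-line appeal to Theorem 4.4). The gap is in the further simplification you attempt. The claim that $(a^D)^{\tiny\textcircled{\#}}$ commutes with powers of $a$ because it is ``a polynomial expression in $a^D$'' is false: the right core inverse is defined through the involution (it must satisfy $\bigl(a^D x\bigr)^*=a^D x$), it is not a polynomial in $a^D$, and in general it commutes with neither $a^D$ nor $a$. Without that commutation you cannot slide $a^m$ to the left past $(a^D)^{\tiny\textcircled{\#}}$, so the identity $(a^D)^{m+2}(a^D)^{\tiny\textcircled{\#}}a^{m}=(a^D)^{2}(a^D)^{\tiny\textcircled{\#}}$ is not established. (Your final absorption step $(a^Da)(a^D)^{\tiny\textcircled{\#}}=(a^D)^{\tiny\textcircled{\#}}$ is fine; the commutation is the fatal one.)

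Indeed the identity is not merely unproved but false, so no repair of this route exists. Take $a=\left(\begin{smallmatrix}1&1\\0&0\end{smallmatrix}\right)\in{\Bbb C}^{2\times 2}$: then $a$ is idempotent, $a^D=a$, and $(a^D)^{\tiny\textcircled{\#}}=a^{\tiny\textcircled{\#}}=\left(\begin{smallmatrix}1&0\\0&0\end{smallmatrix}\right)$, which does not commute with $a$. One checks $(a^D)^{m+2}(a^D)^{\tiny\textcircled{\#}}a^{m}=a=a^{\#}=a^{\tiny\textcircled{W}_m}$, whereas $(a^D)^{2}(a^D)^{\tiny\textcircled{\#}}=\left(\begin{smallmatrix}1&0\\0&0\end{smallmatrix}\right)\neq a$. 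In fact $(a^D)^{2}(a^D)^{\tiny\textcircled{\#}}$ is the core-EP inverse $a^{\tiny\textcircled{D}}$ (the two-sided case of \cite[Theorem 3.5]{CM2}), which differs from $a^{\tiny\textcircled{W}_m}$ in general. So the corollary as printed appears to have lost the factors $(a^D)^m\cdots a^m$; the honest specialization of Theorem 4.4 is $a^{\tiny\textcircled{W}_m}=(a^D)^{m+2}(a^D)^{\tiny\textcircled{\#}}a^{m}$, and your instinct that a reduction was needed to match the printed formula was right, but the formula itself is what needs correcting.
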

\begin{proof} This is immediate by Theorem 4.4.\end{proof}

Recall that an element $a\in \mathcal{A}$ has right core inverse if there exist $x\in \mathcal{A}$ such that $$ax^2=x, (ax)^*=ax, axa=a.$$ If such $x$ exists, it is unique, and denote it by $a_r^{\tiny\textcircled{\#}}$. We refer the reader more properties of right core in ~\cite{WM}.

\begin{thm} Let $a\in \mathcal{A}_r^{\tiny\textcircled{d}}$. Then
$$a_r^{\tiny\textcircled{g}_m}=[a_r^da^m(a^{m+1}a_r^{\tiny\textcircled{\#}})_r^{\tiny\textcircled{\#}}]^{m+1}a^m.$$\end{thm}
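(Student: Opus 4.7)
The plan is to reduce the target formula to the representation already established in Theorem 4.1, namely
$$a_r^{\tiny\textcircled{g}_m}=(a_r^d\, a\, a_r^{\tiny\textcircled{d}})^{m+1}a^m.$$
Matching this representation against the asserted identity term-by-term reduces the problem to the single intermediate identity
$$a_r^d\, a^m\,(a^{m+1}a_r^{\tiny\textcircled{\#}})_r^{\tiny\textcircled{\#}}=a_r^d\, a\, a_r^{\tiny\textcircled{d}},$$
after which raising both sides to the $(m+1)$-st power and right-multiplying by $a^m$ yields the desired formula.

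To establish the intermediate identity I would first obtain a closed form for $(a^{m+1}a_r^{\tiny\textcircled{\#}})_r^{\tiny\textcircled{\#}}$. The natural candidate is $(a_r^{\tiny\textcircled{\#}})^m$, which can be verified by checking the three defining conditions of the right core inverse in turn. The key lemma is the iterated identity $a^k(a_r^{\tiny\textcircled{\#}})^{k+1}=a_r^{\tiny\textcircled{\#}}$, which follows by induction on $k$ from $a(a_r^{\tiny\textcircled{\#}})^2=a_r^{\tiny\textcircled{\#}}$. Applying it with $k=m$, combined with $aa_r^{\tiny\textcircled{\#}} a=a$ and the self-adjointness of $aa_r^{\tiny\textcircled{\#}}$, yields $(a^{m+1}a_r^{\tiny\textcircled{\#}})\,(a_r^{\tiny\textcircled{\#}})^m=aa_r^{\tiny\textcircled{\#}}$, which is a Hermitian idempotent and so settles both the idempotency and the self-adjointness requirements in one stroke; the reproducing condition $c(c_r^{\tiny\textcircled{\#}})^2=c_r^{\tiny\textcircled{\#}}$ then follows from another application of the iterated identity.

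With $(a^{m+1}a_r^{\tiny\textcircled{\#}})_r^{\tiny\textcircled{\#}}=(a_r^{\tiny\textcircled{\#}})^m$ in hand, the intermediate identity reduces to $a_r^d\, a^m\,(a_r^{\tiny\textcircled{\#}})^m=a_r^d\, a\, a_r^{\tiny\textcircled{d}}$. For this final step I would invoke the representation $a_r^{\tiny\textcircled{d}}=(a_r^d)^2(a_r^d)_r^{\tiny\textcircled{\#}}$ from Theorem 3.5 of reference CM2, together with the auxiliary identity $a^m(a_r^{\tiny\textcircled{\#}})^m=aa_r^{\tiny\textcircled{\#}}$ (a corollary of the above iteration) and the generalized right Drazin identities $a(a_r^d)^2=a_r^d$ and $a^{k+1}a_r^d=aa_r^d\,a^k$, which follows by iteration from $a^2 a_r^d=aa_r^d\,a$. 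The principal obstacle I anticipate is the combinatorial bookkeeping involved in collapsing alternating products of the four distinct elements $a$, $a_r^d$, $a_r^{\tiny\textcircled{d}}$, $a_r^{\tiny\textcircled{\#}}$, since the one-sided nature of the defining relations restricts which cancellations are permissible; an additional subtlety is that the hypothesis $a\in\mathcal{A}_r^{\tiny\textcircled{d}}$ alone does not a priori supply the element $a_r^{\tiny\textcircled{\#}}$, so the statement implicitly presupposes that $a$ also admits a right core inverse in the sense of the definition recalled just before the theorem.
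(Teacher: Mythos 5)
Your overall skeleton matches the paper's: both arguments reduce the formula to the representation $a_r^{\tiny\textcircled{g}_m}=(a_r^daa_r^{\tiny\textcircled{d}})^{m+1}a^m$ of Theorem 4.1 and then identify the right core inverse of the auxiliary element $a^{m+1}(\cdot)$ as an $m$-th power. But you have attached the wrong inverse to that auxiliary element, and this creates a genuine gap. The paper's proof works throughout with $a^{m+1}a_r^{\tiny\textcircled{d}}$, i.e.\ with the \emph{generalized} right core inverse that the hypothesis $a\in\mathcal{A}_r^{\tiny\textcircled{d}}$ actually supplies, and verifies directly that $(a^{m+1}a_r^{\tiny\textcircled{d}})_r^{\tiny\textcircled{\#}}=(a_r^{\tiny\textcircled{d}})^m$; the symbol $a_r^{\tiny\textcircled{\#}}$ in the inner position of the displayed formula is a notational slip for $a_r^{\tiny\textcircled{d}}$. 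Once this is seen, the substitution into Theorem 4.1 is a one-line consequence of $a^m(a_r^{\tiny\textcircled{d}})^m=aa_r^{\tiny\textcircled{d}}$, and no comparison between two different inverses is ever needed.

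By instead taking $a_r^{\tiny\textcircled{\#}}$ to be the right core inverse of $a$ itself, you (i) import a hypothesis the theorem does not grant, as you yourself note, and (ii) are forced at the last step to prove $a_r^d\,a\,a_r^{\tiny\textcircled{\#}}=a_r^d\,a\,a_r^{\tiny\textcircled{d}}$, an identity relating two \emph{different} generalized inverses of $a$. You defer this to ``combinatorial bookkeeping,'' but it is not a formal consequence of the defining equations: generalized right core inverses are one-sided and need not be unique, so even when $a_r^{\tiny\textcircled{\#}}$ exists it need not coincide with a chosen $a_r^{\tiny\textcircled{d}}$, and nothing in your sketch bridges the two. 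This unproven identity is the actual crux under your reading, so the proposal as written does not close. A secondary caution: your ``one stroke'' verification of $cxc=c$ leans on $aa_r^{\tiny\textcircled{\#}}a=a$; in the paper's setting the corresponding identity $aa_r^{\tiny\textcircled{d}}a^{m+1}a_r^{\tiny\textcircled{d}}=a^{m+1}a_r^{\tiny\textcircled{d}}$ is \emph{not} available in that form and has to be extracted from the limit condition $\lim_{n\to\infty}\|a^n-aa_r^{\tiny\textcircled{d}}a^n\|^{1/n}=0$ by the spectral-radius argument used elsewhere in the paper, so that part of your plan would also need to be reworked under the correct reading.
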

\begin{proof} We easily verifies that
$$\begin{array}{rll}
(a^{m+1}a_r^{\tiny\textcircled{d}})(a_r^{\tiny\textcircled{d}})^m&=&a^{m+1}(a_r^{\tiny\textcircled{d}})^{m+1}=aa_r^{\tiny\textcircled{d}},\\
(a^{m+1}a_r^{\tiny\textcircled{d}})[(a_r^{\tiny\textcircled{d}})^m]^2&=&aa_r^{\tiny\textcircled{d}}(a_r^{\tiny\textcircled{d}})^m=(a_r^{\tiny\textcircled{d}})^m,\\
(a^{m+1}a_r^{\tiny\textcircled{d}})(a_r^{\tiny\textcircled{d}})^m(a^{m+1}a_r^{\tiny\textcircled{d}})&=&a^{m+1}a_r^{\tiny\textcircled{d}}.
\end{array}$$
Hence, $$[a^{m+1}a_r^{\tiny\textcircled{\#}}]_r^{\tiny\textcircled{\#}}=(a_r^{\tiny\textcircled{d}})^m.$$ By virtue of Theorem 4.1, we derive that
$$\begin{array}{rll}
a_r^{\tiny\textcircled{g}_m}&=&(a_r^daa_r^{\tiny\textcircled{d}})^{m+1}a^m\\
&=&[a_r^da^m(a_r^{\tiny\textcircled{d}})^m]^{m+1}a^m\\
&=&[a_r^da^m(a^{m+1}a_r^{\tiny\textcircled{\#}})_r^{\tiny\textcircled{\#}}]^{m+1}a^m,
\end{array}$$ as asserted.\end{proof}

\begin{cor} Let $a\in \mathcal{A}^{\tiny\textcircled{d}}$. Then
$$a^{\tiny\textcircled{g}}=[(a^2a^{\tiny\textcircled{\#}})^{\tiny\textcircled{\#}}]^2a.$$\end{cor}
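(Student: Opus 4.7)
The plan is to specialize Theorem 4.6 to $m=1$ and then absorb the leading factor $a^d a$ using the two-sided hypothesis $a \in \mathcal{A}^{\tiny\textcircled{d}}$. First I would apply Theorem 4.6 with $m=1$; in the two-sided setting this reads
$$a^{\tiny\textcircled{g}} = [a^d a\,(a^2 a^{\tiny\textcircled{\#}})^{\tiny\textcircled{\#}}]^2\, a.$$
The intermediate identity $[a^{m+1} a^{\tiny\textcircled{\#}}]^{\tiny\textcircled{\#}} = (a^{\tiny\textcircled{d}})^m$ established inside the proof of Theorem 4.6 specializes at $m=1$ to $(a^2 a^{\tiny\textcircled{\#}})^{\tiny\textcircled{\#}} = a^{\tiny\textcircled{d}}$, so the above reduces to $a^{\tiny\textcircled{g}} = [a^d a \cdot a^{\tiny\textcircled{d}}]^2 a$.

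It then remains to verify $a^d a \cdot a^{\tiny\textcircled{d}} = a^{\tiny\textcircled{d}}$. For this I would use the representation $a^{\tiny\textcircled{d}} = (a^d)^2 (a^d)^{\tiny\textcircled{\#}}$ (the two-sided analogue of the formula that appears in the proof of Theorem 4.4) together with the Drazin identity $a(a^d)^2 = a^d$. A direct computation gives
$$a^d a \cdot a^{\tiny\textcircled{d}} = a^d\,[a(a^d)^2]\,(a^d)^{\tiny\textcircled{\#}} = a^d\cdot a^d\cdot (a^d)^{\tiny\textcircled{\#}} = (a^d)^2 (a^d)^{\tiny\textcircled{\#}} = a^{\tiny\textcircled{d}},$$
which, plugged into the previous display, yields $a^{\tiny\textcircled{g}} = [a^{\tiny\textcircled{d}}]^2 a = [(a^2 a^{\tiny\textcircled{\#}})^{\tiny\textcircled{\#}}]^2 a$, as asserted.

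The step I expect to be least automatic is the absorption of the $a^d a$ factor: in the one-sided setting $a_r^d a$ does not in general interact nicely with powers of the generalized core inverse, so this simplification is precisely where the two-sided hypothesis $a \in \mathcal{A}^{\tiny\textcircled{d}}$ (rather than merely $a \in \mathcal{A}_r^{\tiny\textcircled{d}}$) is used; the rest is a bookkeeping specialization of results proved earlier in the section.
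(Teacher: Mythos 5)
Your proof is correct and follows the same route as the paper, which simply cites Theorem 4.6 at $m=1$; you usefully supply the absorption step $a^daa^{\tiny\textcircled{d}}=a^{\tiny\textcircled{d}}$ (via $a^{\tiny\textcircled{d}}=(a^d)^2(a^d)^{\tiny\textcircled{\#}}$ and $a(a^d)^2=a^d$) that the paper's one-line proof leaves implicit. One small quibble with your closing remark: that absorption is not really where two-sidedness enters, since $a(a_r^d)^2=a_r^d$ is already one of the defining equations of the generalized right Drazin inverse, so the same computation goes through one-sidedly; the hypothesis $a\in\mathcal{A}^{\tiny\textcircled{d}}$ is needed only so that the two-sided inverses appearing in the statement exist.
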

\begin{proof} This is obvious by Theorem 4.6.\end{proof}

We are ready to prove:

\begin{thm} Let $a\in \mathcal{A}_r^{\tiny\textcircled{d}}$ and $x\in \mathcal{A}$. Then the following are equivalent:\end{thm}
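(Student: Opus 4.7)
The plan is to characterize $x = a_r^{\tiny\textcircled{g}_m}$ through several equivalent equational descriptions built from the generalized right Drazin inverse $a_r^d$ and the generalized right core inverse $a_r^{\tiny\textcircled{d}}$. The reference point will be the explicit formula established in Theorem 4.1, namely $a_r^{\tiny\textcircled{g}_m}=(a_r^daa_r^{\tiny\textcircled{d}})^{m+1}a^m=(a_r^d)^{m+1}aa_r^{\tiny\textcircled{d}}a^m$, together with the two-equation characterization $ax^2=x$ and $ax=(a_r^daa_r^{\tiny\textcircled{d}})^{m}a^{m}$ furnished by Corollary 4.2.

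To prove the forward implications (that $x = a_r^{\tiny\textcircled{g}_m}$ satisfies each listed condition), I would substitute the formula from Theorem 4.1 and simplify, repeatedly invoking the defining identities $aa_r^d a=aa_r^d \cdot a$, $a_r^d=a_r^d a a_r^d$, $a a_r^{\tiny\textcircled{d}} a=a$ or its weakened $\lim$-analogue, and $(aa_r^{\tiny\textcircled{d}})^*=aa_r^{\tiny\textcircled{d}}$. The absorption identity $aa_r^{\tiny\textcircled{d}}\cdot aa_r^d = aa_r^d$ (and its transpose $aa_r^d \cdot aa_r^{\tiny\textcircled{d}}$, which equals $aa_r^{\tiny\textcircled{d}}$ when one applies the Hermitian property) will be the workhorse letting me collapse the iterated products $(a_r^daa_r^{\tiny\textcircled{d}})^k$ into $(a_r^d)^k(aa_r^{\tiny\textcircled{d}})$ or similar concise forms. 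For the converse directions, I would start from each candidate condition and aim to derive exactly the pair $ax^2=x$ and $ax=(a_r^daa_r^{\tiny\textcircled{d}})^{m}a^{m}$, at which point Corollary 4.2 concludes $x=a_r^{\tiny\textcircled{g}_m}$ in one stroke.

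Concretely, for any condition of the shape $ax^2=x$ together with an identity expressing $ax$ in mixed powers of $a$, $a_r^d$, and $a_r^{\tiny\textcircled{d}}$ (for instance an identity of the form $ax=a^{k}(a_r^d)^{k}aa_r^{\tiny\textcircled{d}}a^{m}$ or $ax=(a_r^d)^{m}a^{m+1}a_r^{\tiny\textcircled{d}}a^{m-1}$), the strategy is to pre- or post-multiply by a suitable power of $a$ or $a_r^d$ and fold using $a a_r^d a = a$ up to a quasi-nilpotent remainder. The telescoping induction argument I have in mind is exactly the one already executed inside the proof of Corollary 4.2, where $x=ax^2$ is iterated to propagate the identity $ax=\cdots$ into a self-reproducing formula for $x$ itself.

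The main obstacle will be the careful bookkeeping of the mixed $a_r^d$--$a_r^{\tiny\textcircled{d}}$ products: these two inverses do not in general commute with each other nor with $a$ in a clean way, and only specific reduction rules (idempotency of $aa_r^d$ and $aa_r^{\tiny\textcircled{d}}$, Hermiticity of $aa_r^{\tiny\textcircled{d}}$, and the fact that $aa_r^{\tiny\textcircled{d}}$ absorbs $aa_r^d$ on one side) permit collapses. I expect the argument to hinge on proving a lemma-like identity such as $(a_r^daa_r^{\tiny\textcircled{d}})^{m+1}a^m = (a_r^d)^{m+1}aa_r^{\tiny\textcircled{d}}a^m$, which is already stated in Theorem 4.1 but whose iterative use in both directions is the substantive work. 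Once those reductions are in hand, each equivalence in the list reduces to a short manipulation followed by an appeal to Corollary 4.2.
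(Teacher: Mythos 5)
The theorem you are proving asserts the equivalence of (1) $a_r^{\tiny\textcircled{g}_m}=x$ and (2) $x=ax^2$, $a^{m+1}x=aa_r^{\tiny\textcircled{d}}a^{m}$, $\lim_{n\to\infty}||a^n-axa^n||^{\frac{1}{n}}=0$; since you were not shown condition (2), your proposal necessarily stays at the level of a generic strategy, and the one concrete reduction it commits to is precisely the step that fails. The crux of $(2)\Rightarrow(1)$ is to pass from $a^{m+1}x=aa_r^{\tiny\textcircled{d}}a^{m}$ back to the defining equation $(aa_r^d)^*a^{m+1}x=(aa_r^d)^*a^m$. Your plan is instead to strip the leading $a^m$ so as to land on Corollary 4.2's identity $ax=(a_r^daa_r^{\tiny\textcircled{d}})^{m}a^{m}$, by left-multiplying with powers of $a_r^d$ and ``folding using $aa_r^da=a$ up to a quasi-nilpotent remainder.'' This does not work: the generalized right Drazin inverse only obeys right-handed rules such as $a(a_r^d)^2=a_r^d$ and $a^2a_r^d=aa_r^da$, so $(a_r^d)^ma^{m+1}x$ does not collapse to $ax$ (nor even to $aa_r^dax$), and a quasi-nilpotent discrepancy cannot simply be discarded inside an exact identity. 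You also never engage the limit hypothesis in (2), which is indispensable in the converse because the two algebraic equations alone do not pin $x$ down.

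The paper's argument avoids all of this by exploiting the involution rather than cancellation: from $a^{m+1}x=aa_r^{\tiny\textcircled{d}}a^m$ one gets $aa_r^{\tiny\textcircled{d}}a^{m+1}x=aa_r^{\tiny\textcircled{d}}a^m$, rewrites this as $(aa_r^{\tiny\textcircled{d}})^*a^{m+1}x=(aa_r^{\tiny\textcircled{d}})^*a^m$ using the self-adjointness of $aa_r^{\tiny\textcircled{d}}$, then left-multiplies by $(aa_r^d)^*$ and uses the absorption $aa_r^{\tiny\textcircled{d}}aa_r^d=aa_r^d$ to recover $(aa_r^d)^*a^{m+1}x=(aa_r^d)^*a^m$; together with $x=ax^2$ and the limit condition this verifies the definition of $a_r^{\tiny\textcircled{g}_m}$ directly, and the forward direction is the same computation run in reverse via $a_r^{\tiny\textcircled{d}}=(a_r^d)^2(a_r^d)_r^{\tiny\textcircled{\#}}$. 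Your toolkit does list the Hermiticity of $aa_r^{\tiny\textcircled{d}}$ and the absorption identities, so the right ingredients are present, but the route you actually propose --- telescoping down to $ax=\cdots$ and invoking Corollary 4.2 --- is not the one that closes the converse.
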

\begin{enumerate}
\item [(1)] $a_r^{\tiny\textcircled{g}_m}=x$.
 \vspace{-.5mm}
\item [(2)] $x=ax^2, a^{m+1}x=aa_r^{\tiny\textcircled{d}}a^{m}, \lim\limits_{n\to \infty}||a^n-axa^n||^{\frac{1}{n}}=0.$
\end{enumerate}
\begin{proof} $(1)\Rightarrow (2)$ By hypothesis, there exists $x\in \mathcal{A}$ such that
$$x=ax^2, (aa_r^d)^*a^{m+1}x=(aa_r^d)^*a^m, \lim\limits_{n\to \infty}||a^n-axa^n||^{\frac{1}{n}}=0.$$ In view of~\cite[Theorem 3.5]{CM2},
$$a_r^{\tiny\textcircled{d}}=(a_r^d)^2(a_r^d)_r^{\tiny\textcircled{\#}}.$$ Hence,
$$(aa_r^{\tiny\textcircled{d}})^*a^{m+1}x=(aa_r^{\tiny\textcircled{d}})^*a^m.$$ Since $(aa_r^{\tiny\textcircled{d}})^*=aa_r^{\tiny\textcircled{d}}$, we have
$$(aa_r^{\tiny\textcircled{d}})a^{m+1}x=(aa_r^{\tiny\textcircled{d}})a^m.$$ Thus
$$a_r^{\tiny\textcircled{d}}a^{m+1}x=a_r^{\tiny\textcircled{d}}[aa_r^{\tiny\textcircled{d}}a^{m+1}x]=a_r^{\tiny\textcircled{d}}
[aa_r^{\tiny\textcircled{d}}a^m]=a_r^{\tiny\textcircled{d}}a^m.$$ Hence,
$$aa_r^{\tiny\textcircled{d}}a^{m+1}x=aa_r^{\tiny\textcircled{d}}a^m.$$
Since $x=ax^2$, we deduce that $a^{m+1}x=aa_r^{\tiny\textcircled{d}}a^m,$ as required.

$(2)\Rightarrow (1)$ By hypothesis, we have $$x=ax^2, a^{m+1}x=aa_r^{\tiny\textcircled{d}}a^m.$$
Then $aa_r^{\tiny\textcircled{d}}a^{m+1}x=aa_r^{\tiny\textcircled{d}}a^m$. Hence,
$(aa_r^{\tiny\textcircled{d}})^*a^{m+1}x=(aa_r^{\tiny\textcircled{d}})^*a^m$.
Accordingly, we have $$(aa_r^d)^*(aa_r^{\tiny\textcircled{d}})^*a^{m+1}x=(aa_r^d)^*(aa_r^{\tiny\textcircled{d}})^*a^m.$$
This implies that $$(aa_r^{\tiny\textcircled{d}}aa_r^d)^*a^{m+1}x=(aa_r^{\tiny\textcircled{d}}aa_r^d)^*a^m.$$
It is easy to verify that $aa_r^{\tiny\textcircled{d}}aa_r^d=aa_r^d$; hence, $$(aa_r^d)^*a^{m+1}x=(aa_r^d)^*a^m.$$
Therefore $a_r^{\tiny\textcircled{g}_m}=x$ by Theorem 2.4.\end{proof}

\begin{cor} Let $a\in \mathcal{A}_r^{\tiny\textcircled{d}}$. Then the following are equivalent:\end{cor}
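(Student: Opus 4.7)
The plan is to read this corollary off as an immediate specialization of Theorem 4.8. Since Theorem 4.8 already characterizes $a_r^{\tiny\textcircled{g}_m}=x$ by the three conditions
\[
x = ax^2, \qquad a^{m+1}x = aa_r^{\tiny\textcircled{d}}a^m, \qquad \lim_{n\to\infty}\|a^n - axa^n\|^{1/n}=0,
\]
what remains is to substitute the parameter(s) distinguishing this corollary from the theorem — most likely $m=1$ (recovering the ungraded generalized right group inverse $a_r^{\tiny\textcircled{g}}$) or passage to the two-sided setting (so that $a_r^{\tiny\textcircled{d}}$ is replaced by a Drazin-type inverse and $a_r^{\tiny\textcircled{g}_m}$ by $a^{\tiny\textcircled{W}_m}$) — and translate the three listed conditions accordingly.

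First I would invoke Theorem 4.8 in the specialized form. In the forward direction, assuming $x$ equals the specialized inverse, the equation $x = ax^2$ and the asymptotic condition are inherited directly from the definition, while the middle equality $a^{m+1}x = aa_r^{\tiny\textcircled{d}}a^m$ is obtained exactly as in the proof of Theorem 4.8: left-multiply the defining relation $(aa_r^d)^*a^{m+1}x = (aa_r^d)^*a^m$ by $a_r^{\tiny\textcircled{d}}$, use $(aa_r^{\tiny\textcircled{d}})^* = aa_r^{\tiny\textcircled{d}}$ together with the representation $a_r^{\tiny\textcircled{d}} = (a_r^d)^2(a_r^d)_r^{\tiny\textcircled{\#}}$ from \cite[Theorem 3.5]{CM2}, and then apply $x = ax^2$.

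For the reverse implication I would start from the three specialized conditions and recover the hypothesis of Theorem 2.4. Left-multiplying $a^{m+1}x = aa_r^{\tiny\textcircled{d}}a^m$ by $(aa_r^{\tiny\textcircled{d}})^* = aa_r^{\tiny\textcircled{d}}$ yields $(aa_r^{\tiny\textcircled{d}})^*a^{m+1}x = (aa_r^{\tiny\textcircled{d}})^*a^m$; then further left-multiplication by $(aa_r^d)^*$ and use of the absorption identity $aa_r^{\tiny\textcircled{d}}aa_r^d = aa_r^d$ (already noted inside the proof of Theorem 4.8) collapses this to $(aa_r^d)^*a^{m+1}x = (aa_r^d)^*a^m$, which together with $x = ax^2$ and the limit condition is precisely the defining relation for $a_r^{\tiny\textcircled{g}_m}$. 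Appealing to Theorem 2.4 then gives $x = a_r^{\tiny\textcircled{g}_m}$, and specializing $m$ or the inverse notation completes the corollary.

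The main obstacle — if any — will be matching the specialized hypothesis to the middle condition $a^{m+1}x = aa_r^{\tiny\textcircled{d}}a^m$ without losing information, in particular ensuring that the absorption identity $aa_r^{\tiny\textcircled{d}}aa_r^d = aa_r^d$ survives the specialization. Since the corollary is a direct specialization of a theorem whose proof already rests on these ingredients, no new technical difficulty should arise; the proof essentially consists in reading Theorem 4.8 at the appropriate parameter value and translating notation.
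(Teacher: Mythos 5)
Your proposal is correct and matches the paper's approach exactly: the corollary is obtained by setting $m=1$ in Theorem 4.8, which is precisely the one-line proof the paper gives. Your additional unpacking of the forward and reverse directions is a faithful re-reading of the proof of Theorem 4.8 at that parameter value and introduces nothing new or problematic.
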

\begin{enumerate}
\item [(1)] $a\in \mathcal{A}_r^{\tiny\textcircled{g}}$.
 \vspace{-.5mm}
\item [(2)] There exists $x\in \mathcal{A}$ such that $$x=ax^2, a^2x=aa_r^{\tiny\textcircled{d}}a, \lim\limits_{n\to \infty}||a^n-axa^{n}||^{\frac{1}{n}}=0.$$
\end{enumerate}
\begin{proof} We obtain the result by choosing $m=1$ in Theorem 4.8.\end{proof}

As an immediate consequence of Theorem 4.8, we derive

\begin{cor} Let $A,X\in {\Bbb C}^{n\times n}$. Then the following are equivalent:\end{cor}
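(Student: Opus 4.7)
The plan is to derive the matrix version as a direct specialization of Theorem 4.8 to the Banach *-algebra $\mathcal{A}={\Bbb C}^{n\times n}$, whose involution (conjugate transpose) is proper. First, I would note that every $A\in{\Bbb C}^{n\times n}$ has finite Drazin index and hence admits a generalized right core inverse; in the matrix setting this element coincides with the core-EP inverse $A^{\tiny\textcircled{\dag}}$. Consequently the hypothesis $a\in\mathcal{A}_r^{\tiny\textcircled{d}}$ of Theorem 4.8 is automatically satisfied, the invariant $a_r^{\tiny\textcircled{g}_m}$ specializes to the $m$-weak group inverse $A^{\tiny\textcircled{W}_m}$, and $a_r^{\tiny\textcircled{d}}$ specializes to $A^{\tiny\textcircled{\dag}}$.

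The next step is to rewrite the asymptotic condition $\lim_{n\to\infty}||A^n-AXA^n||^{\frac{1}{n}}=0$ of Theorem 4.8 in a finite-dimensional form. Using $X=AX^2$, the iteration carried out in the proof of Theorem 3.3 relates $A^n-AXA^n$ to powers of $A-AXA$, so the limit condition becomes equivalent to $A-AXA$ being quasi-nilpotent; in ${\Bbb C}^{n\times n}$, quasi-nilpotent is the same as nilpotent. Combined again with $X=AX^2$, this is equivalent to the existence of some $k\in{\Bbb N}$ with $A^k=AXA^k$, the equality then holding for every $k$ that is at least the Drazin index of $A$.

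Putting these observations together, the equivalence of $X=A^{\tiny\textcircled{W}_m}$ with the finite system $X=AX^2$, $A^{m+1}X=AA^{\tiny\textcircled{\dag}}A^m$, $A^k=AXA^k$ for some $k\in{\Bbb N}$, is immediate from Theorem 4.8 after the above identifications. The principal technical point, and the only one that genuinely uses finite-dimensionality, is the passage from the spectral-radius formulation to the finite-index formulation of the third condition; this is routine via the fact that a matrix with zero spectral radius is nilpotent, so I do not expect a real obstacle. Everything else is a mechanical transcription of Theorem 4.8 into the language of complex matrices.
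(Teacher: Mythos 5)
Your proposal is correct and follows the paper's own route: the paper simply declares this corollary an immediate consequence of Theorem 4.8, and your specialization to $\mathcal{A}={\Bbb C}^{n\times n}$ (identifying $a_r^{\tiny\textcircled{d}}$ with $A^{\tiny\textcircled{\dag}}$ and $a_r^{\tiny\textcircled{g}_m}$ with $A^{\tiny\textcircled{W}_m}$) is exactly the intended reading. Your conversion of the spectral-radius condition to $A^k=AXA^k$ via nilpotency of $A-AXA$, using the identity $(A-AXA)^{k}=A^{k}-AXA^{k}$ from the proof of Theorem 3.3, correctly supplies the one detail the paper leaves implicit.
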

\begin{enumerate}
\item [(1)] $A^{\tiny\textcircled{W}_m}=X$.
 \vspace{-.5mm}
\item [(2)] $X=AX^2, A^{m+1}X=AA^{\tiny\textcircled{\dag}}A^{m}, A^n=AXA^n$ for some $n\in {\Bbb N}$.
\end{enumerate}

Let $a,b,c\in R$. An element $a$ has $(b,c)$-inverse if there exists $x\in R$ such that $$xab=b, cax=c ~\mbox{and}~ x\in bRx\bigcap xRc.$$ If such $x$ exists, it is unique and is denoted by $a^{(b,c)}$ (see~\cite{D1}).

\begin{thm} Let $a\in R_r^{\tiny\textcircled{d}}$. Then
$$a_r^{\tiny\textcircled{g}_m}=a^{\big((a_r^{d})^{m+1}a^m, a_r^daa_r^{\tiny\textcircled{d}}a^m\big)}.$$\end{thm}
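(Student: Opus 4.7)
The plan is to check the four defining conditions of a $(b,c)$-inverse directly, taking $x=a_r^{\tiny\textcircled{g}_m}$, $b=(a_r^d)^{m+1}a^m$, and $c=a_r^d a a_r^{\tiny\textcircled{d}}a^m$. I will use the closed form $x=(a_r^d)^{m+1}aa_r^{\tiny\textcircled{d}}a^m$ from Theorem 4.1, the identity $a^{m+1}x=aa_r^{\tiny\textcircled{d}}a^m$ from Theorem 4.8, and the outer-inverse relation $xax=x$ from Corollary 3.4. Four algebraic simplifications will be used repeatedly: (a) $a^{m+1}(a_r^d)^{m+1}=aa_r^d$ together with the commutation of $aa_r^d$ with powers of $a$ (both consequences of $a(a_r^d)^2=a_r^d$); (b) $aa_r^{\tiny\textcircled{d}}\cdot aa_r^d=aa_r^d$ and $aa_r^d\cdot aa_r^{\tiny\textcircled{d}}=aa_r^{\tiny\textcircled{d}}$, both established inside the proof of Theorem 4.1; (c) $a_r^d\cdot aa_r^{\tiny\textcircled{d}}=a_r^{\tiny\textcircled{d}}$ (so that $c=a_r^{\tiny\textcircled{d}}a^m$), which comes from the representation $a_r^{\tiny\textcircled{d}}=(a_r^d)^2(a_r^d)^{\tiny\textcircled{\#}}$ of~\cite{CM2}; and (d) $a_r^{\tiny\textcircled{d}}\cdot aa_r^{\tiny\textcircled{d}}=a_r^{\tiny\textcircled{d}}$, obtained by the same substitution together with the right-core axiom $a_r^d(a_r^d)^{\tiny\textcircled{\#}}a_r^d=a_r^d$.

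Three of the four conditions then drop out by direct substitution. For $x\in bRx$, I take $r=a$; by Theorem 4.8, $bax=(a_r^d)^{m+1}a^{m+1}x=(a_r^d)^{m+1}aa_r^{\tiny\textcircled{d}}a^m=x$. For $x\in xRc$, I compute $ax=a(a_r^d)^{m+1}aa_r^{\tiny\textcircled{d}}a^m=(a_r^d)^{m}aa_r^{\tiny\textcircled{d}}a^m=(a_r^d)^{m-1}c$ by (a) and (c); then $xax=x$ yields $x=x\cdot(a_r^d)^{m-1}\cdot c$. For $cax=c$, I write $c=a_r^{\tiny\textcircled{d}}a^m$ and apply Theorem 4.8 to obtain $cax=a_r^{\tiny\textcircled{d}}a^{m+1}x=a_r^{\tiny\textcircled{d}}\cdot aa_r^{\tiny\textcircled{d}}a^m=a_r^{\tiny\textcircled{d}}a^m=c$ by (d).

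The main obstacle is the remaining condition $xab=b$. Expanding and using (a) and (b) in sequence, one reaches $xab=(a_r^d)^{m+1}aa_r^{\tiny\textcircled{d}}\cdot aa_r^d\cdot a^m=(a_r^d)^{m+1}aa_r^d\cdot a^m$; since $aa_r^d$ commutes with $a^m$, this reduces to the pointwise identity $a_r^d\cdot aa_r^d=a_r^d$, which is not among the identities already invoked. I will establish it by setting $y:=a_r^d\cdot aa_r^d$ and checking directly that $ay^2=y$, $a^2y=aya$, and $a-aya=a(1-aa_r^d)\in\mathcal{A}^{qnil}$; hence $y$ satisfies the three defining conditions of the generalized right Drazin inverse of $a$, and uniqueness of $a_r^d$ forces $y=a_r^d$. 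Once this step is in hand, $(a_r^d)^{m+1}aa_r^d=(a_r^d)^m\cdot(a_r^d\cdot aa_r^d)=(a_r^d)^{m+1}$, whence $xab=(a_r^d)^{m+1}a^m=b$ and the verification is complete.
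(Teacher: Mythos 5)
Your overall strategy coincides with the paper's: take $x=a_r^{\tiny\textcircled{g}_m}$ in its Theorem~4.1 form and verify the four defining conditions of the $(b,c)$-inverse one at a time. Your checks of $x\in bRx$, $x\in xRc$ and $cax=c$ are correct and differ from the paper's only cosmetically (the paper exhibits explicit factorizations where you route through Theorem~4.8 and the simplification $c=a_r^da a_r^{\tiny\textcircled{d}}a^m=a_r^{\tiny\textcircled{d}}a^m$; the underlying identities are the same).

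The one place you genuinely depart from the paper is the condition $xab=b$, and that is where the gap sits. You correctly observe that the computation bottoms out at the identity $a_r^d\,aa_r^d=a_r^d$, which the paper uses silently in the step $(a_r^d)^{m+1}aa_r^da^m=(a_r^d)^{m+1}a^m$; isolating it is to your credit. But your proposed justification --- show that $y:=a_r^daa_r^d$ satisfies the three defining equations of a generalized right Drazin inverse and invoke uniqueness --- does not work, because those three one-sided equations do not determine the inverse uniquely. For instance, on $\ell^2({\Bbb N})\oplus{\Bbb C}$ let $a=L_1\oplus 0$, let $x_0=S_1\oplus 0$, and let $x=x_0+u$ where $u(v,c)=(ce_1,0)$ with $e_1=(1,0,0,\ldots)$. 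Since $L_1e_1=0$ one checks that both $x_0$ and $x$ satisfy $ax^2=x$, $a^2x=axa$ and $a-axa\in\mathcal{A}^{qnil}$, yet $x\neq x_0$ and moreover $xax=x_0\neq x$. So uniqueness fails, and worse, the outer-inverse identity $xax=x$ (equivalently $a_r^daa_r^d=a_r^d$) is simply not a consequence of the three axioms for an arbitrary generalized right Drazin inverse. The argument is rescued only by reading $a_r^d$ as a distinguished choice for which $a_r^daa_r^d=a_r^d$ holds (e.g.\ the one constructed in the cited preprint [CM]), or by adding $xax=x$ to the defining list; with that reading your computation of $xab=b$ goes through verbatim, and your proof is then on the same footing as the paper's. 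As written, however, the appeal to uniqueness is a false step.
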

\begin{proof} Obviously, $a\in R_r^{d}$. Let $x=a_r^{\tiny\textcircled{g}_m}$. Then we verify
that
$$\begin{array}{rll}
x&=&(a_r^d)^{m+1}aa_r^{\tiny\textcircled{d}}a^{m}\\
&=&[(a_r^d)^{m+1}a^{m}]a[(a_r^d)^{m+1}a]a_r^{\tiny\textcircled{d}}a^{m}]\\
&\in &(a_r^{d})^{m+1}a^mRx,\\
x&=&(a_r^daa_r^{\tiny\textcircled{d}})^{m+1}a^m\\
&=&(a_r^daa_r^{\tiny\textcircled{d}})^{m}a_r^d[aa_r^{\tiny\textcircled{d}}a^m]\\
&=&(a_r^daa_r^{\tiny\textcircled{d}})^{m}(a_r^daa_r^{\tiny\textcircled{d}})aa_r^d[aa_r^{\tiny\textcircled{d}}a^m]\\
&=&(a_r^daa_r^{\tiny\textcircled{d}})^{m}(a_r^daa_r^{\tiny\textcircled{d}})a^m(a_r^d)^m[aa_r^{\tiny\textcircled{d}}a^m]\\
&=&[(a_r^daa_r^{\tiny\textcircled{d}})^{m+1}a^m](a_r^d)^{m-1}[a_r^daa_r^{\tiny\textcircled{d}}a^m]\\
&\in& xR[a_r^daa_r^{\tiny\textcircled{d}}a^m],\\
xa(a_r^{d})^{m+1}a^m&=&(a_r^d)^{m+1}aa_r^{\tiny\textcircled{d}}a^{m}a(a_r^{d})^{m+1}a^m\\
&=&(a_r^d)^{m+1}aa_r^{d}a^m\\
&=&(a_r^{d})^{m+1}a^m,\\
a_r^daa_r^{\tiny\textcircled{d}}a^max&=&a_r^daa_r^{\tiny\textcircled{d}}a^ma[(a_r^d)^{m+1}aa_r^{\tiny\textcircled{d}}a^{m}]\\
&=&a_r^da^{m+1}(a_r^d)^{m+1}aa_r^{\tiny\textcircled{d}}a^{m}\\
&=&a_r^daa_r^daa_r^{\tiny\textcircled{d}}a^{m}\\
&=&a_r^daa_r^{\tiny\textcircled{d}}a^{m}.
\end{array}$$ Therefore $$a_r^{\tiny\textcircled{g}_m}=a^{\big((a_r^{d})^{m+1}a^m, a_r^daa_r^{\tiny\textcircled{d}}a^m\big)},$$ as desired.
\end{proof}

Let $a\in R$ and $T,S\subseteq R$. We say that $a$ has $\{2\}$-inverse $x$ provided that $xax=x, im(a)=T, ker(a)=S$. We denote $x$ by
$a^{(2)}_{T,S}$. We present the representation of $m$-generalized right group inverse by the $\{2\}$-inverse in a Banach algebra.

\begin{thm} Let $a\in R_r^{\tiny\textcircled{g}_m}$. Then $$a_r^{\tiny\textcircled{g}_m}=a^{(2)}_{im\big((a_r^d)^{m+1}a^{m}\big), ker(a_r^{\tiny\textcircled{d}}a^m)}.$$\end{thm}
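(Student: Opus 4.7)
The plan is to leverage Theorem 4.11, which already represents $a_r^{\tiny\textcircled{g}_m}$ as the $(b,c)$-inverse $a^{(b,c)}$ of $a$ with
$$b=(a_r^d)^{m+1}a^m \quad \mbox{and} \quad c = a_r^d a\, a_r^{\tiny\textcircled{d}} a^m.$$
Write $x = a_r^{\tiny\textcircled{g}_m}$. By Corollary 3.4, $xax = x$, so $x$ is already a $\{2\}$-inverse of $a$; only the prescribed image and kernel remain to be identified.

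The first substantive step is to simplify $c$ into the form $a_r^{\tiny\textcircled{d}}a^m$ appearing in the statement. Using the representation $a_r^{\tiny\textcircled{d}} = (a_r^d)^2 (a_r^d)_r^{\tiny\textcircled{\#}}$ (invoked in the proof of Theorem 4.4 from \cite{CM2}) together with the defining identity $a(a_r^d)^2 = a_r^d$, a single rewrite gives
$$a_r^d a\, a_r^{\tiny\textcircled{d}} \;=\; a_r^d \cdot [a(a_r^d)^2] \cdot (a_r^d)_r^{\tiny\textcircled{\#}} \;=\; (a_r^d)^2 (a_r^d)_r^{\tiny\textcircled{\#}} \;=\; a_r^{\tiny\textcircled{d}},$$
whence $c = a_r^{\tiny\textcircled{d}}a^m$.

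The remainder is a direct unpacking of the $(b,c)$-inverse identities $xab = b$, $cax = c$, $x \in bRx \cap xRc$ supplied by Theorem 4.11. From $b = xab \in xR$ and $x \in bRx \subseteq bR$ one obtains $xR = bR$, i.e. $im(x) = im((a_r^d)^{m+1}a^m)$. For the kernel: if $xy = 0$ then $cy = (cax)y = ca(xy) = 0$, while if $cy = 0$ then, writing $x = xrc$ for some $r\in R$ (available because $x \in xRc$), one has $xy = xrcy = 0$; thus $ker(x) = ker(c) = ker(a_r^{\tiny\textcircled{d}}a^m)$. Combining these two equalities with $xax = x$ yields $x = a^{(2)}_{im((a_r^d)^{m+1}a^m),\, ker(a_r^{\tiny\textcircled{d}}a^m)}$, as required.

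The main obstacle is the identity $a_r^d a\, a_r^{\tiny\textcircled{d}} = a_r^{\tiny\textcircled{d}}$: it is the single nontrivial bridge between the generalized right Drazin inverse and the generalized right core inverse in this argument, and it is what converts the $(b,c)$-inverse data of Theorem 4.11 into precisely the image/kernel form demanded by the statement. Once this simplification is in hand, everything else is formal.
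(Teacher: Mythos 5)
Your proposal is correct, but it reaches the conclusion by a genuinely different route than the paper. The paper proves this theorem by direct element computations: it takes the explicit representation $x=(a_r^d)^{m+1}aa_r^{\tiny\textcircled{d}}a^{m}$ from Theorem 4.1 and verifies $im(x)=im\big((a_r^d)^{m+1}a^{m}\big)$ and $ker(x)=ker(a_r^{\tiny\textcircled{d}}a^m)$ by hand (e.g.\ writing $(a_r^d)^{m+1}a^{m}=x[aa_r^d]$ for the image, and $a_r^{\tiny\textcircled{d}}a^{m}r=a^mx(r)$ for the kernel), treating Theorem 4.11 and this theorem as independent consequences of Theorem 4.1. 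You instead deduce the result formally from Theorem 4.11, using the standard facts that a $(b,c)$-inverse satisfies $x\mathcal{A}=b\mathcal{A}$ and $ker(x)=ker(c)$; the only substantive input you add is the bridge identity $a_r^daa_r^{\tiny\textcircled{d}}=a_r^{\tiny\textcircled{d}}$, which follows correctly from the cited representation $a_r^{\tiny\textcircled{d}}=(a_r^d)^2(a_r^d)_r^{\tiny\textcircled{\#}}$ together with the defining relation $a(a_r^d)^2=a_r^d$, and which converts the second parameter $a_r^daa_r^{\tiny\textcircled{d}}a^m$ of Theorem 4.11 into the kernel datum $a_r^{\tiny\textcircled{d}}a^m$ of the present statement. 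Your argument is shorter and makes explicit the structural link between the two representation theorems (a link the paper leaves implicit), at the cost of depending on Theorem 4.11 and on the extra identity; the paper's computation is longer but self-contained relative to Theorem 4.1. Both are valid proofs.
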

\begin{proof} Let $x=a_r^{\tiny\textcircled{g}_m}$. Clearly, we have $x=xax$.

Step 1. $im(x)=im\big((a_r^d)^{m+1}a^{m}\big)$. In view of Theorem 4.1, we have

$$\begin{array}{rll}
x&=&(a_r^d)^{m+1}aa_r^{\tiny\textcircled{d}}a^{m}\\
&=&[(a_r^d)^{m+1}a^m](a_r^{\tiny\textcircled{d}})^ma^{m}\\
&\in&im\big((a_r^d)^{m+1}a^{m}\big),
\end{array}$$
$$\begin{array}{rll}
(a_r^d)^{m+1}a^{m}&=&(a_r^d)^{m+1}a^{m}=(a_r^d)^{m+1}aa_r^da^{m}=(a_r^d)^{m+1}a^{m+1}a_r^d\\
&=&[(a_r^d)^{m+1}aa_r^{\tiny\textcircled{d}}a^{m+1}]a_r^d\\
&=&[(a_r^d)^{m+1}aa_r^{\tiny\textcircled{d}}a^{m}][aa_r^d].
\end{array}$$
Hence $(a_r^d)^{m+1}a^{m}=x[aa_r^d]\in im(x)$. Therefore $im(x)=im\big(aa_r^{d}a^*\big)$.

Step 2. $ker(x)=ker(a_r^{\tiny\textcircled{d}}a^m)$. If $(a_r^{\tiny\textcircled{d}}a^m)r=0$ for some $r\in R$, then $$\begin{array}{rll}
xr&=&[(a_r^d)^{m+1}aa_r^{\tiny\textcircled{d}}a^{m}]r\\
&=&(a_r^d)^{m+1}a[a_r^{\tiny\textcircled{d}}a^{m}]r\\
&=&0.
\end{array}$$ Thus $r\in ker(x)$.

If $x(r)=0$, then $$\begin{array}{rll}
[a_r^{\tiny\textcircled{d}}a^{m}]r&=&(a^2)^*r=aa_r^d[a_r^{\tiny\textcircled{d}}a^{m}]r\\
&=&a^{m+1}[(a_r^d)^{m+1}a_r^{\tiny\textcircled{d}}a^{m}]r\\
&=&a^{m}[(a_r^d)^{m+1}aa_r^{\tiny\textcircled{d}}a^{m}]r\\
&=&a^mx(r)=0.
\end{array}$$ Hence, $r\in ker(a_r^{\tiny\textcircled{d}}a^m)$, as desired.

Therefore we complete the proof.\end{proof}

\begin{cor} Let $A,X\in {\Bbb C}^{n\times n}$. Then
$$A^{\tiny\textcircled{W}_m}=A^{\big((A^{D})^{m+1}A^m, A^DAA^{\tiny\textcircled{D}}A^m\big)}=A^{(2)}_{im\big((A^D)^{m+1}A^{m}\big), ker(A^{\tiny\textcircled{D}}A^m)}.$$
\end{cor}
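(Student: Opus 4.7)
The plan is to read off Corollary 4.13 as the finite-dimensional specialization of Theorems 4.11 and 4.12, after matching the abstract invariants to their classical matrix counterparts.

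First I would justify that every $A\in{\Bbb C}^{n\times n}$ lies in $({\Bbb C}^{n\times n})_r^{\tiny\textcircled{d}}$ and in $({\Bbb C}^{n\times n})_r^{\tiny\textcircled{g}_m}$. Since ${\Bbb C}^{n\times n}$ is finite-dimensional, every matrix has a Drazin inverse, so $A\in({\Bbb C}^{n\times n})^{D}$ and in particular $A_r^d=A^D$. Likewise every complex matrix admits a core-EP inverse $A^{\tiny\textcircled{D}}$, which in the Banach-*-algebra language of the paper is exactly the generalized right core inverse $A_r^{\tiny\textcircled{d}}$ (the quasi-nilpotent tail degenerates to nilpotency in the matrix case). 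Moreover, over ${\Bbb C}^{n\times n}$ the $m$-generalized right group inverse $A_r^{\tiny\textcircled{g}_m}$ coincides with the $m$-weak group inverse $A^{\tiny\textcircled{W}_m}$: this is because the defining quasi-nilpotent condition $\lim_{n\to\infty}\|A^n-AXA^n\|^{1/n}=0$ is equivalent, in finite dimensions, to nilpotency of $A^n-AXA^n$ for some $n$, which combined with the other two identities is exactly the Wang-Chen definition of $A^{\tiny\textcircled{W}_m}$ recalled in the Introduction.

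Next I would simply substitute these identifications. Theorem 4.11 gives
$$A_r^{\tiny\textcircled{g}_m}=A^{\big((A_r^d)^{m+1}A^m,\,A_r^dAA_r^{\tiny\textcircled{d}}A^m\big)},$$
which under the dictionary $A_r^d\leftrightarrow A^D$, $A_r^{\tiny\textcircled{d}}\leftrightarrow A^{\tiny\textcircled{D}}$, $A_r^{\tiny\textcircled{g}_m}\leftrightarrow A^{\tiny\textcircled{W}_m}$ becomes the first claimed equality
$$A^{\tiny\textcircled{W}_m}=A^{\big((A^D)^{m+1}A^m,\,A^DAA^{\tiny\textcircled{D}}A^m\big)}.$$
Theorem 4.12 gives
$$A_r^{\tiny\textcircled{g}_m}=A^{(2)}_{\mathrm{im}((A_r^d)^{m+1}A^m),\,\ker(A_r^{\tiny\textcircled{d}}A^m)},$$
and the same dictionary yields the second equality
$$A^{\tiny\textcircled{W}_m}=A^{(2)}_{\mathrm{im}((A^D)^{m+1}A^m),\,\ker(A^{\tiny\textcircled{D}}A^m)}.$$

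The only step requiring real attention is the identification of the generalized right core inverse with the core-EP inverse on ${\Bbb C}^{n\times n}$, since this is what keeps $A_r^{\tiny\textcircled{d}}$ and $A^{\tiny\textcircled{D}}$ interchangeable inside the $(b,c)$-inverse and $\{2\}$-inverse formulas; everything else is purely notational. I would dispose of it by verifying that $A^{\tiny\textcircled{D}}$ satisfies the three defining relations of the generalized right core inverse: $AX^2=X$ and $(AX)^*=AX$ follow directly from the definition of core-EP, while $\lim_{n\to\infty}\|A^n-AXA^{n+1}\|^{1/n}=0$ follows because $A^n-AA^{\tiny\textcircled{D}}A^{n+1}=0$ for all $n\geq\mathrm{ind}(A)$. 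With this identification in hand the corollary is a direct transcription.
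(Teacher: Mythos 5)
Your proposal matches the paper's proof, which disposes of this corollary in one line as the specialization of Theorems 4.11 and 4.12 to ${\Bbb C}^{n\times n}$ under the identifications $A_r^d=A^D$, $A_r^{\tiny\textcircled{d}}=A^{\tiny\textcircled{D}}$, $A_r^{\tiny\textcircled{g}_m}=A^{\tiny\textcircled{W}_m}$, exactly as you spell out. One small slip in your verification of the last identification: $A^n-AA^{\tiny\textcircled{D}}A^{n+1}=A^n-A^{n+1}\neq 0$ in general (even for $A=2I$), so the paper's stated limit condition for the generalized right core inverse must be a typo for $\lim_{n\to\infty}\|a^n-axa^{n}\|^{1/n}=0$, and with that reading your check goes through since $AA^{\tiny\textcircled{D}}A^{n}=A^{n}$ for $n>\mathrm{ind}(A)$.
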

\begin{proof} This is immediate from Theorem 4.11 and Theorem 4.12.\end{proof}

We next use the preceding presentations of $m$-generalized right group inverse in solving the matrix equations.
We consider the following equation in $\mathcal{A}$:

$$[aa_r^d]^*a^{m+1}x=[aa_r^d]^*a^{m}b,~~~~~~~~~~~~~~~~~~~~~(4.1)$$

where $a,b\in \mathcal{A}$ and $m\in {\Bbb N}$.

\begin{thm} Let $a\in \mathcal{A}_r^{\tiny\textcircled{d}}$. Then the general solution of Eq. (4.1) is
$$x=a^{\tiny\textcircled{g}_m}b+[1-a^{\tiny\textcircled{g}_m}a]y,$$
where $y\in \mathcal{A}$ is arbitrary.
\end{thm}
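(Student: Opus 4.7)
The plan is to verify the two set-theoretic inclusions that make up the ``general solution'' assertion: every $x$ of the stated form satisfies (4.1), and conversely every solution of (4.1) is of this form for a suitable choice of $y$.

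For the sufficiency direction, I will use the defining property $(aa_r^d)^*a^{m+1}a_r^{\tiny\textcircled{g}_m} = (aa_r^d)^*a^m$ coming from Theorem 2.4. Right-multiplying by $a$ yields $(aa_r^d)^*a^{m+1}a_r^{\tiny\textcircled{g}_m}a = (aa_r^d)^*a^{m+1}$, which says precisely that $[1-a_r^{\tiny\textcircled{g}_m}a]y$ is annihilated by $(aa_r^d)^*a^{m+1}$ for every $y$. Substituting $x = a_r^{\tiny\textcircled{g}_m}b + [1-a_r^{\tiny\textcircled{g}_m}a]y$ into (4.1) then reduces to the defining property applied to $b$, giving $(aa_r^d)^*a^{m+1}x = (aa_r^d)^*a^m b$.

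For the necessity direction, given any solution $x_0$ of (4.1), I will show that $a_r^{\tiny\textcircled{g}_m}ax_0 = a_r^{\tiny\textcircled{g}_m}b$. Once this is known, the choice $y = x_0$ works: by Corollary 3.4(1) we have $a_r^{\tiny\textcircled{g}_m}aa_r^{\tiny\textcircled{g}_m} = a_r^{\tiny\textcircled{g}_m}$, so $a_r^{\tiny\textcircled{g}_m}b + [1-a_r^{\tiny\textcircled{g}_m}a]x_0 = a_r^{\tiny\textcircled{g}_m}b + x_0 - a_r^{\tiny\textcircled{g}_m}ax_0 = x_0$. To extract the identity $a_r^{\tiny\textcircled{g}_m}ax_0 = a_r^{\tiny\textcircled{g}_m}b$ from (4.1), I will left-multiply (4.1) by $aa_r^{\tiny\textcircled{d}}$ (available because $a \in \mathcal{A}_r^{\tiny\textcircled{d}}$), use the identity $aa_r^{\tiny\textcircled{d}}(aa_r^d)^* = aa_r^{\tiny\textcircled{d}}$ to strip off the involution and obtain $aa_r^{\tiny\textcircled{d}}a^{m+1}x_0 = aa_r^{\tiny\textcircled{d}}a^m b$, and then left-multiply by $(a_r^d)^{m+1}$, invoking the representation $a_r^{\tiny\textcircled{g}_m} = (a_r^d)^{m+1}aa_r^{\tiny\textcircled{d}}a^m$ from Theorem 4.1.

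The main obstacle is the auxiliary identity $aa_r^{\tiny\textcircled{d}}(aa_r^d)^* = aa_r^{\tiny\textcircled{d}}$. My route is first to verify its unstarred companion $aa_r^d \cdot aa_r^{\tiny\textcircled{d}} = aa_r^{\tiny\textcircled{d}}$: using $a_r^{\tiny\textcircled{d}} = (a_r^d)^2 (a_r^d)_r^{\tiny\textcircled{\#}}$ from Theorem 4.4 together with $a(a_r^d)^2 = a_r^d$, one computes $aa_r^d \cdot a_r^d(a_r^d)_r^{\tiny\textcircled{\#}} = a_r^d(a_r^d)_r^{\tiny\textcircled{\#}} = aa_r^{\tiny\textcircled{d}}$. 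Taking adjoints and using the self-adjointness of $aa_r^{\tiny\textcircled{d}}$ (a defining property of the generalized right core inverse) then yields the required relation. With this in place the rest is routine bookkeeping.
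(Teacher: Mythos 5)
Your proposal is correct and follows essentially the same route as the paper: sufficiency via the defining relation $(aa_r^d)^*a^{m+1}a_r^{\tiny\textcircled{g}_m}=(aa_r^d)^*a^m$, and necessity by left-multiplying Eq. (4.1) so as to recover $a_r^{\tiny\textcircled{g}_m}ax_0=a_r^{\tiny\textcircled{g}_m}b$ from the representation $a_r^{\tiny\textcircled{g}_m}=(a_r^d)^{m+1}aa_r^{\tiny\textcircled{d}}a^m$ together with $(aa_r^{\tiny\textcircled{d}})^*=aa_r^{\tiny\textcircled{d}}$ and $aa_r^d\,aa_r^{\tiny\textcircled{d}}=aa_r^{\tiny\textcircled{d}}$, which is exactly the mechanism of the paper's Step 2 (your version is just organized more transparently, and the appeal to Corollary 3.4(1) is not actually needed once $a_r^{\tiny\textcircled{g}_m}ax_0=a_r^{\tiny\textcircled{g}_m}b$ is in hand).
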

\begin{proof} Step 1. Let $a\in \mathcal{A}_r^{\tiny\textcircled{g}_m}$. We claim that Eq. (4.1) has solution
$$x=a_r^{\tiny\textcircled{g}_m}b+[1-a_r^{\tiny\textcircled{g}_m}a]y,$$
where $y\in \mathcal{A}$ is arbitrary.
Let $x=a_r^{\tiny\textcircled{g}_m}b+[1-a_r^{\tiny\textcircled{g}_m}a]y,$
where $y\in \mathcal{A}$. Since $[aa_r^d]^*a^{m+1}a^{\tiny\textcircled{g}_m}=[aa_r^d]^*a^{m},$ we verify that

$$\begin{array}{rll}
[aa_r^d]^*a^{m+1}x&=&[aa_r^d]^*a^{m+1}a^{\tiny\textcircled{g}_m}b+[aa_r^d]^*a^{m+1}[1-a^{\tiny\textcircled{g}_m}a]y\\
&=&[aa_r^d]^*a^{m+1}a^{\tiny\textcircled{g}_m}b+\big([aa_r^d]^*a^{m+1}-[aa_r^d]^*a^{m+1}a^{\tiny\textcircled{g}_m}a\big)y\\
&=&[aa_r^d]^*a^{m}b+\big([aa_r^d]^*a^{m+1}-[aa_r^d]^*a^{m}a\big)y\\
&=&[aa_r^d]^*a^{m}b,
\end{array}$$ as claimed.

Step 2. Let $x$ be the solution of Eq. (4.1). Then $[aa_r^d]^*a^{m+1}x=[aa_r^d]^*a^{m}b.$
In view of Theorem 4.1, $a^{\tiny\textcircled{g}_m}=[a_r^daa_r^{\tiny\textcircled{d}}]^{m+1}a^{m}$.
Then
$$\begin{array}{rll}
a^{\tiny\textcircled{g}_m}ax&=&[a_r^daa_r^{\tiny\textcircled{d}}]^{m+1}a^{m}(ax)=[a_r^daa_r^{\tiny\textcircled{d}}]^{m+1}a^{m+1}x\\
&=&[a_r^daa_r^{\tiny\textcircled{d}}]^{m}[a_r^da^2(a_r^{\tiny\textcircled{d}})^2]a^{m+1}x\\
&=&[a_r^daa_r^{\tiny\textcircled{d}}]^{m}[a_r^da^2a_r^{\tiny\textcircled{d}}][a_r^{\tiny\textcircled{d}}a^{m+1}x]\\
&=&[a_r^daa_r^{\tiny\textcircled{d}}]^{m}[a_r^da^2a_r^{\tiny\textcircled{d}}]a_r^{\tiny\textcircled{d}}[aa_r^{\tiny\textcircled{d}}]a^{m+1}x\\
&=&[a_r^daa_r^{\tiny\textcircled{d}}]^{m}[a_r^da^2a_r^{\tiny\textcircled{d}}]a_r^{\tiny\textcircled{d}}[aa_r^{\tiny\textcircled{d}}]^*a^{m+1}x\\
&=&[a_r^daa_r^{\tiny\textcircled{d}}]^{m}[a_r^da^2a_r^{\tiny\textcircled{d}}]a_r^{\tiny\textcircled{d}}\big([aa_r^d]^*a^{m+1}x\big)\\
&=&[a_r^daa_r^{\tiny\textcircled{d}}]^{m}[a_r^da^2a_r^{\tiny\textcircled{d}}]a_r^{\tiny\textcircled{d}}\big([aa_r^d]^*a^{m}b\big)\\
&=&[a_r^daa_r^{\tiny\textcircled{d}}]^{m}a_r^da^2(a_r^{\tiny\textcircled{d}})^2[aa_r^d]^*a^{m}b\\
&=&[a_r^daa_r^{\tiny\textcircled{d}}]^{m}a_r^d(aa_r^{\tiny\textcircled{d}})^*[aa_r^d]^*a^{m}b\\
&=&[a_r^daa_r^{\tiny\textcircled{d}}]^{m}a_r^d(aa_r^daa_r^{\tiny\textcircled{d}})^*a^{m}b\\
&=&[a_r^daa_r^{\tiny\textcircled{d}}]^{m}a_r^d(aa_r^{\tiny\textcircled{d}})^*a^{m}b\\
&=&[a_r^daa_r^{\tiny\textcircled{d}}]^{m}a_r^d(aa_r^{\tiny\textcircled{d}})a^{m}b\\
&=&[a_r^daa_r^{\tiny\textcircled{d}}]^{m+1}a^{m}b=a^{\tiny\textcircled{g}_m}b.
\end{array}$$
Therefore $x=a^{\tiny\textcircled{g}_m}b+[1-a^{\tiny\textcircled{g}_m}a]x,$ as required.\end{proof}

\begin{cor} Let $a\in \mathcal{A}_r^{\tiny\textcircled{d}}$. If $x$ is the solution of Eq. (4.1) in $im\big(a_r^{\tiny\textcircled{g}_m}\big)$, then $x=a_r^{\tiny\textcircled{g}_m}b.$
\end{cor}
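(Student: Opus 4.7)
The plan is to combine the general solution formula from Theorem 4.14 with the idempotency property of $a_r^{\tiny\textcircled{g}_m}a$ recorded in Corollary 3.4(1). Write $a^\sharp := a_r^{\tiny\textcircled{g}_m}$ for brevity throughout the sketch.

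First I would recall that, as established in the second step of the proof of Theorem 4.14, every solution $x$ of Eq.~(4.1) satisfies the identity
\[
a^\sharp a x = a^\sharp b.
\]
This identity is the real content of Theorem 4.14 and holds for every $x$ with $[aa_r^d]^*a^{m+1}x = [aa_r^d]^*a^m b$, without any assumption on where $x$ lives.

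Next I would use the hypothesis $x\in \mathrm{im}(a^\sharp)$ to write $x = a^\sharp z$ for some $z\in\mathcal{A}$. By Corollary 3.4(1) we have $a^\sharp = a^\sharp a a^\sharp$, and hence
\[
a^\sharp a x \;=\; a^\sharp a (a^\sharp z) \;=\; (a^\sharp a a^\sharp) z \;=\; a^\sharp z \;=\; x.
\]
Combining this with the displayed identity above yields $x = a^\sharp a x = a^\sharp b$, which is the desired conclusion.

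I do not foresee a serious obstacle: the statement is essentially the uniqueness part of Theorem 4.14 once we restrict to the range of $a^\sharp$, and the mild point to verify is that the map $y\mapsto a^\sharp a y$ acts as the identity on $\mathrm{im}(a^\sharp)$, which is exactly what Corollary 3.4(1) provides. Alternatively, one could invoke Theorem 4.14 directly: the general solution is $x = a^\sharp b + (1 - a^\sharp a)y$, and the requirement $x\in\mathrm{im}(a^\sharp)$ forces $(1 - a^\sharp a)y \in \mathrm{im}(a^\sharp)$; applying $a^\sharp a$ and using $(a^\sharp a)^2 = a^\sharp a$ (again Corollary 3.4(1)) shows $(1 - a^\sharp a)y = 0$ modulo its image component, and hence $x = a^\sharp b$.
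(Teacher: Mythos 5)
Your argument is correct, and it takes a genuinely different (and shorter) route than the paper. The paper proves the corollary as a uniqueness statement: it takes two solutions $x_1,x_2$ with range in $im\big(a_r^{\tiny\textcircled{g}_m}\big)$, subtracts Eq.~(4.1) to get $[aa_r^d]^*a^{m+1}(x_1-x_2)=0$, and then uses the commutation $[aa_r^d]a^{m+1}=a^{m+1}[aa_r^d]$ together with the \emph{properness of the involution} to deduce $a^{m+1}(x_1-x_2)=0$, hence $a_r^{\tiny\textcircled{g}_m}a(x_1-x_2)=(a_r^daa_r^{\tiny\textcircled{d}})^{m+1}a^{m+1}(x_1-x_2)=0$, and finally $x_1-x_2=0$ because $a_r^{\tiny\textcircled{g}_m}a$ fixes $im\big(a_r^{\tiny\textcircled{g}_m}\big)$. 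You instead reuse the identity $a_r^{\tiny\textcircled{g}_m}ax=a_r^{\tiny\textcircled{g}_m}b$, which Step 2 of the proof of Theorem 4.14 derives for \emph{every} solution of Eq.~(4.1) by a direct computation, and then observe that $y\mapsto a_r^{\tiny\textcircled{g}_m}ay$ is the identity on $im\big(a_r^{\tiny\textcircled{g}_m}\big)$ by Corollary 3.4(1); this gives $x=a_r^{\tiny\textcircled{g}_m}ax=a_r^{\tiny\textcircled{g}_m}b$ at once. Your route buys economy: it avoids re-invoking the properness of the involution and the cancellation chain, at the cost of explicitly leaning on an intermediate identity buried inside the proof of Theorem 4.14 rather than on its stated conclusion. (The final endmatter of your proposal, where you argue via the general-solution formula that $a_r^{\tiny\textcircled{g}_m}a(1-a_r^{\tiny\textcircled{g}_m}a)y=0$, is essentially the same computation and is also fine; the phrase ``modulo its image component'' is looser than needed, since applying $a_r^{\tiny\textcircled{g}_m}a$ to $x$ and using $a_r^{\tiny\textcircled{g}_m}aa_r^{\tiny\textcircled{g}_m}=a_r^{\tiny\textcircled{g}_m}$ already kills that term exactly.)
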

\begin{proof} By virtue of Theorem 4.14, $a^{\tiny\textcircled{g}_m}b$ is a solution of Eq. (4.1) in $im\big(a_r^{\tiny\textcircled{g}_m}\big)$.
Let $x_1,x_2\in \mathcal{A}$ be the solutions of Eq. (4.1) and satisfy $im(x_i)\subseteq im\big(a_r^{\tiny\textcircled{g}_m}\big)$. Write $x_1=a_r^{\tiny\textcircled{g}_m}y_1$ and $x_2=a_r^{\tiny\textcircled{g}_m}y_2$. Then
$x_1-x_2=a_r^{\tiny\textcircled{g}_m}(y_1-y_2)$; hence, $im(x_1-x_2)\subseteq im\big(a_r^{\tiny\textcircled{g}_m}\big)$. By hypothesis, we have
$$[aa_r^d]^*a^{m+1}x_i=[aa_r^d]^*a^{m}b$$ for $i=1,2$. Hence $[aa_r^d]^*a^{m+1}(x_1-x_2)=0$.
Since $aa_r^da=a^2a_r^d$, by induction, we have $[aa_r^d]a^{m+1}=a^{m+1}[aa_r^d]$. Hence $$[aa_r^d]^*[aa_r^d]a^{m+1}(x_1-x_2)=[aa_r^d]^*a^{m+1}[aa_r^d](x_1-x_2)=0.$$
Since the involution is proper, we have
$[aa_r^d]a^{m+1}(x_1-x_2)=0$, and then $a^{m+1}(x_1-x_2)=a^{m+1}aa_r^d(x_1-x_2)=0$. In light of Theorem 4.1,
$$a_r^{\tiny\textcircled{g}_m}a(x_1-x_2)=(a_r^daa_r^{\tiny\textcircled{d}})^{m+1}a^{m+1}(x_1-x_2)=0.$$
By hypothesis, we may write $x_1-x_2=a_r^{\tiny\textcircled{g}_m}z$ for some $z\in \mathcal{A}$. Then
$$x_1-x_2=a_r^{\tiny\textcircled{g}_m}a[a_r^{\tiny\textcircled{g}_m}z]=a_r^{\tiny\textcircled{g}_m}a(x_1-x_2)=0.$$
Accordingly, $x=a^{\tiny\textcircled{g}_m}b$ is the unique solution of Eq. (4.1) in $im\big(a_r^{\tiny\textcircled{g}_m}\big)$.\end{proof}

{\bf Conflict of interest}

No potential conflict of interest was reported by the authors.\\

{\bf Data Availability Statement}

No/Not applicable (this manuscript does not report data generation or analysis).

\vskip10mm

\end{document}